\documentclass[11pt]{amsart}
\usepackage{graphicx}  %插入图片的宏包
\usepackage{subfig}  %插入多图时用子图显示的宏包
\usepackage{amssymb, amsmath, amsthm}
\usepackage[colorlinks=true,linkcolor=blue,citecolor=red]{hyperref}
\hypersetup{hypertexnames=false}
\usepackage{color}
\usepackage{float}
\usepackage{enumerate}
\usepackage{graphics}
\usepackage{epstopdf}
\usepackage{tikz}
\usetikzlibrary{arrows,decorations.markings}
\usepackage{cite}
\usepackage{epsfig}
\usepackage{amsmath}
\usepackage{amssymb}
\usepackage{amscd}
\usepackage{graphicx}
\usepackage{mathrsfs}
\usepackage{needspace}
\usepackage[T1]{fontenc}
\usepackage{bm,bbm}
\hypersetup{hypertexnames=false}

\makeatletter
\newcommand{\equationlabels}[1]{%
  \begingroup
  \edef\@currentlabel{\theequation}%
  \edef\@currentHref{equation.\theHequation}%
  #1%
  \endgroup}
\makeatother
\numberwithin{equation}{section}
\newtheorem{theorem}{Theorem}[section]
\newtheorem{corollary}[theorem]{Corollary}
\newtheorem{lemma}[theorem]{Lemma}
\newtheorem{proposition}[theorem]{Proposition}

\newtheorem{dbar-RHP}[theorem]{$\bar{\partial}$-RH problem}
\theoremstyle{definition}
\newtheorem{definition}[theorem]{Definition}
\newtheorem{remark}[theorem]{Remark}

\newcommand{\R}{\mathbb{R}}
\newcommand{\C}{\mathbb{C}}
\newcommand{\N}{\mathbb{N}}

\newcommand{\uu}{\mathbf{u}}
\newcommand{\Ub}{\mathbf{U}}
\newcommand{\Rb}{\mathbf{R}}
\newcommand{\Ss}{\mathbb{S}}
\newcommand{\sv}{\mathbf{s}}

\newcommand{\Qv}{\mathbf{Q}}
\newcommand{\qv}{\mathbf{q}}
\newcommand{\ov}{\overline}
\newcommand{\ran}{\mathrm{ran}}
\renewcommand{\ker}{\mathrm{ker}}
\newcommand{\dom}{\mathrm{dom}}
\newcommand{\rank}{\mathrm{rank}}
\newcommand{\ii}{\mathrm{i}}
\newcommand{\eu}{\mathrm{e}}
\newcommand{\Gr}{\mathsf{Gr}}
\newcommand{\id}{\mathrm{Id}}
\newcommand{\Tr}{\mathrm{Tr}}

\newcommand{\Hfr}{\mathfrak{H}}

\subjclass[2020]{37K10, 35R11, 35Q51, 47B35}

\keywords{Integrable system, Half-wave maps equation, Spectral theory, Well-posedness, Long-time behavior, Soliton}

\begin{document}

\title[Sharp well-posedness and soliton resolution]{Sharp well-posedness and soliton resolution for the half-wave maps equation in $H^{1/2}_\bullet$}

%\today

\author[Tang]{Fei Tang}
\author[Tian]{Shou-Fu Tian$^{*,\dagger}$}
\address{Fei Tang  \newline
School of Mathematics, China University of Mining and Technology, Xuzhou 221116, People's Republic of China}
\email{ftang@cumt.edu.cn}
\address{Shou-Fu Tian (Corresponding author)\newline
School of Mathematics, China University of Mining and Technology, Xuzhou 221116, People's Republic of China}
\email{sftian@cumt.edu.cn}

\thanks{*Corresponding author (sftian@cumt.edu.cn).}
\thanks{$^\dagger$Contributed equally as the first author.}

\begin{abstract}
  We prove sharp well-posedness and soliton resolution for the half-wave maps equation on the real line $\R$ in \(H^{1/2}_\bullet\). This regularity threshold for global well-posedness is optimal, as we show by constructing examples of ill-posedness in \(H^s_\bullet\) for any \(s<1/2\). Moreover, with the spectral assumption satisfied by a dense set of initial data in the energy space, we prove that solutions in \(H^{1/2}_\bullet\) asymptotically decouple as an infinite superposition of solitons. Our results extend the known global well-posedness and finite soliton resolution for rational data in \(H^\infty_\bullet\) to sharp global well-posedness and soliton resolution for general data in \(H^{1/2}_\bullet\).

  Different from rational solutions, general data can also involve an infinite discrete spectrum, which makes the reduction analysis framework on the finite rank no longer applicable. For sharp well-posedness, we present a new isometry criterion and stability results for resolvent transform in the explicit formula. For soliton resolution, we develop a spectral method for this equation based on direct resolvent analysis and a novel trace norm estimate that upgrades energy convergence to uniform convergence. Our argument also yields sharp well-posedness and soliton resolution for the generalized half-wave maps equation with targets in the complex Grassmannians, extending the classical sphere-valued case.
\end{abstract}

	\maketitle

\section{Introduction}\label{sec.1}
\indent We investigate the Cauchy problem of the half-wave maps equation on the real line
\begin{align}\label{HWMv}\tag{HWM}
	\partial_t \uu = \uu \times |D| \uu,\enspace \uu:[0,T)\times\R \to\Ss^2.
\end{align}
Here $\mathbb{S}^2$ is the standard unit sphere in $\mathbb{R}^3$, and $\times$ in the equation is the cross product. The operator $|D| = \mathcal{H}\partial_x$ has the Fourier multiplier $|\xi|$, where the Hilbert transform $\mathcal{H}$ is given by
\begin{align*}
	(\mathcal{H}f)(x)=\frac{1}{\pi}\mathrm{p.v.}\int_{\R}\frac{f(y)}{x-y}dy.
\end{align*}
From a physical perspective, \eqref{HWMv} corresponds to a continuous version of the discrete completely integrable spin Calogero--Moser model \cite{lenzmann2024derivation, zhou2015solitons}. As an infinite-dimensional Hamiltonian system, \eqref{HWMv} possesses the energy functional \cite{lenzmann2018on,zhou2015solitons}
\begin{align*}
	E(\uu)=\frac{1}{2}\int_{\R}\uu\cdot|D|\uu dx=\frac{1}{4\pi}\int_{\R}\int_{\R}\frac{|\uu(x)-\uu(y)|^2}{|x-y|^2}dxdy,
\end{align*}
which is formally conserved along the flow of the equation. For any $\lambda>0$, the scaling $\uu(t,x)\mapsto\uu(\lambda t,\lambda x)$ leaves the equation \eqref{HWMv} and its energy invariant. Therefore, \eqref{HWMv} is energy-critical. Moreover, this equation is completely integrable and admits a Lax pair \cite{gerard2018a}. For a matrix-valued symbol ${A}$, let the Toeplitz operator
\begin{align*}
	T_{A}:L^2_+(\R; \C^2)\to L^2_+(\R; \C^2),\enspace f\mapsto T_{A}f:=\Pi_{+}({A} f),
\end{align*}
where $\Pi_+$ denotes the Cauchy--Szeg{\H{o}} projection onto $L^2_+$. Then for sufficiently regular solutions, the following Lax equation holds.
\begin{align*}
\frac{d}{dt}T_{\Ub(t)}&=[B_{\Ub(t)}^+,T_{\Ub(t)}],\enspace \Ub(t)=\uu(t)\cdot\bm\sigma,\enspace \bm\sigma=(\sigma_1,\sigma_2,\sigma_3),
\end{align*}
where $\sigma_j$ are the standard Pauli matrices, $[A,B]=AB-BA$, and
\begin{align*}
B_\Ub^+=-\frac{\ii}{2}\left(T_\Ub D+DT_\Ub\right)+\frac{\ii}{2}T_{|D|\Ub},\enspace D=-\ii\partial_x.
\end{align*}

Lenzmann and Schikorra \cite{lenzmann2018on} completely classified the finite-energy solitary wave solutions of \eqref{HWMv}. According to that work, \eqref{HWMv} admits a finite-energy solitary wave $\qv_v$ with a non-constant profile propagating at speed $v$ if and only if $v\in(-1,1)$. In addition, these solitary waves are all represented by finite Blaschke products, thus forming rational functions. If the degree of the corresponding Blaschke product is $m$, then the energy of $\qv_v$ is \cite{lenzmann2018on}
\begin{align*}
	E(\qv_v)=\left(1-v^2\right)\cdot m\pi.
\end{align*}
In particular, $m=1$ corresponds to the non-trivial solitary wave with the lowest energy, which is called the ground state solitary wave and has the form \cite{gerard2025global,lenzmann2018on}
\begin{align}\label{solitonv}
	\uu_v(t,x) = \qv_v(x-vt),
\end{align}
\begin{align*}
	\qv_{v}(x) =  \qv_\infty + \frac{\sv}{x- z} + \frac{\overline{\sv}}{x-\overline{z}},
\end{align*}
where $\qv_\infty\in \mathbb{S}^2$, $z\in \C_-$, and $\sv\in\C^3\setminus\{0\}$ satisfy
\begin{align*}
	\sv \cdot \sv = 0 \enspace \mbox{and} \enspace \sv \cdot \left ( \qv_\infty + \frac{\ov{\sv}}{z-\ov{z}} \right ) = 0.
\end{align*}
Here $\mathbf{a} \cdot \mathbf{b} = \sum_{k=1}^3 a_k b_k$ denotes the non-Hermitian dot product of $\mathbf{a}, \mathbf{b} \in \C^3$. Furthermore, Berntson, Klabbers, and Langmann \cite{berntson2020multi} obtained multi-solitons for \eqref{HWMv} by the spin Calogero-Moser system with certain constraints on initial conditions. Matsuno \cite{matsuno2022integrability} established analytic expressions for the multi-soliton solutions, and then showed that no phase shifts appear after the collision of solitons through asymptotic analysis.

For the equation \eqref{HWMv} on the real line, local well-posedness is known in the affine Sobolev space $H^s_\bullet$ for $s>3/2$ \cite{gerard2025global}. Recently, G\'{e}rard and Lenzmann \cite{gerard2026the} established global well-posedness in $H^{1/2}$ for the half-wave maps equation on the torus. In higher dimensions, small-data global existence for the half-wave maps equation on $\R^N$ with $N\geq3$ has been studied in \cite{kiesenhofer2021small,krieger2018small,liu2021global}. We now turn to rational maps, which occupy a special place in the dynamics of \eqref{HWMv} on the real line. For the set of rational functions
\begin{align*}
	\mathcal{R}at(\R;\mathbb{S}^2):=\left\lbrace \uu:\R\to\mathbb{S}^2\;|\;\uu(x)\; \text{is rational in}\; x\in\R \right\rbrace,
\end{align*}
it can be verified that for any $\uu\in\mathcal{R}at(\R;\mathbb{S}^2)$, there exists $\mathbf{p}\in\mathbb{S}^2$ such that
\begin{align*}
	\lim_{x\to\pm\infty}\uu(x)=\mathbf{p}.
\end{align*}
Furthermore, it can be verified that $\mathcal{R}at(\R;\mathbb{S}^2)\subset H^\infty_\bullet(\R;\mathbb{S}^2)$. It is worth noting that $\mathcal{R}at(\R;\mathbb{S}^2)$ is dense in $\dot{H}^{1/2}(\R;\mathbb{S}^2)$ \cite{gerard2025global}. According to the Lax pair and Kronecker's theorem, the flow of \eqref{HWMv} preserves rationality \cite{gerard2018a}. The global well-posedness of \eqref{HWMv} with rational initial data has been given by the following proposition:
\begin{proposition}{[G\'{e}rard and Lenzmann \cite{gerard2025global}]}\label{qjsdx}
	For any $\uu_0\in\mathcal{R}at(\R;\Ss^2)$, there exists a unique $\uu\in C(\R;H^\infty_\bullet(\R;\Ss^2))$ as a global solution of \eqref{HWMv} satisfying $\uu(0)=\uu_0$. Moreover, this solution has the form
	\begin{align*}
		\uu(t) = \uu_\infty + \mathbf{v}(t) \in \Ss^2 + C(\R; H^\infty(\R; \R^3)),
	\end{align*}
	where $\uu_\infty = \lim_{|x| \to \infty} \uu_0(x) \in \Ss^2$.
\end{proposition}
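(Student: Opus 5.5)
The plan is to combine local well-posedness in $H^s_\bullet$ for $s>3/2$ \cite{gerard2025global} with a priori control coming from the Lax structure and the preservation of rationality, and to continue the solution to all times by a blow-up alternative.

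First, for $\uu_0\in\mathcal{R}at(\R;\Ss^2)$ we write $\uu_0=\uu_\infty+\mathbf{v}_0$ with $\mathbf{v}_0\in H^\infty$. Local well-posedness then gives a unique maximal solution $\uu\in C((T_-,T_+);H^\infty_\bullet)$. The blow-up alternative says that if $T_+<\infty$, then $\|\mathbf{v}(t)\|_{H^s}\to\infty$ for some fixed $s>3/2$. Along the flow, the Lax equation $\frac{d}{dt}T_{\Ub}=[B^+_{\Ub},T_{\Ub}]$ holds, so $T_{\Ub(t)}=W(t)T_{\Ub_0}W(t)^*$ with $W$ unitary solving $W'=B^+_{\Ub}W$. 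Kronecker's theorem, applied to the Hankel part $H_{\Ub}$, which is intertwined in the same way, shows that $\Ub(t)$ stays rational with bounded degree $N$. Its poles in $\C_-$ are given by the spectrum of a finite-rank restriction of the shift generator $X$ acting on $\ran H_{\Ub(t)}$.

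Second, I derive an explicit formula of G\'erard type. Conjugating $e^{-\ii tX}$-like flows, I expect
\[
\Pi_+\Ub(t)(x)=\Pi_+\Ub_\infty+\frac{1}{2\pi\ii}I_+\bigl[(X+tL_{\Ub_0}-x)^{-1}\Pi_+\Ub_0\bigr],
\]
restricted to the finite-dimensional invariant space. The main obstacle is showing that the poles, that is, the eigenvalues of the $N\times N$ matrix $X+tL$, stay in the open lower half-plane for all $t$ and never reach $\R$. I would try to establish this through the identity $\mathrm{Im}\langle (X+tL)f,f\rangle<0$ on this space, using that $\mathrm{Im}\,X$ is a negative rank-one-type operator and that $L$ is self-adjoint. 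This yields a quantitative lower bound on the distance of the poles to $\R$ on every compact time interval.

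Finally, since the coefficients of the rational function depend continuously (indeed polynomially) on $t$ through this matrix, and the poles stay a definite distance from $\R$ on bounded intervals, every $H^s$ norm of $\mathbf{v}(t)$ is bounded on bounded time intervals. This contradicts the blow-up alternative, so $T_\pm=\pm\infty$. Conservation of the limit $\uu_\infty$ follows because a rational function with no poles on $\R$ and bounded degree has equal limits at $\pm\infty$, and $\mathbf{v}(t)\in H^\infty$ forces that limit to be continuous in time, hence constant. Uniqueness is inherited from the local theory.
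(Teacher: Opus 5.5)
The paper does not prove this proposition; it quotes it from G\'erard--Lenzmann and describes their argument only as the explicit formula combined with rationality and algebraic constraints. Your architecture (local theory in $H^s_\bullet$ for $s>3/2$, a blow-up alternative, the explicit formula reduced to a finite matrix, control of that matrix's spectrum) is in that spirit. However, the step you single out as the main obstacle fails as you propose to do it.

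First, the set-up. The relevant space is the fixed space $\Hfr_1=\ran K_{\Ub_0}$, which is invariant under $X^*$ and $T_{\Ub_0}$ and contains the columns of $\Pi_+\mathbf V_0$. The matrix is $M(t)=(X^*+tT_{\Ub_0})|_{\Hfr_1}$, acting on $\Pi_+\mathbf V_0$ as in Proposition \ref{local explicit}, not $X+tL_{\Ub_0}$ acting on $\Pi_+\Ub_0$. By \eqref{graph and X} and self-adjointness of $T_{\Ub_0}$,
\[
\operatorname{Im}\langle M(t)f,f\rangle_{L^2}=-\frac{1}{4\pi}|I_+(f)|_{\mathcal V}^2,\qquad f\in\Hfr_1,
\]
and this is independent of $t$. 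The form is only negative semidefinite. It vanishes on $\Hfr_1\cap\ker I_+$, which has codimension at most $d$ in $\Hfr_1\subset L^2_+(\R;\C^d)$. That subspace is nontrivial as soon as $\dim\Hfr_1>d$, which is the generic situation. Precisely because $\operatorname{Im}X^*$ has rank at most $d$ (your ``rank-one-type'' remark), there is no strict numerical-range bound. You only get $\sigma(M(t))\subset\overline{\C_-}$, with no quantitative distance from $\R$. A real eigenvalue of $M(T_+)$, with eigenvector in $\ker I_+$, is exactly the only way the $H^s$ norms could blow up at $T_+$, and your argument does not exclude it.

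The missing idea is a spectral argument that rules out real eigenvalues for every $t\in\R$.

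1. Suppose $M(t)f=\lambda f$ with $\lambda\in\R$ and $f\neq0$. The identity above gives $I_+(f)=0$.
2. The commutator identity \eqref{pre wu}, $[T_{\Ub_0},X^*]f=-\frac{\ii}{2\pi}\Pi_+\mathbf V_0\,I_+(f)$, then gives $[T_{\Ub_0},X^*]f=0$.
3. Hence $M(t)T_{\Ub_0}f=T_{\Ub_0}M(t)f=\lambda T_{\Ub_0}f$, so $\ker(M(t)-\lambda)$ is invariant under the self-adjoint $T_{\Ub_0}$.
4. This finite-dimensional eigenspace therefore contains a common eigenvector $g\neq0$ with $T_{\Ub_0}g=\nu g$, $\nu\in\R$. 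Then $X^*g=(\lambda-t\nu)g$ with a real eigenvalue.
5. On $\R_+$ this means $\widehat g(\xi)=\widehat g(0^+)\eu^{-\ii(\lambda-t\nu)\xi}$, which is impossible for $0\neq g\in L^2_+$.

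This is the finite-dimensional counterpart of the triviality of $\ker\mathcal O_{tT}$ in Lemma \ref{OtTisometry}. Only after this does continuity of the eigenvalues of $M(t)$, whose entries are affine in $t$, give a uniform distance of the poles from $\R$ on $[0,T_+]$, which your final step needs.

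A smaller correction: the poles of $\Pi_+\mathbf V(t)$ are governed by $X^*$ on $\ran H^*_{\Ub(t)}\subset L^2_+$, not by $X$ on $\ran H_{\Ub(t)}$. This time-dependent description, and the appeal to Kronecker's theorem, become unnecessary once you work on the fixed space $\Hfr_1$.
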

\noindent Note that the infinite conservation laws generated by the Lax pair for \eqref{HWMv} can only control one family of homogeneous Besov norms but is insufficient to control the Sobolev norms when $s>1/2$. G\'{e}rard and Lenzmann overcame this difficulty by establishing an explicit formula and combining it with algebraic constraints and the rationality of the initial data. On this basis, they further proved the soliton decomposition and non-turbulence of rational function solutions.
\begin{proposition}{[G\'{e}rard and Lenzmann \cite{gerard2025global}]}\label{ylczgzfj}
	Let
\begin{align*}
	\uu_0\in\mathcal{R}at(\R;\Ss^2),\enspace \Ub_0:=\uu_0\cdot\bm\sigma,\enspace \bm\sigma=(\sigma_1,\sigma_2,\sigma_3),
\end{align*}
and the Toeplitz operator
\begin{align*}
	T_{\Ub_0}:L^2_+(\R; \C^2)\to L^2_+(\R; \C^2),\enspace f\mapsto T_{\Ub_0}f:=\Pi_{+}(\Ub_0 f).
\end{align*}
Suppose that $T_{\Ub_0}$ has a simple discrete spectrum $\{ v_1, \ldots, v_N \}$. Then there exist ground state solitary wave profiles $\{ \qv_{v_1}, \ldots, \qv_{v_N}\}$ such that the global solution $\uu \in C(\R; H^\infty_\bullet(\R; \Ss^2))$ of \eqref{HWMv} with $\uu(0)=\uu_0$ satisfies
	\begin{align*}
		\lim_{t \to \pm \infty} \left\| \uu(t) - \left(\sum_{j=1}^N \qv_{v_j}(x-v_jt) - (N-1) \uu_\infty\right)  \right\|_{\dot{H}^s} = 0, \enspace \forall\;s>0,
	\end{align*}
	where $\uu_\infty = \lim_{|x| \to \infty} \uu_0(x) \in \Ss^2$. Moreover, the solution $\uu$ has the following a priori bound:
	\begin{align*}
		\sup_{t \in \R} \| \uu(t) \|_{\dot{H}^s} \leq C(\uu_0,s), \enspace \forall\; s>0.
	\end{align*}
\end{proposition}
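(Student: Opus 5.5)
The plan is to follow the explicit-formula route of G\'{e}rard and Lenzmann. Global existence and uniqueness come from Proposition~\ref{qjsdx}, so the task is to read off the large-time behaviour from the Lax structure.

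\textbf{Step 1: explicit formula.} Set $\Ub(t)=\uu(t)\cdot\bm\sigma$ and $\Ub_\infty=\uu_\infty\cdot\bm\sigma$. Since $T_{\Ub(t)}$ evolves by the Lax equation $\frac{d}{dt}T_{\Ub}=[B^+_{\Ub},T_{\Ub}]$, it stays unitarily equivalent to $T_{\Ub_0}$. Combining this with the commutation relation between $T_{\Ub}$ and the generator $X^*$ of the adjoint shift semigroup, I would derive the formula
\[
\Pi_+(\Ub(t)-\Ub_\infty)(z)=\frac{1}{2\pi\ii}\,I_+\!\left[(X^*+tT_{\Ub_0}-z)^{-1}\Pi_+(\Ub_0-\Ub_\infty)\right],\qquad z\in\C_+ .
\]
The components of $\uu$ are then recovered from $\Ub=\Pi_+\Ub+(\Pi_+\Ub)^*$.

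\textbf{Step 2: finite-dimensional reduction.} For rational data, Kronecker's theorem shows that $\Pi_+(\Ub_0-\Ub_\infty)$ lies in a finite-dimensional subspace $\mathcal{V}$ that is invariant under $X^*$ and is mapped by $T_{\Ub_0}$ into $\mathcal{V}\oplus\mathrm{span}$ of constants. This reduces the resolvent in the explicit formula to a finite matrix $(A+tV-z)^{-1}$. Here $V$ is the compression of $T_{\Ub_0}$, and its spectrum is the discrete spectrum $\{v_1,\dots,v_N\}$ of $T_{\Ub_0}$. Because the spectrum is simple, $V$ is diagonalizable: $V=\sum_j v_jP_j$ with rank-one spectral projections $P_j$ (using the algebraic constraint $\Ub^2=I$).

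\textbf{Step 3: asymptotics.} I work in the moving frame $z=x+v_jt$ and conjugate by the eigenbasis of $V$. The matrix then reads $\mathrm{diag}(t(v_k-v_j))+(A'-x)$. Applying a Schur complement as $|t|\to\infty$, the blocks with $k\neq j$ decouple with size $O(|t|^{-1})$. The $j$-th block gives exactly a degree-one Blaschke-type rational profile of the form $\qv_\infty+\sv/(x-z)+\ov{\sv}/(x-\ov z)$. Summing over $j$ and subtracting the $(N-1)$ redundant copies of $\uu_\infty$ yields the stated decomposition. The error terms are rational functions with poles uniformly in $\C_-$, bounded away from $\R$, and they decay like $O(|t|^{-1})$. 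This gives convergence in every $\dot H^s$, $s>0$.

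\textbf{Step 4: a priori bound.} The same representation gives poles of $\Pi_+\uu(t)$ that stay uniformly away from the real line for all $t$. Near finite times this follows by continuity; for large $|t|$ it follows from Step~3. Together with a uniform bound on the degree, this yields $\sup_t\|\uu(t)\|_{\dot H^s}\le C$.

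The main obstacle is Step~3. One must show that the poles of the reduced resolvent cannot approach $\R$: invertibility of $A+tV-z$ for $\operatorname{Im} z\ge0$ must hold uniformly in $t$. One must also identify each limiting block as a genuine ground state satisfying the constraints $\sv\cdot\sv=0$ and $\sv\cdot(\qv_\infty+\ov{\sv}/(z-\ov z))=0$. For uniform invertibility I would use that $\operatorname{Im}\langle X^*f,f\rangle\geq0$ (accretivity) combined with $\operatorname{Im}z>0$, and then pass to $\operatorname{Im}z=0$ through the simplicity of the eigenvalues. The constraints come from $\Ub^2=I$ holding in the limit.
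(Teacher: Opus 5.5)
The paper does not prove this proposition. It quotes it from \cite{gerard2025global} and describes that proof only as the explicit formula reduced, via the finite rank of $H_{\Ub_0}$, to a finite-dimensional problem, plus algebraic constraints. Your outline follows that route; its infinite-rank analogue in the paper is Lemmas \ref{tozero}--\ref{is soliton}. However, the step you yourself flag as the main obstacle is left open, and the mechanism you propose cannot close it.

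In the paper's convention, $\operatorname{Im}\langle X^*f,f\rangle_{L^2}=-\frac{1}{4\pi}|I_+(f)|_{\mathcal V}^2\le0$ (not $\ge0$). Combined with $\operatorname{Im}z>0$, this only yields $\|(X^*+tT_{\Ub_0}-z\id)^{-1}\|_{\mathrm{op}}\le(\operatorname{Im}z)^{-1}$. That bound degenerates as $\operatorname{Im}z\to0$, so it says nothing about poles approaching $\R$.

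What actually decides the matter is first-order perturbation theory for $T_{\Ub_0}+t^{-1}X^*$ on the reduced space. For large $|t|$ the poles of $\Pi_+\mathbf V(t)$ are $z_j(t)=tv_j+\langle X^*\varphi_j,\varphi_j\rangle_{L^2}+O(|t|^{-1})$, and their distance to $\R$ tends to $|I_+(\varphi_j)|_{\mathcal V}^2/(4\pi)$. Everything therefore hinges on $I_+(\varphi_j)\neq0$; this is \cite[Lemma 7.1]{gerard2025global}, invoked in Lemma \ref{PiVlim}. It is exactly where simplicity enters, which you mention but do not use. If $I_+(\varphi_j)=0$, then \eqref{pre wu} gives $(T_{\Ub_0}-v_j\id)X^*\varphi_j=0$, so $X^*\varphi_j=\mu\varphi_j$ by simplicity, whence $\widehat{\varphi_j}(\xi)=\widehat{\varphi_j}(0^+)\eu^{-\ii\mu\xi}=0$.

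For bounded $t$, a similar argument on $\ker(X^*+tT_{\Ub_0}-r\id)$, $r\in\R$, keeps every pole strictly in $\C_-$, and continuity in $t$ does the rest. Global-in-$x$ convergence in $\dot H^s$ should then be read off from the partial-fraction form $\Pi_+\mathbf V(t,z)=\sum_k B_k(t)/(z-z_k(t))$, with $B_k(t)\to A_k^\infty$ and $z_k(t)-tv_k\to y_k^\infty-\ii\delta_k^\infty$. A Schur complement in one moving frame only controls bounded $x$ in that frame.

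Two further repairs are needed. First, the reduction space must be exactly invariant under both $X^*$ and $T_{\Ub_0}$. Otherwise the resolvent does not reduce to $(A+tV-z)^{-1}$ and the compression need not have spectrum $\{v_j\}$, so a space mapped by $T_{\Ub_0}$ into itself ``plus constants'' will not do. The right choice is $\Hfr_1=\ran K_{\Ub_0}$:
it is finite-dimensional by Kronecker, $T_{\Ub_0}$-invariant, and contains the columns of $\Pi_+\mathbf V_0$. It is also $X^*$-invariant, by $K_{\Ub_0}=\id-T_{\Ub_0}^2$ together with $[T_{\Ub_0},X^*]f=-\frac{\ii}{2\pi}\Pi_+\mathbf V_0\,I_+(f)$. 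Since $T_{\Ub_0}|_{\Hfr_1}$ is self-adjoint, diagonalizability is automatic; simplicity is what validates the expansion above.

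Second, extracting the constraints from $\Ub^2=\mathbbm 1_2$ in the limit is a legitimate alternative to the paper's derivation from the eigenvalue equation (Corollary \ref{hds}, Lemma \ref{is soliton}). Since $\Ub(t,x+v_jt)\to\Qv_{v_j}(x)$ locally uniformly, uniqueness of partial fractions in $\Qv_{v_j}^2-\mathbbm 1_2=0$ gives $(A_j^\infty)^2=0$ and \eqref{AU UA}. It does not, however, exclude $A_j^\infty=0$, i.e.\ a constant profile, which is not a ground state. For that you need $A_j^\infty=h_j\eta_j^*$ with $h_j=I_+(\varphi_j)\neq0$ and $2\pi\ii\eta_j=\int_\R\mathbf V_0\varphi_j\,\mathrm dx=(v_j\id-\Ub_\infty)h_j\neq0$, as in Lemma \ref{is soliton}.

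Finally, Step 4 follows at once from Step 3 and continuity on compact time intervals. Your alternative of pole location plus a degree bound controls nothing without an amplitude bound such as $L^2$ conservation.
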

\noindent This result shows that when the discrete spectrum is simple, the rational solutions resolve into finitely many ground state solitary waves with no remaining radiative term in the homogeneous Sobolev spaces. A Kronecker-type theorem shows that the initial data is rational if and only if the discrete spectrum of the Toeplitz operator is finite \cite{gerard2025global}. It is therefore natural to ask whether an infinite discrete spectrum can produce a soliton resolution into infinitely many solitary waves for non-rational solutions. This is precisely one of the objectives of the present work: to investigate the long-time behavior of such solutions and their soliton resolution. Another, more fundamental objective is to establish global well-posedness for \eqref{HWMv} on the real line for general initial data. In fact, the flow associated with rational initial data is known to be globally defined \cite{gerard2025global}. However, the long-time analysis in the general setting considered here requires global well-posedness for general data. To the best of our knowledge, this problem has remained open.

For these two objectives, we first review several approaches to the long-time asymptotic analysis of integrable equations. Among the earliest developments, Manakov, Ablowitz, and Newell \cite{ablowitz1973the,manakov1974nonlinear} pioneered the use of the inverse scattering transform (IST) for this purpose. Deift and Zhou \cite{deift1993a} later developed the nonlinear steepest descent method for Riemann--Hilbert problems and applied it to determine the long-time behavior of solutions for the modified Korteweg--de Vries equation. Since then, this method has been refined and extended to a variety of important nonlinear integrable equations \cite{charlier2023good,chen2021soliton,cuccagna2016asymptotic,de2020modified,de2009long,deift2011long}. In particular, the second author, Tian, and his collaborators investigated the soliton resolution conjecture for the Wadati--Konno--Ichikawa equation \cite{li2022soliton,li2022on}, the spin-$1$ Gross--Pitaevskii equation \cite{tian2026on}, and the modified Camassa--Holm equation \cite{yang2026the}.

The Hilbert transform in \eqref{HWMv} places it among nonlocal integrable equations. Related nonlocal structures also occur in the cubic {Szeg{\H{o}}} \cite{gerard2023unbounded,gerard2024szego}, Benjamin--Ono (BO) \cite{gassot2026proof,gerard2026lectures}, and Calogero--Moser derivative nonlinear Schr{\"o}dinger equations \cite{frank2025jost,gerard2024calogero}. Fokas and Ablowitz \cite{fokas1983inverse} formulated an IST for the BO equation based on a nonlocal Riemann--Hilbert problem. Ablowitz, Fokas, and Anderson \cite{ablowitz1983direct} also present a direct linearizing transform for this equation. A complementary route to the analysis of nonlocal integrable equations is using explicit formulas. G\'{e}rard and Grellier \cite{gerard2015an} derived such a formula for the cubic {Szeg{\H{o}}} equation, and this approach has since been extended to several other integrable equations involving the Hilbert transform \cite{gerard2023an,gerard2025global,gerard2024calogero}. Explicit formulas have also been used to investigate well-posedness and long-time behaviors for nonlocal equations \cite{badreddine2024global,gassot2026infinite,gassot2026proof,gerard2025global,gerard2026the,gerard2024szego,killip2025scaling}. For other results on the well-posedness of nonlocal integrable equations, see, for example, \cite{burq2008on,gerard2010cubic,gerard2023sharp,killip2024sharp,molinet2008global,saut1979sur}. For \eqref{HWMv} on the real line, the global well-posedness and soliton resolution with rational data in \cite{gerard2025global} exploit the finite-rank property of the Hankel operator associated with rational data, which reduces the explicit formula to a finite dimensional problem. This reduction is unavailable for general data.

For the global well-posedness with general data, our approach extends the work \cite{gerard2026the} for \eqref{HWMv} on the circle. We approximate the initial data by rational maps, for which the corresponding global classical solutions are given by the explicit formula. To pass to the limit in the explicit formula, we study the operator defined by its resolvent representation. A semigroup argument first shows that this operator is contractive, and a resolvent argument then establishes its convergence under perturbations of the Toeplitz operator. Passing to the limit in the rational intertwining relation then yields an intertwining identity for the limiting flow. Interestingly, the latter forces the resulting operator to be an isometry, which recovers the \(L^2\) conservation law. Combined with the trace identity for the Hankel operator, this also yields conservation of the \(\dot H^{1/2}\) energy and preservation of the discrete spectrum. By the strong convergence established, the explicit formula defines global solutions for general \(H^{1/2}_\bullet\)-data. Based on this, we further establish the continuous dependence of the global solution on the initial data, thereby proving the global well-posedness in $H^{1/2}_\bullet$.

We also prove that our global well-posedness result is sharp: the Cauchy problem for \eqref{HWMv} is ill-posed in $H^s_\bullet$ for any $s<1/2$. To prove sharpness for \eqref{HWMv}, we construct a sequence of rational sphere-valued initial data converging in every $H^s_\bullet$, $s<1/2$, to a sphere-valued step function with two jump discontinuities. At each fixed nonzero time, the corresponding classical solutions converge weakly in $L^2$ to an explicit profile. We compute this profile by reducing the limiting matrix resolvent to the scalar resolvent appearing in G\'erard's explicit formula for the zero-dispersion limit of the Benjamin--Ono equation \cite{gerard2024zero}. The limiting profile is not sphere-valued, which rules out a continuous sphere-valued extension of the classical rational flow to $H^s_\bullet$ for any $s<1/2$.

For the soliton resolution with general data, our strategy is inspired by \cite{gassot2026infinite,gassot2026proof}. By determining the subspace in which the eigenfunctions lie, we both establish a distorted Plancherel formula and apply the explicit formula to construct weak asymptotic profiles. With identities arising from the Toeplitz eigenvalue equation, we recover the geometric constraints and identify these profiles as the ground state solitary waves. To upgrade the soliton resolution from $\dot{H}^{1/2}$ to $L^\infty$ without conservation laws to control higher-order Sobolev norms, we develop a new trace norm estimate for the Hankel operator associated with the asymptotic error through finite-rank spectral truncations. Combining Peller's theorem with the Besov embedding, our argument yields the resolution into countably many ground state solitary waves for general \(H^{1/2}_\bullet\)-data under several assumptions: the discrete spectrum of the corresponding Toeplitz operator is required to be countable, simple, and satisfy a spectral summability condition. We call data meeting these requirements the admissible non-rational data.

To prove that the admissible non-rational data not only exist but also form a dense subset of the energy space $\dot{H}^{1/2}$, we recall that the rational data whose associated Toeplitz operators have finite simple discrete spectrum are dense in $\dot{H}^{1/2}$ \cite{gerard2025global}. More directly for our purpose, rational data for which the square of the corresponding Toeplitz operator has finite simple discrete spectrum also form a dense subset. It therefore suffices to find an admissible non-rational data in every neighborhood of each rational data in this class. More precisely, near every such rational data, we first construct another rational data in the same class for which the squared Toeplitz operator has one additional discrete spectrum point. The perturbation is controlled both in Sobolev norms and in the trace norm of the corresponding Hankel operators. Iterating this step produces a sequence of rational data whose limit is non-rational, satisfies the required spectral conditions, and remains arbitrarily close to the first data.

Before presenting our main results, we collect here the conventions used throughout the paper. We write
\begin{align*}
	\R_+=(0,+\infty),\enspace \R_-=(-\infty,0),\enspace
	\C_\pm=\left\{z\in\C:\ \pm\operatorname{Im}z>0\right\},
\end{align*}
and set $\N=\{0,1,2,\ldots\}$ and $\N^*=\{1,2,\ldots\}$. The symbol $\bm\sigma$ denotes $(\sigma_1,\sigma_2,\sigma_3)$, where $\sigma_1$, $\sigma_2$, and $\sigma_3$ are the standard Pauli matrices. The symbol $\mathcal V$ denotes either $\C^d$ or $M_d(\C)=\C^{d\times d}$, according to the context. These spaces are equipped with the inner products
\begin{align*}
	\langle a,b\rangle_{\C^d}=b^*a,\enspace\langle A,B\rangle_{M_d(\C)}=\Tr(AB^*).
\end{align*}
Their induced norms are denoted by
$|\cdot|_{\mathcal V}$. In addition, the function space $L^2(\R;\mathcal V)$ is equipped with the inner product
\begin{align*}
	\langle f,g\rangle_{L^2}:=\int_{\R}\langle f(x),g(x)\rangle_{\mathcal V}\,\mathrm{d}x.
\end{align*}
For matrices and operators, $^*$ denotes the adjoint and $[A,B]=AB-BA$ the commutator.

The Fourier transform and inverse Fourier transform are normalized by
\begin{align*}
	\widehat f(\xi)=\mathcal{F}[f](\xi)
	:=\int_{\R}\eu^{-\ii x\xi}f(x)\,\mathrm{d}x,\enspace
	f(x)=\frac{1}{2\pi}\int_{\R}\eu^{\ii x\xi}\widehat f(\xi)\,\mathrm{d}\xi.
\end{align*}
The transform acts componentwise on vector-valued and matrix-valued functions. We use the Hardy space
\begin{align*}
	L^2_+(\R;\mathcal V)
	:=\big\{f\in L^2(\R;\mathcal V):\;
	\operatorname{supp}\widehat f\subset [0,+\infty)\big\}
\end{align*}
and the Cauchy--Szeg{\H{o}} projections
\begin{align*}
	\Pi_+:L^2(\R; \mathcal{V})\to L^2_+(\R; \mathcal{V}),\enspace f(x)\mapsto (\Pi_+f)(x):=\frac{1}{2\pi}\int_0^\infty
	\eu^{\ii x\xi}\widehat f(\xi)\,\mathrm{d}\xi.
\end{align*}
So that $\widehat{(\Pi_+ f)}(\xi)=\mathbbm{1}_{\xi\geq0}\widehat f(\xi)$. Furthermore, the space $L^2_-(\R; \mathcal{V}):=\left(L^2_+(\R; \mathcal{V})\right)^\perp$ and the operator $\Pi_-:=\text{Id}-\Pi_+$.

For $s\geq0$, the Sobolev norms are
\begin{align*}
	\|f\|_{\dot H^s}
	&:=\||D|^sf\|_{L^2}=\left[\frac{1}{2\pi}\int_{\R}|\xi|^{2s}
	\left|\widehat f(\xi)\right|_{\mathcal V}^2\,\mathrm{d}\xi\right]^{1/2},\\
	\|f\|_{H^s}
	&:=\|\langle D\rangle^sf\|_{L^2}=\left[\frac{1}{2\pi}\int_{\R}\langle\xi\rangle^{2s}
	\left|\widehat f(\xi)\right|_{\mathcal V}^2\,\mathrm{d}\xi\right]^{1/2},\enspace \langle\xi\rangle=\left(1+|\xi|^2\right)^{1/2}.
\end{align*}
The affine Sobolev space is
\begin{align*}
	H^s_\bullet(\R; \mathcal{V}) := \mathcal{V} \oplus H^s(\R; \mathcal{V}),\enspace H^\infty_\bullet(\R;\mathcal{V}) := \cap_{s > 0} H^s_\bullet(\R;\mathcal{V}).
\end{align*}
Furthermore, one set
\begin{align*}
	H^s_\bullet(\R; \mathcal M):= \left \{ f\in H^s_\bullet(\R; \mathcal V) \mid \mbox{$f(x) \in \mathcal M$ for a.\,e.~$x\in \R$} \right \},
\end{align*}
where $\mathcal M$ denotes either $\mathcal M=\mathbb S^2\subset \mathbb R^3$ or later the complex Grassmannians. Moreover, we denote by $C_0(\R;\mathcal V)$ the space of continuous functions vanishing at infinity.

Finally, let \(\mathcal X\) and \(\mathcal Y\) be separable Hilbert spaces, let \(A:\mathcal X\to\mathcal Y\) be a compact operator, and let \(s_1(A)\ge s_2(A)\ge\cdots>0\) be the singular values of \(A\). The Schatten class norm of \(A\) is defined by
\begin{align*}
	\|A\|_{\mathfrak S_1}=\sum_{j=1}^\infty s_j(A),
	\enspace
	\|A\|_{\mathfrak S_2}
	=\left(\sum_{j=1}^\infty s_j(A)^2\right)^{1/2},
\end{align*}
where \(\mathfrak S_1\) denotes the trace norm and \(\mathfrak S_2\) the Hilbert--Schmidt norm.

We now present our main results, which are divided into two parts. In the first part, we first establish the global well-posedness result in $H^{1/2}_\bullet$. In addition, using jump-function initial data as a counterexample, we prove the ill-posedness in $H^{s}_\bullet$ for every $s<1/2$, which implies that the global well-posedness in $H^{1/2}_\bullet$ is sharp.

On the sharp well-posedness, we state some preliminaries to introduce the results in this direction. Since Lemma \ref{rational fixed background} shows that rational maps are dense in $H^{1/2}_\bullet(\R;\Ss^2)$, we define the global solutions by approximation with the global classical rational solutions.

\begin{definition}\label{def global solution}
Let $\uu_0=\uu_\infty+\mathbf v_0\in H^{1/2}_\bullet(\R;\Ss^2)$, where $\uu_\infty\in\Ss^2$ and $\mathbf v_0\in H^{1/2}(\R;\R^3)$. A curve
\begin{align*}
 \uu=\uu_\infty+\mathbf v\in C(\R;H^{1/2}_\bullet(\R;\Ss^2)),\enspace \uu(0)=\uu_0,
\end{align*}
is called a \textit{global-in-time solution} of \eqref{HWMv} in $H^{1/2}_\bullet$ with initial data $\uu_0$ if, for every sequence
\begin{align*}
 \uu_{0,n}=\uu_\infty+\mathbf v_{0,n}\in\mathcal{R}at(\R;\Ss^2),\enspace
 \|\uu_{0,n}-\uu_0\|_{H^{1/2}}\to0,
\end{align*}
the corresponding global classical rational solutions $\uu_n=\uu_\infty+\mathbf v_n$ with $\uu_n(0)=\uu_{0,n}$ satisfy
\begin{align*}
 \sup_{|t|\leq T}\|\uu_n(t)-\uu(t)\|_{H^{1/2}}\to0,
 \enspace\text{for every }T>0.
\end{align*}
We denote the global-in-time solution by $\Phi_\cdot(\uu_0)$, with $\Phi_t(\uu_0):=\uu(t)$.
\end{definition}
Definition \ref{def global solution} implies that a global-in-time solution, if it exists, is unique. The corresponding global well-posedness is defined as follows.
\begin{definition}\label{def global wellposedness}
Equation \eqref{HWMv} is said to be \textit{globally well-posed} in $H^{1/2}_\bullet(\R;\Ss^2)$ if the following hold:
\begin{itemize}
\item[(i)] For every $\uu_0=\uu_\infty+\mathbf v_0\in H^{1/2}_\bullet(\R;\Ss^2)$, there exists a global-in-time solution with initial data $\uu_0$ in the sense of Definition \ref{def global solution}.
\item[(ii)] The map
\begin{align*}
 \uu_0\mapsto\Phi_\cdot(\uu_0):
 H^{1/2}_\bullet(\R;\Ss^2)\to C(\R;H^{1/2}_\bullet(\R;\Ss^2))
\end{align*}
is continuous. More precisely, for every sequence
\begin{align*}
	\uu_{0,n}=\uu_\infty+\mathbf v_{0,n}\in H^{1/2}_\bullet(\R;\Ss^2),\enspace
	\|\uu_{0,n}-\uu_0\|_{H^{1/2}}\to0,
\end{align*}
the corresponding global-in-time solutions satisfy
\begin{align*}
	\sup_{|t|\leq T}\|\Phi_t(\uu_{0,n})-\Phi_t(\uu_0)\|_{H^{1/2}_\bullet}\to0,\enspace\text{for every }T>0.
\end{align*}
\end{itemize}
\end{definition}
We present the following result concerning the global well-posedness for \eqref{HWMv}.
\begin{theorem}[\textbf{Global well-posedness in $H^{1/2}_\bullet$}]\label{global energy}
Equation \eqref{HWMv} is globally well-posed in $H^{1/2}_\bullet(\R;\Ss^2)$ in the sense of Definition \ref{def global wellposedness}. For every initial data
\begin{align*}
	\uu_0=\uu_\infty+\mathbf v_0\in H^{1/2}_\bullet(\R;\Ss^2),
\end{align*}
the corresponding solution
\begin{align*}
 \uu(t)=\Phi_t(\uu_0)=\uu_\infty+\mathbf v(t),\enspace
 \mathbf v\in C(\R;H^{1/2}(\R;\R^3)),
\end{align*}
solves \eqref{HWMv} in the distributional sense, and the following properties hold.
\begin{itemize}
	\item[(i)] The following functionals are conserved.
	\begin{align*}
		\|\mathbf v(t)\|_{L^2}=\|\mathbf v_0\|_{L^2},\enspace \|\mathbf v(t)\|_{\dot H^{1/2}}=\|\mathbf v_0\|_{\dot H^{1/2}},\enspace t\in\R.
	\end{align*}
	\item[(ii)] Let $\Ub(t)=\Phi_t(\uu_0)\cdot\bm\sigma$. The discrete spectrum of the Toeplitz operator
	\begin{align*}
		T_{\Ub(t)}:L^2_+(\R; \C^2)\to L^2_+(\R; \C^2),\enspace f\mapsto T_{\Ub(t)}f:=\Pi_{+}(\Ub f)
	\end{align*}
	is preserved.
	\item[(iii)] The flow maps satisfy
	\begin{align*}
		\Phi_0=\id,\enspace \Phi_{t+\tau}=\Phi_t\circ\Phi_\tau,\enspace t,\tau\in\R,
	\end{align*}
	and agree with the global classical rational flow on $\mathcal{R}at(\R;\Ss^2)$.
\end{itemize}
\end{theorem}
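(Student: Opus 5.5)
We will construct the flow through the explicit formula of Gérard and Lenzmann for rational data and pass to the limit. For rational $\uu_0$ with $\Ub_0=\uu_0\cdot\bm\sigma$, the explicit formula expresses $\Pi_+\Ub(t)$, or equivalently $\Pi_+\mathbf v(t)$, through a resolvent of the form
\begin{align*}
	\Pi_+\Ub(t)(z)=\frac{1}{2\pi\ii}I_+\bigl[(G-tT_{\Ub_0}-z)^{-1}\Pi_+\Ub_0\bigr],\enspace z\in\C_+,
\end{align*}
where $G$ is the generator of multiplication by $x$ on $L^2_+$ (adjoint of the shift semigroup), and $I_+$ is the pairing that recovers the value of a Hardy function. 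Since $\mathbf v(t)$ is real, $\Pi_+\mathbf v(t)$ determines $\mathbf v(t)$. The first step is to introduce the operator family $S_t(\Ub_0)$ defined through this resolvent representation. We will show it is a contraction on $L^2_+(\R;\C^2)$ for every $\Ub_0\in H^{1/2}_\bullet$, using the semigroup generated by $G^*-tT_{\Ub_0}$; dissipativity follows because $T_{\Ub_0}$ is bounded self-adjoint and $G^*$ generates a contraction semigroup.

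Next we prove stability. If $\Ub_{0,n}\to\Ub_0$ in $H^{1/2}_\bullet$, then $T_{\Ub_{0,n}}\to T_{\Ub_0}$ at least strongly, and the Hankel operators $H_{\Ub_{0,n}}\to H_{\Ub_0}$ in Hilbert–Schmidt norm by Peller's $\mathfrak S_2\leftrightarrow\dot H^{1/2}$ correspondence. The second resolvent identity then gives strong convergence of the resolvents, uniformly for $|t|\le T$, and hence strong convergence of $S_t(\Ub_{0,n})$. This yields $\mathbf v_n(t)\to\mathbf v(t)$ weakly in $H^{1/2}$ and strongly in $L^2_{\mathrm{loc}}$, uniformly on compact time intervals.

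The key, and hardest, step is upgrading this to norm convergence in $H^{1/2}$. Here we use the intertwining relation valid for rational data, $T_{\Ub_n(t)}S_t=S_tT_{\Ub_{0,n}}$ together with its Hankel analogue, and pass to the limit. An intertwining of this type, combined with the contraction property, should force $S_t$ to be an isometry. The isometry gives $\|\mathbf v(t)\|_{L^2}=\|\mathbf v_0\|_{L^2}$. The trace identity $\|H_{\Ub}\|_{\mathfrak S_2}^2=c\|\mathbf v\|_{\dot H^{1/2}}^2$, with $H_{\Ub(t)}$ unitarily equivalent to a conjugate of $H_{\Ub_0}$, gives conservation of $\dot H^{1/2}$, and unitary equivalence of $T_{\Ub(t)}$ with $T_{\Ub_0}$ gives (ii). Once the norms are conserved, the identity $\|\mathbf v_n(t)\|_{H^{1/2}}=\|\mathbf v_{0,n}\|_{H^{1/2}}\to\|\mathbf v_0\|_{H^{1/2}}=\|\mathbf v(t)\|_{H^{1/2}}$ combined with weak convergence upgrades to strong convergence. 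Uniformity in $|t|\le T$ follows from equicontinuity in time of $\mathbf v_n$ in $L^2$ plus compactness.

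It remains to verify the other properties. The limit is sphere-valued as an a.e. limit of sphere-valued maps. It is independent of the approximating sequence and continuous in time, and it solves \eqref{HWMv} distributionally because the nonlinearity $\uu\times|D|\uu$ passes to the limit under strong $H^{1/2}$ convergence, via the commutator form $\partial_x(\uu\times\mathcal H\uu)$ paired with tests. The group property (iii) holds for rational data and passes to the limit by uniqueness of the limit. Continuous dependence (ii) of Definition \ref{def global wellposedness} follows by a diagonal argument: approximate each $\uu_{0,n}$ by rational data, using the same resolvent stability estimates uniformly, and then apply the conservation-based upgrade above.
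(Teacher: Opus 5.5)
Your outline follows the same architecture as the paper: explicit formula, contraction via the semigroup generated by $-\ii(X^*+tT_{\Ub_0})$, weak limits from resolvent stability, passage to the limit in the intertwining relation, isometry, then conservation laws to upgrade weak to strong convergence. However, the step you call the hardest is only asserted, and the ingredients you name cannot deliver it. The operator $S=0$ is a contraction satisfying $\widetilde TS=ST$ for all $T,\widetilde T$. The operator $S=\tfrac12\id$ with $\widetilde T=T$ is even an injective intertwining contraction. Neither is an isometry.

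\textbf{Missing defect identity for $S_t$.} Set $W(\eta)=\eu^{-\ii\eta(X^*+tT_{\Ub_0})}$. Then $\widehat{S_tF}(\eta)=I_+(W(\eta)F)$ for $\eta>0$, and $\frac{\mathrm d}{\mathrm d\eta}\|W(\eta)F\|_{L^2}^2=-\frac1{2\pi}|I_+(W(\eta)F)|_{\mathcal V}^2$. Hence $\|S_tF\|_{L^2}^2=\|F\|_{L^2}^2-\lim_{\eta\to\infty}\|W(\eta)F\|_{L^2}^2$.
\begin{itemize}
\item $W(\eta)$ is a coisometry: its adjoint semigroup is generated by $\ii(X+tT_{\Ub_0})$, where $X$, the restriction of $X^*$ to $\{I_+=0\}$, is symmetric.
\item So the defect equals $\|P_\infty F\|_{L^2}^2$, where $P_\infty$ projects onto $\ker(S_t)=\bigcap_{r\geq0}\ran(W(r)^*)$. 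This subspace reduces $W$, and $W$ is unitary on it.
\item Only now does the intertwining enter. It makes $\ker(S_t)$ invariant under the self-adjoint $T_{\Ub_0}$, hence reducing for $T_{\Ub_0}$, hence for $X^*$ and for the shift semigroup $\eu^{-\ii\eta X^*}$.
\item Unitarity of $W$ on $\ker(S_t)$ forces $I_+(f)=0$ for $f\in\ker(S_t)\cap\dom(X^*)$. Shift invariance then gives $\widehat f(\eta)=I_+(\eu^{-\ii\eta X^*}f)=0$ for all $\eta\geq0$.
\item Thus $\ker(S_t)=\{0\}$, the defect vanishes, and $S_t$ is an isometry.
\end{itemize}

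\textbf{Overclaimed strong convergence.} You say the second resolvent identity yields strong convergence of $S_t(\Ub_{0,n})$; this is too strong. It gives convergence of the resolvents in the graph norm of $X^*$, hence of $S_t(\Ub_{0,n})F(z)$ for each $z\in\C_+$. Together with the contraction bound, that is only weak convergence in $L^2_+$. The Hilbert--Schmidt convergence of the Hankel operators does not enter here. Strong convergence is available only after the limit is known to be an isometry. For rougher data such limits genuinely lose norm, as the ill-posedness construction below $H^{1/2}_\bullet$ shows.

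\textbf{From isometry to $\dot H^{1/2}$ conservation and (ii).} An isometric intertwiner does not make $T_{\Ub(t)}$ unitarily equivalent to $T_{\Ub_0}$.
\begin{itemize}
\item From $T_{\Ub(t)}S_t=S_tT_{\Ub_0}$ you only get that $\ran(S_t)$ reduces $T_{\Ub(t)}$ and that $T_{\Ub(t)}|_{\ran(S_t)}$ is unitarily equivalent to $T_{\Ub_0}$.
\item A priori, $\ran(S_t)^\perp$ could carry extra trace of $K_{\Ub(t)}=\id-T_{\Ub(t)}^2$ and extra eigenvalues in $(-1,1)$. Proving that $S_t$ is actually unitary would need a separate argument.
\item So the trace identity gives only $\|\mathbf v(t)\|_{\dot H^{1/2}}^2=c\,\Tr(K_{\Ub(t)})\geq c\,\Tr(K_{\Ub(t)}|_{\ran(S_t)})=c\,\Tr(K_{\Ub_0})=\|\mathbf v_0\|_{\dot H^{1/2}}^2$.
\item The reverse inequality is already available to you. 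Weak lower semicontinuity along the rational approximations gives $\|\mathbf v(t)\|_{\dot H^{1/2}}\leq\liminf_n\|\mathbf v_{0,n}\|_{\dot H^{1/2}}=\|\mathbf v_0\|_{\dot H^{1/2}}$.
\item Equality then forces $K_{\Ub(t)}=0$ on $\ran(S_t)^\perp$, i.e.\ $T_{\Ub(t)}^2=\id$ there. So every eigenvector of $T_{\Ub(t)}$ with eigenvalue in $(-1,1)$ lies in $\ran(S_t)$, and $\sigma_{\mathrm d}(T_{\Ub(t)})=\sigma_{\mathrm d}(T_{\Ub_0})$. This is how the paper argues.
\end{itemize}

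\textbf{Distributional step.} The identity you invoke is wrong: $\uu\times|D|\uu$ is not $\partial_x(\uu\times\mathcal H\uu)$; the two differ by $\partial_x\uu\times\mathcal H\uu$. Use instead $\int(\uu\times|D|\uu)\cdot\psi\,\mathrm dx=\int|D|^{1/2}\mathbf v\cdot|D|^{1/2}(\psi\times\uu)\,\mathrm dx$. This passes to the limit under strong $H^{1/2}$ convergence with the uniform bound $|\uu|=1$.

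With these repairs, the rest of your plan goes through: weak plus norm convergence, uniformity in time, the group property, and continuity by a diagonal argument.
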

\begin{remark}
For a more general statement, see Proposition \ref{global solution matrix} in Section \ref{p of 0}, where we establish this global well-posedness for the generalized half-wave maps equation. For more details about this equation, see Section \ref{Preliminaries}.
\end{remark}
\begin{remark}
Theorem \ref{global energy} is closely related to the global well-posedness in $\dot H^{1/2}$ announced without proof by G\'{e}rard and Lenzmann in \cite{gerard2026the}. Both results concern the critical energy regularity. The extra assumption here is $\uu_0-\uu_\infty\in L^2$. This $L^2$ regularity of the perturbation is preserved for all time. We briefly describe how the argument in Section \ref{p of 0} can be adapted to remove this assumption.

Equip $\dot H^{1/2}(\R;\Gr_k(\C^d))$ with the distance
\begin{align*}
 d(\Ub,\widetilde\Ub)^2
 =\|\Ub-\widetilde\Ub\|_{\dot H^{1/2}}^2
 +\int_{\R}\frac{|\Ub(x)-\widetilde\Ub(x)|_F^2}{1+x^2}\,dx.
\end{align*}
The second term is finite for all sphere-valued maps. Rational density in this distance follows from the Cayley transform and approximation in $H^{1/2}$ on the circle; see \cite[Appendix A]{gerard2025global}.

Energy conservation and the target constraint give local $L^2$ compactness of the rational solutions. For each local limit, the intertwining relation in \eqref{rational explicit} passes to the limit. The conclusion of Lemma \ref{OtTisometry} can be strengthened to unitarity. Indeed, the semigroup representation of $\mathcal O_{tT}$ shows that its closed range reduces $\eu^{-\ii\eta X^*}$ and that $I_+(\mathcal O_{tT}F)=I_+(F)$ for $F\in\dom(X^*)$. The boundary identity for $X^*$ then forces the orthogonal complement of this range to be zero. Thus the limiting intertwining relation gives
\begin{align*}
 T_{\Ub(t)}=\mathcal O_{tT_{\Ub_0}}T_{\Ub_0}\mathcal O_{tT_{\Ub_0}}^*.
\end{align*}
A bounded symbol is uniquely determined by its Toeplitz operator, so this identity identifies every local limit without applying $\mathcal O_{tT_{\Ub_0}}$ to an $L^2$ perturbation of the initial data. The identities $K_\Ub=\id-T_\Ub^2$ and $\|\Ub\|_{\dot H^{1/2}}^2=4\pi\Tr K_\Ub$ then give energy conservation and strong convergence in the distance above. The remaining continuity and flow arguments follow as in Section \ref{p of 0}.
\end{remark}

We next show that the global well-posedness result we established is sharp.
\begin{theorem}[\textbf{Ill-posedness below $H^{1/2}_\bullet$}]\label{thm illposedness}
Let the sequence
\begin{align*}%\label{sharp rational data}
 \mathbf u_{0,n}(x)=\left(\frac{2x^{2n}}{1+x^{4n}},\,0,\,\frac{x^{4n}-1}{1+x^{4n}}\right)\in\mathcal Rat(\R;\Ss^2),\enspace n\ge1.
\end{align*}
Then
\begin{align}\label{sharp initial convergence}
 \lim_{n\to\infty}\|\mathbf u_{0,n}-f(\cdot)\mathbf e_3\|_{H^s}=0\enspace\text{for every }s<1/2,\enspace f=1-2\mathbbm1_I,\enspace I=(-1,1),
\end{align}
where $\mathbf e_3=(0,0,1)^\top$. Let $\mathbf u_n(t)=\Phi_t(\mathbf u_{0,n})=\mathbf e_3+\mathbf v_n(t)$, then for every $t\ne0$,
\begin{align}\label{sharp weak limit}
 \mathbf v_n(t)\rightharpoonup (h_t-1)\mathbf e_3\enspace\text{in }L^2(\mathbb R;\mathbb R^3),\enspace h_t(x)=\frac{1}{2|t|}\int_{x-|t|}^{x+|t|}f(y)\,dy.
\end{align}
The function $h_t\mathbf e_3$ is not sphere-valued. Furthermore,
\begin{align}\label{sharp defect}
 \lim_{n\to\infty}\|\mathbf u_n(t)-h_t\mathbf e_3\|_{L^2}^2=\frac{8|t|}{3}\enspace \text{for }\; 0<|t|<1.
\end{align}
\end{theorem}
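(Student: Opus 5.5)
I would organize the argument in four steps: the initial convergence \eqref{sharp initial convergence}, a reduction of the explicit formula to a scalar problem, the weak limit \eqref{sharp weak limit}, and the $L^2$ defect \eqref{sharp defect}.

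\textbf{Initial convergence.} First observe that $\uu_{0,n}$ is $(\sin\theta_n,0,\cos\theta_n)$ with $\theta_n$ moving from $\pi$ to $0$ in a layer of width $O(1/n)$ around $x=\pm1$. Indeed $x^{2n}$ passes from $0$ to $\infty$ across $|x|=1$ on the scale $1/n$. Hence $\uu_{0,n}-f\mathbf e_3$ is bounded in $L^\infty$ and supported essentially in $O(1/n)$-neighborhoods of $\pm1$, with exponentially decaying tails in $n|\,|x|-1|$. This gives an $L^2$ norm $O(n^{-1/2})$. On the other hand, $\|\uu_{0,n}\|_{\dot H^{1/2}}$ is bounded uniformly in $n$: the energy of each transition layer is scale invariant, or one can use the trace formula $\|\Ub\|^2_{\dot H^{1/2}}=4\pi\Tr K_\Ub$, since the rational data have degree growing like $n$ but energy bounded. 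Interpolating, $\|\uu_{0,n}-f\mathbf e_3\|_{\dot H^s}\lesssim n^{-(1/2-s)}\to0$ for $s<1/2$. The $\dot H^{1/2}$ bound on $f\mathbf e_3$ fails, but for $s<1/2$ one can interpolate between $L^2$ and $\dot H^{1/2-\epsilon'}$ with a uniform bound instead.

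\textbf{Reduction to a scalar problem.} Next use the explicit formula for the rational flow, stated earlier in the paper as \eqref{rational explicit}. It expresses $\Pi_+$ of the perturbation through a resolvent of the form $\langle (X^*-tT_{\Ub_{0,n}}-z)^{-1}\Pi_+ \mathbf v_{0,n},\cdot\rangle$, or its equivalent form via $\mathcal O_{tT}$. Weak convergence in $L^2$ only needs the pairing against $\Pi_+$ of test functions; the real part then follows because the limit is real. The key reduction is to write $\Ub_{0,n}=\uu_{0,n}\cdot\bm\sigma$ and note that the data lie in the $(x,z)$-plane. Since $\Ub_{0,n}\to f\sigma_3$ in $L^p_{loc}$ with uniform bounds, the Toeplitz operators $T_{\Ub_{0,n}}\to T_{f\sigma_3}$ strongly. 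The limiting operator $T_{f\sigma_3}=\mathrm{diag}(T_f,T_{-f}\ldots)$ is diagonal and acts on each component by the scalar Toeplitz operator $T_f$ or $T_{-f}$.

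\textbf{Weak limit.} By the resolvent stability results already proved in the paper, under strong convergence of the Toeplitz operators the resolvents converge strongly. The matrix resolvent therefore collapses to scalar resolvents $(X^*-tT_f-z)^{-1}$, together with $-t$ for the other component. For the $\mathbf e_3$ direction, $\sigma_3$ pairs these two components, and the formula becomes exactly G\'erard's explicit formula for the zero-dispersion Benjamin--Ono equation $\partial_t u+\partial_x(u^2)=0$ style Hopf flow. In fact it is the linear-in-$T_f$ version, whose solution from \cite{gerard2024zero} is the average $h_t$ over the interval $[x-|t|,x+|t|]$ (the $\pm t$ symmetry produces $|t|$). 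I would verify this by citing the formula from \cite{gerard2024zero} rather than recomputing. The $\mathbf e_1$ component tends weakly to $0$ because the initial $\mathbf e_1$ component tends to $0$ in $L^2$, and the stability estimate is linear in the data. Finally, $h_t$ takes values strictly between $-1$ and $1$ near $\pm1\pm|t|$, so $h_t\mathbf e_3$ is not sphere-valued.

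\textbf{Defect identity.} For \eqref{sharp defect}, I would use $L^2$ conservation for the rational flow (Theorem \ref{global energy}(i)): $\|\mathbf v_n(t)\|_{L^2}^2=\|\mathbf v_{0,n}\|^2_{L^2}\to\|f-1\|^2_{L^2}=8$. Combined with the weak convergence, this gives
\[
\|\uu_n(t)-h_t\mathbf e_3\|^2\to 8-\|h_t-1\|^2_{L^2}.
\]
For $|t|<1$ the intervals do not overlap. The function $h_t-1$ equals $-2$ on a set of length $2-2|t|$ and is linear on two ramps of length $2|t|$ each. This gives $\|h_t-1\|^2=4(2-2|t|)+2\cdot\tfrac{4\cdot2|t|}{3}=8-\tfrac{8|t|}{3}$, which is the claimed value.

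The main obstacle is the identification of the limiting matrix resolvent with the scalar Benjamin--Ono resolvent. One must justify strong resolvent convergence for the unbounded generator $X^*-tT$ when $T_{\Ub_{0,n}}$ converges only strongly, not in norm, and when the data do not converge in $H^{1/2}$. I would rely on the paper's perturbation lemma for the resolvent transform, which only requires strong convergence of bounded perturbations, and handle the infinite-energy loss in the limit by using test functions in $\dom(X^*)$.
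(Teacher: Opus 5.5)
Your overall route is the paper's: strong convergence $T_{\mathbf U_{0,n}}\to T_{f\sigma_3}$, weak convergence of the explicit formula via Lemma \ref{observation contraction}(v), reduction to G\'erard's zero-dispersion formula, and $L^2$ conservation plus weak convergence for the defect. Your value $8-\tfrac{8|t|}{3}$ is correct. However, your proof of \eqref{sharp initial convergence} rests on a false claim: $\|\mathbf v_{0,n}\|_{\dot H^{1/2}}$ is \emph{not} bounded in $n$. If it were, then since $\mathbf v_{0,n}\to-2\mathbbm 1_I\mathbf e_3$ in $L^2$, weak compactness would give $\mathbbm 1_I\in H^{1/2}$. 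This is false because $\widehat{\mathbbm 1_I}(\xi)=2\sin\xi/\xi$. You yourself note that $f\mathbf e_3$ has infinite energy, which is incompatible with the claimed bound. In fact the energy grows like $\log n$: the two sides of each transition layer of width $1/n$ interact over the fixed length scale of $I$, so scale invariance of a single layer does not help. The trace formula only bounds the energy by a multiple of $\rank K_{\mathbf U_{0,n}}$, which grows with $n$. Your fallback (a uniform $\dot H^{s'}$ bound for some $s'<1/2$, then interpolation) is true, but you give no proof, and it is precisely the content of this step. The paper obtains it directly. With $\mathbf d_n=\mathbf u_{0,n}-f\mathbf e_3$, the pointwise bounds give $\|\mathbf d_n\|_{L^1}\lesssim 1/n$ and $\operatorname{TV}(\mathbf d_n)\lesssim 1$. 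Hence $|\widehat{\mathbf d_n}(\xi)|\lesssim\min\{n^{-1},|\xi|^{-1}\}$ and $\|\mathbf d_n\|_{H^s}^2\lesssim n^{2s-1}$ for $0\le s<1/2$, with no interpolation needed.

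The scalar identification is also not ``exactly'' G\'erard's formula. His result is for $q\in L^2\cap L^\infty$ with data $\Pi_+q$, whereas $f\notin L^2$. The missing step is to write $T_f=\id-2T_{\mathbbm 1_I}$ and $\Pi_+\mathbf V_*=-2(\Pi_+\mathbbm 1_I)\sigma_3$. Then the $(1,1)$ entry of the limit is
\[
-\frac{2}{2\pi\ii}I_+\bigl[(X^*-2tT_{\mathbbm 1_I}-(z-t)\id)^{-1}\Pi_+\mathbbm 1_I\bigr]=-2\,\Pi_+F_{-t}(z-t),\qquad F_\tau:=Z_{\mathbbm 1_I}(\tau),
\]
and the $(2,2)$ entry is $2\,\Pi_+F_t(z+t)$. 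G\'erard's formula then gives $\mu_{2\tau\mathbbm 1_I}$ with density $1-\mathbbm 1_I+\mathbbm 1_{I+2\tau}$, hence $F_\tau(x)=\frac{1}{2\tau}\int_{x-2\tau}^{x}\mathbbm 1_I(y)\,dy$. The shifts $z\mp t$, coming from the background $\sigma_3$, are what turn these one-sided averages into the centered average $h_t$.

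Likewise, the off-diagonal ($\mathbf e_1,\mathbf e_2$) entries of the limit vanish because $\Pi_+\mathbf V_*$ is diagonal and $T_{f\sigma_3}$ acts row-wise by $\pm T_f$, so the limiting resolvent preserves diagonal matrices. Smallness of the initial $\mathbf e_1$ component plus linearity in the data does not suffice by itself, since $T_{\mathbf U_{0,n}}$ mixes entries. Finally, what you call the main obstacle is already settled. Lemma \ref{observation contraction}(v) gives weak convergence from strong convergence of the bounded perturbations, and the contraction bound $\|\mathcal O_B\|\le 1$ absorbs $\Pi_+\mathbf V_{0,n}\to\Pi_+\mathbf V_*$. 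No special class of test functions is needed.
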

\begin{remark}
The same example gives ill-posedness for the generalized half-wave maps equation. For more details about this equation, see Section \ref{Preliminaries}. In fact, for every $1\leq k\leq d-1$, using the Pauli matrices from \eqref{relation}, set
\begin{align*}
 \Ub_n(t,x)=\operatorname{diag}\bigl(\mathbf u_n(t,x)\cdot\bm\sigma,\mathbbm1_{d-k-1},-\mathbbm1_{k-1}\bigr).
\end{align*}
Each $\Ub_n$ takes values in $\Gr_k(\C^d)$ and solves \eqref{HWMm}. Their initial data converge to
\begin{align*}
 \Ub_*(x)=\operatorname{diag}\bigl(f(x)\sigma_3,\mathbbm1_{d-k-1},-\mathbbm1_{k-1}\bigr).
\end{align*}
in $H^s_\bullet$ for every $s<1/2$. At every $t\ne0$, their distributional limit is
\begin{align*}
 \widetilde\Ub(t,x)=\operatorname{diag}\bigl(h_t(x)\sigma_3,\mathbbm1_{d-k-1},-\mathbbm1_{k-1}\bigr).
\end{align*}
Hence $\widetilde\Ub^2\ne\mathbbm1_d$ on a set of positive measure, which implies that this limit leaves $\Gr_k(\C^d)$.
\end{remark}

In the second part, under the necessary spectral assumptions, we study the long-time behavior of solutions with general initial data in $H^{1/2}_\bullet$. Interestingly, we find that non-rational solutions always asymptotically approach an infinite superposition of independent soliton solutions with different speeds. We also prove the density in the energy space of initial data satisfying the spectral assumptions.

To put this in context, recall that rational solutions of \eqref{HWMv} asymptotically approach the sum of finitely many solitons. In contrast, we show that for initial data in $H^{1/2}_\bullet$ other than rational functions, the corresponding solutions asymptotically approach an infinite soliton series. Together with the density result, this implies the establishment of the soliton resolution result for general initial data in $H^{1/2}_\bullet$.
\begin{theorem}[\textbf{Soliton resolution in the $\dot{H}^{1/2}$ seminorm}]\label{infinitysolitons}
	Let
\begin{align*}
\uu_0=\uu_\infty+\mathbf v_0\in H^{1/2}_\bullet(\R;\Ss^2),\enspace \uu_0\notin\mathcal{R}at(\R;\Ss^2),\enspace \Ub_0=\uu_0\cdot\bm\sigma.
\end{align*}
Suppose that the discrete spectrum $(v_n)_{1\leq n\leq\infty}$ of the Toeplitz operator
\begin{align*}
	T_{\Ub_0}:L^2_+(\R; \C^2)\to L^2_+(\R; \C^2),\enspace f\mapsto T_{\Ub_0}f:=\Pi_{+}(\Ub_0 f)
\end{align*}
is simple and satisfies
	\[
		\sum_{n=1}^\infty \sqrt{1-v_n^2}<\infty.
	\]
	Then there exist ground state solitary wave profiles $( \qv_{v_n})_{1\leq n\leq\infty}$ such that
	\begin{align*}
		\lim_{m\to\infty}\limsup_{t \to \pm \infty} \left\| \mathbf{v}(t) - \sum_{n=1}^m \left(\qv_{v_n}(x-v_nt)-\uu_\infty\right)  \right\|_{H^\frac{1}{2}} = 0
	\end{align*}
	and
	\begin{align*}
		\lim_{t \to \pm \infty} \left\| \mathbf{v}(t) - \sum_{n=1}^\infty \left(\qv_{v_n}(x-v_nt)-\uu_\infty\right)  \right\|_{\dot{H}^\frac{1}{2}} = 0.
	\end{align*}
\end{theorem}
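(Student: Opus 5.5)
The plan is to prove the soliton resolution spectrally, using the explicit formula that comes with the global flow of Theorem \ref{global energy}, and following the strategy of \cite{gassot2026infinite,gassot2026proof}.

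\textbf{Step 1: spectral decomposition of the data.} Since $\Ub_0^2=\mathbbm1$, the Hankel identity $K_{\Ub_0}=\id-T_{\Ub_0}^2$ holds. Moreover $\|\Ub_0\|_{\dot H^{1/2}}^2=4\pi\Tr K_{\Ub_0}$ is finite. Hence $\id-T_{\Ub_0}^2$ is trace class, and the discrete spectrum $(v_n)$ of $T_{\Ub_0}$ lies in $(-1,1)$ and accumulates only at $\pm1$.

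I would split $L^2_+(\R;\C^2)=\mathcal H_{pp}\oplus\mathcal H_{ac}$. Here $\mathcal H_{pp}$ is spanned by the normalized eigenfunctions $\varphi_n$ with $T_{\Ub_0}\varphi_n=v_n\varphi_n$.

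The first task is to locate the $\varphi_n$ in a suitable subspace: I expect $\varphi_n\in\dom(X^*)$ with a controlled boundary value $I_+(\varphi_n)$. With that in hand, I would establish a distorted Plancherel formula for the continuous part. Its consequence should be that $\eu^{-\ii t T_{\Ub_0}}$ restricted to $\mathcal H_{ac}$ produces only dispersive contributions, which tend to zero weakly after the relevant transforms.

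\textbf{Step 2: identifying the single solitons.} For each $n$ I would insert $\varphi_n$ into the explicit formula and compute the profile of $\mathbf v(t)$ in the frame $x-v_nt$. Because $\varphi_n$ is an eigenvector of $T_{\Ub_0}$, the flow acts on it by a pure phase and translation.

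This should give a rank-one profile of the form $\frac{\sv_n}{x-z_n}+\frac{\overline{\sv_n}}{x-\overline{z_n}}$. I would then use the identities coming from the eigenvalue equation (pairing with $\varphi_n$, $X^*\varphi_n$ and $I_+$) to recover the constraints $\sv_n\cdot\sv_n=0$ and $\sv_n\cdot(\uu_\infty+\overline{\sv_n}/(z_n-\overline{z_n}))=0$. These constraints identify the profile as the ground state $\qv_{v_n}-\uu_\infty$.

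Simplicity of the spectrum makes the speeds distinct. So for $t\to\pm\infty$ the cross terms between different solitons, and between solitons and the continuous part, vanish.

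\textbf{Step 3: energy bookkeeping and the finite-$m$ statement.} The Hankel operator of the single soliton $\qv_{v_n}$ has rank one, and its trace equals $1-v_n^2$, matching $E(\qv_{v_n})=(1-v_n^2)\pi$. Using conservation of the $\dot H^{1/2}$ energy and of $\Tr K$, I would show that
\begin{align*}
\|\mathbf v(t)\|_{\dot H^{1/2}}^2=4\pi\sum_n(1-v_n^2)+(\text{continuous contribution}).
\end{align*}
The $L^2$ part needs separate handling. The summability $\sum\sqrt{1-v_n^2}<\infty$ ensures that $\sum_n(\qv_{v_n}-\uu_\infty)$ converges in $L^2$, because $\|\qv_{v_n}-\uu_\infty\|_{L^2}\sim\sqrt{1-v_n^2}$ by scaling. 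I would then combine this with weak convergence in the moving frames and conservation of the norms (Theorem \ref{global energy}(i)). Together these upgrade weak to strong convergence modulo the tail $\sum_{n>m}$, which gives the $\lim_m\limsup_t$ statement in $H^{1/2}$.

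\textbf{Step 4: the infinite sum in $\dot H^{1/2}$.} The full-sum statement requires that the error $\mathbf w(t)=\mathbf v(t)-\sum_n(\qv_{v_n}(\cdot-v_nt)-\uu_\infty)$ tends to zero in $\dot H^{1/2}$ uniformly in the tail. I would bound $\|\mathbf w\|_{\dot H^{1/2}}^2$ by the Hilbert--Schmidt norm of the Hankel operator $H_{\mathbf w}$. To control that, I would use finite-rank spectral truncations $P_m$ onto $\mathrm{span}\{\varphi_1,\dots,\varphi_m\}$ and estimate $\|H_{\mathbf w}\|_{\mathfrak S_1}$, or $\|H_{\mathbf w}\|_{\mathfrak S_2}$, by the truncated part plus $C\sum_{n>m}\sqrt{1-v_n^2}$.

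This trace norm estimate is the main obstacle. The difficulty is making the truncation error uniform in $t$ when no higher conservation laws are available. My first attempt would be to write the explicit-formula resolvent with $P_m$ inserted and use the resolvent identity. The difference is then a product of an operator bounded uniformly in $t$ (the contractive/isometric $\mathcal O_{tT}$) with the trace-class remainder $(\id-P_m)K_{\Ub_0}^{1/2}$, whose trace norm is controlled by $\sum_{n>m}\sqrt{1-v_n^2}$ plus the continuous part. The continuous part is handled by Step 1. Sending $t\to\pm\infty$ and then $m\to\infty$ concludes the proof.
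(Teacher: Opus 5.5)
There is a genuine gap, and it sits in your ``continuous part''. First, $T_{\Ub_0}$ has no continuous spectrum at all. Since $T_{\Ub_0}^2=\id-K_{\Ub_0}$ with $K_{\Ub_0}$ trace class, $T_{\Ub_0}$ is an involution on $\Hfr_0=\ker K_{\Ub_0}$, and on $\Hfr_1$ it is diagonalized by the $\varphi_n$. The real question is whether $\Pi_+\mathbf V_0$ has a component in $\Hfr_0$, and the theorem forces the answer to be no. Your own bookkeeping (isometry of the transform, weak limits in the frames $x-v_nt$, vanishing cross terms) gives, as in the paper,
\[
\lim_{m\to\infty}\lim_{t\to\pm\infty}\|\Pi_+\mathbf r_m(t)\|_{L^2}^2=\|\Pi_+\mathbf V_0\|_{L^2}^2-\sum_{n}\|\Pi_+\Rb_{v_n}\|_{L^2}^2 .
\]
Since $\|\Pi_+\Rb_{v_n}\|_{L^2}^2=(1-v_n^2)|I_+(\varphi_n)|^2=\bigl|\int_\R\mathbf V_0\varphi_n\,\mathrm dx\bigr|^2$ by Corollary \ref{hds}, the right side is exactly the squared norm of the projection of $\Pi_+\mathbf V_0$ onto $\Hfr_0$, column by column. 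A remainder that merely tends to zero weakly leaves this defect in place, so the $L^2$ (hence $H^{1/2}$) part of the first assertion fails. What you need is the completeness behind Proposition \ref{Plancherel}: $\Pi_+\mathbf V_0e_j\in\Hfr_1$ for every $j$ (from \cite[Proposition 4.2]{gerard2025global}). With Corollary \ref{hds} this gives the exact identity $\|\Pi_+\mathbf V_0\|_{L^2}^2=\sum_n\|\Pi_+\Rb_{v_n}\|_{L^2}^2$, i.e.\ no radiation. In $\dot H^{1/2}$ there is no extra term to begin with, since $\|\Pi_+\mathbf V(t)\|_{\dot H^{1/2}}^2=2\pi\Tr K_{\Ub_0}=2\pi\sum_n(1-v_n^2)$ exactly. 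Relatedly, your claim $\|\qv_{v_n}-\uu_\infty\|_{L^2}\sim\sqrt{1-v_n^2}$ ``by scaling'' is false. The $L^2$ norm is not scale invariant; it is proportional to $\sqrt{(1-v_n^2)\delta_n}$, and the width $\delta_n=|I_+(\varphi_n)|^2/(4\pi)$ is not controlled by the spectral hypothesis. The only $L^2$ control on the solitons is the square-summability from the Plancherel identity. That suffices for the $\lim_m\limsup_t$ statement but not for $L^2$ convergence of the full series, consistent with the full-series assertion being made only in $\dot H^{1/2}$.

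Step 4 is aimed at the wrong target. Once the finite-$m$ statement holds, the infinite-sum statement is just the triangle inequality $\|\mathbf w(t)\|_{\dot H^{1/2}}\le\|\mathbf r_m(t)\|_{\dot H^{1/2}}+\sum_{n>m}\|\Rb_{v_n}\|_{\dot H^{1/2}}$, where $\|\Rb_{v_n}\|_{\dot H^{1/2}}=2\sqrt\pi\sqrt{1-v_n^2}$ is scale invariant and summable. The paper uses the finite-rank truncation and trace norm estimate (Lemma \ref{tail trace}, via Fan's inequality) only for the $L^\infty$ upgrade in Theorem \ref{infinitysolitonsLinfty}, not here. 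Your proposed factorization through $\mathcal O_{tT}$ and $(\id-P_m)K_{\Ub_0}^{1/2}$ also has no basis, since the Hankel operators of the translated solitons are not produced by the flow in that way. Two smaller points. First, $\varphi_n\in\dom(X^*)$ is not available for mere $H^{1/2}$ data. The paper first gets $\mathbf V_0\in C_0$ from the summability hypothesis via Peller's theorem ($\|\mathbf V_0\|_{L^\infty}\lesssim\|\mathbf V_0\|_{\dot B^1_{1,1}}\lesssim\sum_n\sqrt{1-v_n^2}$, Lemma \ref{spectral Linfty}), then runs a difference-quotient argument on the Fourier side (Proposition \ref{varphiin}). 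Second, in Step 2 the flow does not act on $\varphi_n$ by a phase and a translation, because $X^*$ does not commute with $T_{\Ub_0}$. The limit profile comes from the ansatz $\Phi_{n,j}/(\langle X^*\varphi_n,\varphi_n\rangle-z)$ together with Lemma \ref{tozero}, which makes every component orthogonal to $\ker(T_{\Ub_0}-v_n\id)$ vanish in the frame $z+tv_n$.
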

\begin{remark}
Theorem \ref{infinitysolitons} shows the soliton resolution for initial data in the critical space other than rational data. Together with the soliton resolution for rational data established in \cite{gerard2025global}, this establishes the soliton resolution result for general initial data in the critical space.
\end{remark}
\begin{remark}
For a more general statement, see Propositions \ref{infinitysolitons general} in Section \ref{p of 1}, where we establish this soliton resolution for the generalized half-wave maps equation. For more details about this equation, see Section \ref{Preliminaries}.
\end{remark}
\begin{remark}
	The spectral summability condition ensure the convergence of the soliton series in the energy space and also ensure its absolute uniform convergence. It also implies $\mathbf v_0\in C_0$ by Lemma \ref{spectral Linfty}. This supplies the spatial decay needed in the spectral analysis without an additional Sobolev assumption above $H^{1/2}$.
\end{remark}
The following result shows that the soliton resolution also holds in the $L^\infty$ norm.
\begin{theorem}[\textbf{Soliton resolution in the $L^{\infty}$ norm}]\label{infinitysolitonsLinfty}
	Under the assumptions and with the notation of Theorem \ref{infinitysolitons}, the series
	\begin{align*}
		\sum_{n=1}^\infty \left(\qv_{v_n}(x-v_nt)-\uu_\infty\right)
	\end{align*}
	converges absolutely and uniformly in $x\in\R$, with the convergence uniform in $t\in\R$. Furthermore, we have
	\begin{align*}
		\lim_{t \to \pm \infty} \left\| \mathbf{v}(t) - \sum_{n=1}^\infty \left(\qv_{v_n}(x-v_nt)-\uu_\infty\right)  \right\|_{L^\infty} = 0.
	\end{align*}
\end{theorem}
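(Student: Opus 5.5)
The proof has two parts: first show that the soliton series converges uniformly, then upgrade the $\dot H^{1/2}$ convergence of Theorem \ref{infinitysolitons} to $L^\infty$ through a trace norm bound on the Hankel operator of the error.

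\emph{Uniform convergence of the series.} Each ground state profile $\qv_{v_n}-\uu_\infty=\frac{\sv_n}{x-z_n}+\frac{\ov{\sv_n}}{x-\ov{z_n}}$ is rational of degree one. Its sup norm is controlled by $|\sv_n|/|\operatorname{Im} z_n|$. From the constraint $\sv_n\cdot(\qv_\infty+\ov{\sv_n}/(z_n-\ov{z_n}))=0$ and $\sv_n\cdot\sv_n=0$, one gets $|\sv_n|/|\operatorname{Im}z_n|\lesssim 1$. The energy identity $E(\qv_{v_n})=(1-v_n^2)\pi$ fixes the scale, and I expect it to give the sharper bound $\|\qv_{v_n}-\uu_\infty\|_{L^\infty}\lesssim\sqrt{1-v_n^2}$. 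Alternatively, this bound follows from Peller's theorem: the Hankel operator of a degree-one rational symbol has rank one, with trace norm comparable to its Hilbert--Schmidt norm, which is $\sim E^{1/2}\sim\sqrt{1-v_n^2}$. Then the Besov embedding $B^1_{1,1}\hookrightarrow L^\infty$ for the analytic part gives the claim. Since sup norms are translation invariant, the Weierstrass M-test with $\sum\sqrt{1-v_n^2}<\infty$ yields absolute convergence, uniformly in $x$ and $t$.

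\emph{$L^\infty$ convergence of the error.} Let $\mathbf w_m(t)=\mathbf v(t)-\sum_{n\le m}(\qv_{v_n}(\cdot-v_nt)-\uu_\infty)$. By the uniform tail bound, it suffices to show
\[
\lim_{m\to\infty}\limsup_{t\to\pm\infty}\|\mathbf w_m(t)\|_{L^\infty}=0.
\]
By Peller's theorem, $\|\Pi_+ g\|_{B^1_{1,1}}\simeq\|H_{g}\|_{\mathfrak S_1}$, and $B^1_{1,1}\hookrightarrow L^\infty$; the real-valuedness of $\mathbf v$ takes care of the $\Pi_-$ part. So I would bound the trace norm of the Hankel operator of $\mathbf w_m(t)\cdot\bm\sigma$. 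The key input is that $K_\Ub=\id-T_\Ub^2$ is a positive trace class operator whose nonzero spectrum is related to $1-v_n^2$ and to the continuous-spectrum contribution. The spectral summability condition, combined with Lemma \ref{spectral Linfty}, should bound the trace norm of the Hankel operator $H_\Ub$ (essentially $K_\Ub^{1/2}$) through a spectral truncation. I would split $H_{\Ub(t)}$ into its part on $\operatorname{span}\{$eigenfunctions $n\le m\}$, the part on the eigenfunctions with $n>m$ (trace norm $\lesssim\sum_{n>m}\sqrt{1-v_n^2}$), and the part on the continuous spectrum. The finite-rank part, via the explicit formula, converges as $t\to\pm\infty$ in trace norm (finite rank, so all norms are equivalent) to the Hankel operators of the $m$ solitons; this is where the distorted Plancherel formula and the weak profile identification from Theorem \ref{infinitysolitons} enter.

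\textbf{Main obstacle.} The hardest step is the continuous spectrum part. Theorem \ref{infinitysolitons} gives $\dot H^{1/2}$, i.e. Hilbert--Schmidt, decay of the radiation, but the trace norm does not follow from that. My first attempt would use the positivity of $K_\Ub$ and conservation of $\Tr K_{\Ub(t)}=\frac1{4\pi}\|\Ub\|^2_{\dot H^{1/2}}$. The $\dot H^{1/2}$ resolution shows that the continuous part of $\Tr K$ tends to zero. For a positive operator, the trace of the continuous-spectrum compression controls its trace norm, and a Cauchy--Schwarz argument on the off-diagonal blocks then controls $\|H\|_{\mathfrak S_1}$ by $\sqrt{\text{rank}}\cdot\|\cdot\|_{\mathfrak S_2}$ plus tails. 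Combining the three pieces and letting $t\to\pm\infty$, then $m\to\infty$, gives the $L^\infty$ convergence.
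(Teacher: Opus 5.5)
\textbf{Gap in the second part.} Your first part is fine. For a one-pole profile, the $\dot H^{1/2}$ norm is a fixed multiple of $|\sv_n|/|\operatorname{Im}z_n|$, so the energy gives $\|\qv_{v_n}-\uu_\infty\|_{L^\infty}\lesssim\sqrt{1-v_n^2}$, and the M-test applies. (The paper gets $4\sqrt{1-v_n^2}$ in matrix form from $|A_n^\infty|=2\sqrt{1-v_n^2}\,\delta_n^\infty$.)

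The gap sits exactly at what you call the main obstacle, and it comes from misreading the spectral picture. $H_{\Ub(t)}$ has no ``continuous-spectrum part''. The operator $K_{\Ub(t)}=H_{\Ub(t)}^*H_{\Ub(t)}$ is compact, and the subspace where $T_{\Ub(t)}^2=\id$ (which carries the essential spectrum $\pm1$) is exactly $\ker K_{\Ub(t)}=\ker H_{\Ub(t)}$. By $T_{\Ub}^2=\id-K_{\Ub}$ and conservation of the discrete spectrum, the nonzero singular values of $H_{\Ub(t)}$ are exactly $a_n=\sqrt{1-v_n^2}$ for every $t$. Hence $\|H_{\Ub(t)}\|_{\mathfrak S_1}=\sum_n a_n$, with no radiation term.

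The argument you sketch for that piece would fail anyway. Since $\Tr K=\|H\|_{\mathfrak S_2}^2$, positivity of $K$ and a small trace for a compression of $K$ only control $H$ in Hilbert--Schmidt norm. But $\|H\|_{\mathfrak S_1}=\Tr K^{1/2}$, and the bound $\sqrt{\rank}\cdot\|\cdot\|_{\mathfrak S_2}$ is only available on finite-rank pieces.

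Finally, your claim that $H_{\Ub(t)}P_m(t)-\sum_{n\le m}H_n(t)\to0$ in trace norm is not supported by what you cite. Lemma \ref{PiVlim} and the distorted Plancherel formula describe weak limits of $\Pi_+\mathbf V(t)$, not the moving eigenfunctions $\mathcal O_{tT_{\Ub_0}}\varphi_n$. For the $m$-soliton error $\mathbf r_m(t)=\mathbf V(t)-\sum_{n\le m}\Rb_{v_n}(\cdot-v_nt)$, Theorem \ref{infinitysolitons} only gives $\lim_{t\to\pm\infty}\|H_{\mathbf r_m(t)}\|_{\mathfrak S_2}^2=\sum_{n>m}a_n^2$, which is not zero for fixed $m$.

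\textbf{The missing idea.} You need to decouple the truncation parameter from the object you compare against, as the paper does. Compare with the full series $\mathbf S(t)=\sum_n\Rb_{v_n}(\cdot-v_nt)$. Then $C(t)=H_{\mathbf V(t)-\mathbf S(t)}$ satisfies $\|C(t)\|_{\mathfrak S_2}\sim\|\mathbf V(t)-\mathbf S(t)\|_{\dot H^{1/2}}\to0$.

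For any $N$, let $A_N(t)$ keep the singular values $a_1,\dots,a_N$ of $H_{\Ub(t)}$, and set $B_N(t)=\sum_{n\le N}H_n(t)$. Each $H_n(t)$ has rank one with singular value $a_n$. So $D_N=A_N-B_N$ has rank at most $2N$, and $\|C-D_N\|_{\mathfrak S_1}\le2\sum_{n>N}a_n$. Ky Fan's inequality $s_{2N+j}(C)\le s_j(C-D_N)$, plus Cauchy--Schwarz on the first $2N$ singular values, gives
$\|C(t)\|_{\mathfrak S_1}\le\sqrt{2N}\|C(t)\|_{\mathfrak S_2}+2\sum_{n>N}a_n$.
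No information on how $A_N(t)$ compares with $B_N(t)$ is needed. Let $t\to\pm\infty$, then $N\to\infty$, and finish with Peller's theorem and $\dot B^1_{1,1}\hookrightarrow L^\infty$ on $C_0$.

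Your $m$-truncation can also be rescued without any eigenfunction asymptotics. Order the $a_n$ decreasingly. The triangle inequality gives $\limsup_{t\to\pm\infty}\|H_{\mathbf r_m(t)}\|_{\mathfrak S_1}\le 2\sqrt{2m}\,(\sum_{n>m}a_n^2)^{1/2}+\sum_{n>m}a_n$, and $m\sum_{n>m}a_n^2\le(\sum_{n}a_n)\sum_{n>m}a_n\to0$.
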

\begin{remark}
For a more general statement, see Propositions \ref{infinitysolitonsLinfty general} in Section \ref{p of 2}, where we establish this soliton resolution for the generalized half-wave maps equation. For more details about this equation, see Section \ref{Preliminaries}.
\end{remark}
\begin{remark}
In view of the requirements of soliton resolution, the initial data considered in Theorems \ref{infinitysolitons} and \ref{infinitysolitonsLinfty} are equipped with the spectral condition. We will show that such admissible non-rational data not only exist but also form a dense subset of the energy space $\dot{H}^{1/2}(\R;\Ss^2)$.
\end{remark}
Let $\uu\in {H}_\bullet^{1/2}(\R;\Ss^2)$, $\Ub=\uu\cdot\bm{\sigma}$, and the associated operator
\begin{align*}
K_{\Ub}:L^2_+(\R; \C^2)\to L^2_+(\R; \C^2),\enspace f\mapsto K_{\Ub}f:=H^*_{\Ub}H_{\Ub}f.
\end{align*}
The following result provides the existence of the admissible non-rational data.
\begin{theorem}[\textbf{Examples of the admissible non-rational data}]\label{finite to infinite}
Let
\begin{align*}
\uu_{*,0}=\uu_\infty+\mathbf v_{*,0}\in\mathcal{R}at(\R;\Ss^2),\enspace \Ub_{*,0}=\uu_{*,0}\cdot\bm\sigma,
\end{align*}
and suppose that all positive eigenvalues of \(K_{\Ub_{*,0}}\) are simple. Then for every \(m\in\N^*\) and every
\(\varepsilon>0\), there exists
\begin{align*}
\uu_0=\uu_\infty+\mathbf v_0\in H^\infty_\bullet(\R;\Ss^2),\enspace \Ub_0=\uu_0\cdot\bm\sigma,
\end{align*}
such that
\begin{align*}
\left\|\mathbf v_0-\mathbf v_{*,0}\right\|_{H^m}<\varepsilon,
\end{align*}
and \(K_{\Ub_0}\) has infinite simple positive spectrum $(\lambda_n)_{1\leq n\leq\infty}$, which satisfies that
\begin{align*}
\sum_{n=1}^\infty\sqrt{\lambda_n}<\infty.
\end{align*}
Furthermore, the global solution $\uu$ for \eqref{HWMv} with $\uu(0)=\uu_0$ is well defined in $C(\R; H^\infty_\bullet(\R;\Ss^2))$.
\end{theorem}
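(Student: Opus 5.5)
The plan is to construct $\uu_0$ as the limit of an iterated sequence of rational data $\uu^{(j)}_0=\uu_\infty+\mathbf v^{(j)}_0\in\mathcal{R}at(\R;\Ss^2)$, with $\uu^{(0)}_0=\uu_{*,0}$. Each $K_{\Ub^{(j)}_0}$ has finitely many simple positive eigenvalues $\lambda^{(j)}_1>\dots>\lambda^{(j)}_{N_0+j}$, and passing from $j$ to $j+1$ adds exactly one new small eigenvalue. The basic step is a one-step perturbation lemma. Given rational $\uu_0$ with simple positive spectrum of $K_{\Ub_0}$, a tolerance $\delta>0$ and a target size $\mu>0$, I will produce rational $\widetilde\uu_0$ with the same $\uu_\infty$ satisfying the following:
\begin{itemize}
\item[(a)] $\|\widetilde{\mathbf v}_0-\mathbf v_0\|_{H^{m+j}}<\delta$;
\item[(b)] $\|H_{\widetilde\Ub_0}-H_{\Ub_0}\|_{\mathfrak S_1}<\delta$;
\item[(c)] $K_{\widetilde\Ub_0}$ has simple positive spectrum consisting of the old eigenvalues moved by less than $\delta$, together with one new eigenvalue in $(0,\mu)$.
\end{itemize}
For the construction I would take a ground state type factor, i.e. a degree-one Blaschke bubble, concentrated at a very large spatial position $x_j$ (or with very small amplitude parameter), and glue it into $\Ub_0$ via the Grassmannian/sphere structure. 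The natural choice is to rotate the constant background inside a small region, $\widetilde\Ub_0=\Ub_0+(\text{rational correction})$, preserving the target constraint exactly by composing with a rational map $\R\to\Ss^2$ close to $\uu_\infty$. Since Hankel operators of rational symbols are finite rank (Kronecker), $K_{\widetilde\Ub_0}$ remains finite rank and its rank goes up by one. Far separation makes the Hankel operators nearly orthogonal, so the old eigenvalues move little, and the new eigenvalue is roughly $|\sv|^2/(\Im z)^2$-small. Simplicity of the old spectrum is preserved by analytic perturbation theory for $\delta$ below half the spectral gaps. The new eigenvalue is chosen distinct from the others by adjusting $\mu$.

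The key quantitative point is to take $\mu_j$ so that $\sum_j\sqrt{\mu_j}<\infty$, e.g. $\sqrt{\mu_j}\le\varepsilon 2^{-j}$, and to take $\delta_j\le\varepsilon 2^{-j-1}$, smaller also than a quarter of the minimal gap among the current eigenvalues. Then $\mathbf v^{(j)}_0$ is Cauchy in every $H^{m'}$. This is where the increasing index $m+j$ in (a) enters: it yields a limit $\mathbf v_0\in H^\infty$ with $\|\mathbf v_0-\mathbf v_{*,0}\|_{H^m}<\varepsilon$, and $\uu_0$ is $\Ss^2$-valued by a.e. convergence. The trace-norm control (b) gives $H_{\Ub^{(j)}_0}\to H_{\Ub_0}$ in $\mathfrak S_1$, so the singular values converge in $\ell^1$ (Lidskii/Ky Fan). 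Hence $\sqrt{\lambda_n}=\lim_j\sqrt{\lambda^{(j)}_n}$, each eigenvalue moves at most $\sum_{i\ge n}\delta_i$, which keeps them separated and therefore simple, and $\sum_n\sqrt{\lambda_n}\le \|H_{\Ub_0}\|_{\mathfrak S_1}<\infty$. Infinitely many eigenvalues are positive, so $\uu_0$ is non-rational by the Kronecker-type theorem. Global existence in $C(\R;H^\infty_\bullet)$ follows from Theorem \ref{global energy} together with propagation of regularity. My intended argument for the latter is that the $H^s$ bounds for rational data in Proposition \ref{ylczgzfj} depend on $\uu_0$ through controlled quantities, or alternatively that local well-posedness in $H^s_\bullet$, $s>3/2$, from \cite{gerard2025global} combined with the $H^{1/2}$ global solution and a persistence-of-regularity argument via the explicit formula excludes blow-up.

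The main obstacle is the one-step lemma, specifically doing (b) and (c) simultaneously while keeping the sphere constraint and rationality. I would first try inserting a far-away small bubble via the Cayley picture and estimating $\|H_{\widetilde\Ub_0}-H_{\Ub_0}\|_{\mathfrak S_1}$ through Peller's theorem by the $B^1_{1,1}$ norm of the difference, which is small in terms of the bubble amplitude. The positive-eigenvalue count would be read off from the rank of $H$, which rises by exactly one for a degree-one factor. The last step, global $H^\infty$ existence for non-rational data, may need a separate argument if the a priori bounds of Proposition \ref{ylczgzfj} are not uniform.
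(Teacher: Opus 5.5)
Your architecture matches the paper's. You iterate rational data, adding one simple small eigenvalue of $K$ per step, and control the data differences in Sobolev norms of increasing order and the Hankel differences in $\mathfrak S_1$. You then get simplicity from spectral gaps and $\sum\sqrt{\lambda_n}=\|H_{\Ub_0}\|_{\mathfrak S_1}<\infty$ from trace-norm convergence.

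There is a genuine gap at the last assertion: you give no proof that the solution lies in $C(\R;H^\infty_\bullet(\R;\Ss^2))$, and neither route you suggest is available. The constants $C(\uu_0,s)$ in Proposition \ref{ylczgzfj} come from the finite-dimensional reduction of one particular rational datum. Nothing makes them uniform along a sequence whose rank tends to infinity. There is also no persistence-of-regularity mechanism for the $H^{1/2}_\bullet$ flow: the conserved quantities control only critical, Besov/$\dot H^{1/2}$-level norms, which is exactly the obstruction stressed in the introduction.

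The missing idea is to build the time evolution into the iteration. At step $n$, let $\uu^{(n)}$ be the global rational solution. Finite-interval continuous dependence in $H^s$, $s>3/2$, gives some $\delta_n>0$. You get it by iterating Proposition \ref{local explicit} over a partition of $[0,n]$ and using the symmetry $\uu\mapsto-\uu(-t)$ for negative times. With this $\delta_n$, any rational datum within $\delta_n$ of $\uu^{(n)}_0$ in $H^{m+n+2}$ has a solution within $\varepsilon2^{-n-2}$ of $\uu^{(n)}$ in $H^{m+n+2}$ on $|t|\le n$. Impose $\|\mathbf v_0^{(n+1)}-\mathbf v_0^{(n)}\|_{H^{m+n+2}}<\delta_n$ in addition to the spectral requirements. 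The rational solutions are then Cauchy in $C([-T,T];H^k)$ for every $T$ and $k$. The limit solves the Duhamel form of the equation by the $H^{k-1}$ product estimate, and no uniform-in-time bound is ever needed.

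Your one-step lemma is also left unproved at its crucial point. The far-away-bubble option cannot work: translating a bubble of fixed size makes neither its $H^{m+j}$ norm nor the trace norm of its Hankel operator small. Only the small-amplitude variant is viable, and then ``the rank rises by exactly one'' needs an algebraic reason, not a heuristic.

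The paper writes $\uu_{*,0}=\mathscr S(P,Q)$ with $P,Q$ coprime, $\deg P=N=\rank K_{\Ub_{*,0}}$ and $\deg Q\le N-1$, citing \cite[Theorem 8.1]{gerard2025global}. It then replaces $P/Q$ by $P/Q+\eta/(x-\zeta)$ with $\zeta\notin\R$ and $Q(\zeta)\neq0$. The polynomials $(x-\zeta)P+\eta Q$ and $(x-\zeta)Q$ remain coprime, with degrees $N+1$ and at most $N$, so the rank is exactly $N+1$. Finite rank then gives the trace-norm control directly, $\|H_{\Ub_\eta}-H_{\Ub_{*,0}}\|_{\mathfrak S_1}\le(2N+1)\|\Ub_\eta-\Ub_{*,0}\|_{L^\infty}$, without Peller's theorem.

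Finally, for simplicity in the limit, the condition $\delta_i\le\Gamma_i/4$ alone does not bound the cumulative drift $\sum_{i\ge j}\delta_i$ by a fraction of $\Gamma_j$. Use geometrically weighted conditions such as $\|K_{n+1}-K_n\|_{\mathrm{op}}<2^{-n-5}\min_{r\le n}\Gamma_r$.
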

\begin{remark}
By means of the identity \(T_{\Ub}^2=\id-K_{\Ub}\) \cite{gerard2025global}, together with the properties of \(K_{\Ub_0}\) in Theorem \ref{finite to infinite}, it follows that the discrete spectrum \((v_n)_{1\le n\le\infty}\) of \(T_{\Ub_0}\) is countable, simple, and satisfies
\[
	\sum_{n=1}^\infty \sqrt{1-v_n^2}<\infty.
\]
Therefore, we first obtain the density of the admissible non-rational data in a class of rational data.
\end{remark}
Since the set
\begin{align*}
\mathcal Rat_{\mathrm{ss}}(\R;\Ss^2):=\left\{\uu\in\mathcal Rat(\R;\Ss^2):K_\Ub\text{ has simple positive spectrum}\right\}
\end{align*}
is dense in \(\dot H^{1/2}(\R;\Ss^2)\) \cite{gerard2025global}, we obtain the following corollary of Theorem \ref{finite to infinite}.
\begin{corollary}
The admissible non-rational data are dense in \(\dot{H}^{1/2}(\mathbb{R};\mathbb{S}^2)\).
\end{corollary}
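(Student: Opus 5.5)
The plan is to derive the corollary from Theorem \ref{finite to infinite}, using the density of $\mathcal Rat_{\mathrm{ss}}(\R;\Ss^2)$ in $\dot H^{1/2}(\R;\Ss^2)$ from \cite{gerard2025global}. Fix $\uu\in\dot H^{1/2}(\R;\Ss^2)$ and $\delta>0$. First choose $\uu_{*,0}\in\mathcal Rat_{\mathrm{ss}}(\R;\Ss^2)$ with $d(\uu,\uu_{*,0})<\delta/2$. Here $d$ is the distance on $\dot H^{1/2}(\R;\Ss^2)$ used in \cite{gerard2025global}: the $\dot H^{1/2}$ seminorm plus the weighted $L^2$ term $\int|\cdot|^2(1+x^2)^{-1}dx$ from the remark after Theorem \ref{global energy}. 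The rational map $\uu_{*,0}$ has some limit $\uu_\infty$ at infinity, and $\mathbf v_{*,0}=\uu_{*,0}-\uu_\infty\in H^\infty$.

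Next I apply Theorem \ref{finite to infinite} to $\uu_{*,0}$ with $m=1$ and a small $\varepsilon$. It yields $\uu_0=\uu_\infty+\mathbf v_0\in H^\infty_\bullet(\R;\Ss^2)$ with $\|\mathbf v_0-\mathbf v_{*,0}\|_{H^1}<\varepsilon$. Since the background $\uu_\infty$ is the same, we have $\uu_0-\uu_{*,0}=\mathbf v_0-\mathbf v_{*,0}$. Hence
\begin{align*}
\|\uu_0-\uu_{*,0}\|_{\dot H^{1/2}}^2+\int_\R\frac{|\uu_0-\uu_{*,0}|^2}{1+x^2}\,dx\le 2\|\mathbf v_0-\mathbf v_{*,0}\|_{H^1}^2<2\varepsilon^2,
\end{align*}
so choosing $\varepsilon$ small gives $d(\uu_0,\uu)<\delta$ by the triangle inequality.

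It remains to check that $\uu_0$ is admissible non-rational data. By the remark after Theorem \ref{finite to infinite}, the identity $T_{\Ub_0}^2=\id-K_{\Ub_0}$ converts the spectral conclusions for $K_{\Ub_0}$ into those for $T_{\Ub_0}$. The positive eigenvalues $\lambda_n$ of $K_{\Ub_0}$ correspond to discrete eigenvalues $v_n$ of $T_{\Ub_0}$ with $v_n^2=1-\lambda_n$.

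1. Countability and the condition $\sum\sqrt{1-v_n^2}<\infty$ follow directly from $\sum\sqrt{\lambda_n}<\infty$.
2. Simplicity needs care. The eigenspace of $K_{\Ub_0}$ for $\lambda_n$ is the sum of the eigenspaces of $T_{\Ub_0}$ for $\pm\sqrt{1-\lambda_n}$. Since $\lambda_n$ is simple for $K_{\Ub_0}$, at most one of $\pm\sqrt{1-\lambda_n}$ is an eigenvalue of $T_{\Ub_0}$, and it is simple. So the discrete spectrum of $T_{\Ub_0}$ is simple.
3. Non-rationality holds because the spectrum of $K_{\Ub_0}$ is infinite, so the Hankel operator $H_{\Ub_0}$ has infinite rank. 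By Kronecker's theorem, as recalled before Theorem \ref{finite to infinite}, $\uu_0\notin\mathcal Rat(\R;\Ss^2)$.
4. Finally, $\uu_0\in H^{1/2}_\bullet$, so all hypotheses of Theorem \ref{infinitysolitons} hold.

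The only delicate point is the topology. Density in $\dot H^{1/2}(\R;\Ss^2)$ has to be understood in the same metric in which $\mathcal Rat_{\mathrm{ss}}$ is dense, and the $H^1$ control from Theorem \ref{finite to infinite} must be shown to dominate that metric. This is where the common background $\uu_\infty$ is essential, since it makes $\uu_0-\uu_{*,0}$ an $L^2$ function. Everything else is bookkeeping.
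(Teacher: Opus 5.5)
Your proposal is correct and takes the same route as the paper, which obtains the corollary by combining the density of $\mathcal Rat_{\mathrm{ss}}(\R;\Ss^2)$ from \cite{gerard2025global} with Theorem \ref{finite to infinite}, then using $T_{\Ub}^2=\id-K_{\Ub}$ (and Kronecker's theorem for non-rationality) to confirm admissibility. Your observation that the common background $\uu_\infty$ lets the $H^1$ bound dominate the $\dot H^{1/2}$ distance is exactly the detail the paper leaves implicit, and in your simplicity step exactly one of $\pm\sqrt{1-\lambda_n}$ is an eigenvalue, not merely at most one, which does not affect the conclusion.
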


The structure of the paper is as follows. In Section \ref{Preliminaries}, we recall some known preliminaries concerning \eqref{HWMv}, including the generalized half-wave maps equation, the Lax operator and its properties, well-posedness, and explicit formulas. In Section \ref{p of 0}, we establish the isometry property of the transform in the explicit formula, prove continuous dependence, and complete the proof of Theorem \ref{global energy}. In Section \ref{sec illposedness}, we prove the matching ill-posedness theorem for $s<1/2$. In Section \ref{section eigenfunctions}, we establish some foundational results, including a proof that the eigenfunctions of the Toeplitz operator belong to the domain of $X^{*}$, as well as certain identities derived from this fact, such as the distorted Plancherel formula. In Section \ref{section weak}, we use explicit formulas to construct weak convergence limits of solutions with general initial data and prove that the limiting profiles constitute ground state solitary waves. In Section \ref{p of 1}, we estimate the cross terms and remainders for finite soliton sums to prove Theorem \ref{infinitysolitons}. In Section \ref{p of 2}, we study Hankel singular values and derive a trace norm estimate by finite-rank truncation, proving Theorem \ref{infinitysolitonsLinfty}. Finally, in Section \ref{p of 3}, we construct a perturbation that increases the rank by one while preserving spectral simplicity and establish a stability result on bounded time intervals to prove Theorem \ref{finite to infinite}.

\section{Preliminaries}\label{Preliminaries}
We recall the matrix formulation, Lax operator, and explicit formula for \eqref{HWMv}.

Let $d\geq2$. We consider the generalized half-wave maps equation \cite{berntson2022spin,gerard2018a,gerard2025global}
\begin{align}\label{HWMm}\tag{HWM$_d$}
	\partial_t\Ub=-\frac{i}{2}[\Ub,|D|\Ub],\enspace \Ub: [0,T)\times\R \to \bigcup_{k=0}^{d}\Gr_k(\mathbb{\C}^d),
\end{align}
where
$$
\Gr_k(\C^d) \cong \left \{ U \in M_d(\C) \mid \mbox{$U=U^*, \, U^2 = \mathbbm{1}_{d}$ and $\Tr(U) = d- 2k$} \right \}.
$$
G\'{e}rard and Lenzmann \cite{gerard2025global} systematically investigated \eqref{HWMm}, which is a more convenient object to study than \eqref{HWMv}. They first established global well-posedness, soliton resolution, and non-turbulence for \eqref{HWMm} with rational initial data, and then obtained the corresponding results for \eqref{HWMv}, which is a special case of \eqref{HWMm}. In fact, since the property $\Ub(x)\in \Gr_k(\C^d)$ for a.e. $x\in\R$ is formally preserved along the flow of \eqref{HWMm}, one can restrict the target of \eqref{HWMm} to $\Gr_k(\C^d)$ for any given $k=0,1,\ldots,d$. In particular, for the target $\Gr_1(\C^2)$, \eqref{HWMm} is equivalent to the standard \eqref{HWMv} through the following relation:
\begin{align}\label{relation}
	\Ub=\uu\cdot\bm\sigma=\sum_{k=1}^3u_k\sigma_k=\begin{pmatrix}
		u_3 & u_1-iu_2 \\
		u_1+iu_2 & -u_3
	\end{pmatrix}\in \Gr_1(\C^2),\enspace \uu=\left(u_1,u_2,u_3\right)^\top\in \mathbb{S}^2,
\end{align}
where $\sigma_1$, $\sigma_2$, and $\sigma_3$ are the standard Pauli matrices.

The Hamiltonian energy functional of \eqref{HWMm} is \cite{gerard2025global}
$$
E(\Ub)=\frac{1}{2}\|\Ub\|_{\dot{H}^\frac{1}{2}}^2=\frac{1}{4\pi}\int_{\R}\int_{\R}\frac{|\Ub(x)-\Ub(y)|_{F}^2}{|x-y|^2}dxdy.
$$
For $d=2$, $k=1$, \eqref{relation} gives $\|\Ub\|_{\dot H^s}=\sqrt2\|\uu\|_{\dot H^s}$ for every $s>0$.

For the speed $v\in(-1,1)$, \eqref{HWMm} admits traveling solitary waves of the form \cite{gerard2025global,lenzmann2018on}
\begin{align}\label{solitonm}
	\Ub_v(t,x) = \Qv_v(x-vt),
\end{align} $\Qv_{v}(x) = \Ub_\infty + \frac{A}{x- y + \ii \delta} + \frac{A^*}{x-y- \ii \delta}$, where $\Ub_\infty\in \Gr_k(\C^d)$, $y\in\R$, $\delta>0$, and $A\in M_d(\C)$ satisfy
$$
A\neq0,\enspace \rank (A)=1,\enspace A^2=0,\enspace A ( \Ub_\infty + \frac{\ii A^*}{2\delta} ) + ( \Ub_\infty + \frac{\ii A^*}{2\delta} ) A = 0.
$$
For $d=2$, $k=1$, \eqref{relation} identifies \eqref{solitonm} with the ground state solitary wave \eqref{solitonv}.

Let $s\geq1/2$,
\begin{align}\label{stronginitial 1}
	\Ub(x)= \Ub_\infty + \mathbf{V}(x)\in M_d(\C) \oplus H^s(\R; M_d(\C)),
\end{align}
\begin{align}\label{stronginitial 2}
	\Ub(x)\in \Gr_k(\C^d),\enspace \text{for a.e. }x\in\R.
\end{align}
Then the Toeplitz operator
$$
T_{\Ub}:L^2_+(\R; \mathcal{V})\to L^2_+(\R; \mathcal{V}),\enspace f\mapsto T_{\Ub}f:=\Pi_{+}(\Ub f)
$$
is self-adjoint with $\|T_{\Ub}\|\leq1$. Its spectrum is preserved along the classical flow of \eqref{HWMm} \cite{gerard2018a,gerard2025global}; \eqref{relation} gives the Lax operator for \eqref{HWMv}.

Let the Hankel operator $H_{\Ub}$ be
$$
H_{\Ub}:L^2_+(\R; \mathcal{V})\to L^2_-(\R; \mathcal{V}),\enspace f\mapsto H_{\Ub}f:=\Pi_{-}(\Ub f).
$$
Its adjoint is
$$
H^*_{\Ub}:L^2_-(\R; \mathcal{V})\to L^2_+(\R; \mathcal{V}),\enspace f\mapsto H^*_{\Ub}f:=\Pi_{+}(\Ub f).
$$
Then the operator
\begin{align}\label{KandH}
	K_{\Ub}:L^2_+(\R; \mathcal{V})\to L^2_+(\R; \mathcal{V}),\enspace f\mapsto K_{\Ub}f:=H^*_{\Ub}H_{\Ub}f
\end{align}
is self-adjoint, nonnegative, bounded, trace-class, and satisfies the identity \cite{gerard2025global}
\begin{align}\label{TandK}
	T_{\Ub}^2=\id-K_{\Ub}\enspace \text{on}\;\; L^2_+(\R; \mathcal{V}).
\end{align}
Define the closed subspaces
\begin{align}\label{subspace}
	\Hfr_0 := \ker(K_{\Ub}) \enspace \mbox{and} \enspace \Hfr_1 := \ov{\ran(K_{\Ub})},
\end{align}
so that \cite{gerard2025global} $L^2_+(\R; \mathcal{V})=\Hfr_0 \oplus \Hfr_1,\enspace \Hfr_0 \perp \Hfr_1$. In addition, both $\Hfr_0$ and $\Hfr_1$ are invariant subspaces of the operator $T_{\Ub}$. Moreover, $\Hfr_0=\ker(H_{\Ub})$ and $\Hfr_1=\ov{\ran(H^*_{\Ub})}$.

The spectrum of $T_{\Ub}$ can be decomposed as \cite{gerard2025global} $\sigma(T_{\Ub})=\sigma_\text{e}(T_{\Ub})\cup\sigma_\text{d}(T_{\Ub})$, where the essential spectrum ${\sigma }_{\mathrm{e}}\left( {T}_{\Ub}\right) \subset \{ \pm 1\}$, and the discrete spectrum
$$
{\sigma }_{\mathrm{d}}\left( {T}_{\Ub}\right)  = {\sigma }_{\mathrm{d}}\left( {\left. {T}_{\Ub}\right| }_{{\mathfrak{H}}_{1}}\right)  = \left\{  {{\varepsilon }_{n}\sqrt{1 - {\lambda }_{n}} \mid  n = 1,\ldots ,\omega}\right\}\subset(-1,1),\enspace \omega=\operatorname{rank}\left( {K}_{\Ub}\right)\in\mathbb{N}\cup\{\infty\},
$$
where for any $n$, one has ${\varepsilon }_{n}\in\{\pm 1\}$. In addition, $\left\{ {\lambda }_{n}\right\}_{1\leq n\leq\omega}  \subset  (0,1\rbrack$ is the sequence of discrete spectrum points of the compact self-adjoint operator $K_{\Ub}$.

The operator $X^*$ is defined as
$$
X^*:\dom(X^*)\to L^2_+(\R; \mathcal{V}),\enspace f\mapsto X^*f,
$$
where
$$
\dom(X^*) = \left\{f\in L^2_+(\R; \mathcal{V}) \;\big|\; \widehat{f} \in H^1(\R_+; \mathcal{V}) \right\},\enspace \widehat{X^* f}(\xi) := \ii \frac{\mathrm{d}}{\mathrm{d} \xi} \widehat{f}(\xi)\mathbbm{1}_{\xi\geq0}.
$$
The operator $I_+$ is defined as $I_+:\dom(X^*)\to \mathcal{V},\enspace f\mapsto I_+(f):= \widehat{f}(0^+)$. The local well-posedness and explicit formula of \eqref{HWMm} are given by the following proposition.
\begin{proposition}{[G\'{e}rard and Lenzmann \cite{gerard2025global}]}\label{local explicit}
	Let $\Ub_0=\Ub_\infty+\mathbf{V}_0$ satisfy the conditions \eqref{stronginitial 1} and \eqref{stronginitial 2} with $s>3/2$. For any $R>0$, there exists $T=T(R)>0$, such that for every such $\Ub_0=\Ub_\infty+\mathbf{V}_0$ satisfying $\|\mathbf{V}_0\|_{H^s}<R$, there exists a unique solution $\Ub\in C([0,T];H^s_\bullet(\R;\Gr_k(\C^d)))$ to \eqref{HWMm} with $\Ub(0)=\Ub_0$. Moreover, this solution has the form
	$\Ub(t)= \Ub_\infty + \mathbf{V}(t)\in M_d(\C) \oplus C([0,T];H^s(\R; M_d(\C)))$.
	In addition,
	\begin{align}\label{explicit formula}
		\Pi_+ \mathbf{V}(t, z) = \frac{1}{2\pi \ii} I_+ \big[ (X^* + t T_{\Ub_0} - z \id)^{-1} \Pi_+ \mathbf{V}_0 \big],\enspace \forall t\in [0,T],\enspace z\in\C_+.
	\end{align}
\end{proposition}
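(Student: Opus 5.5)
The plan has two parts: first local well-posedness in $H^s_\bullet$ for $s>3/2$ by an energy method, then the explicit formula \eqref{explicit formula} via the Lax pair and a conjugation identity for $X^*$.

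\emph{Local well-posedness.} Write $\Ub=\Ub_\infty+\mathbf V$. The equation \eqref{HWMm} becomes
\begin{align*}
\partial_t\mathbf V=-\tfrac{\ii}{2}\left([\Ub_\infty,|D|\mathbf V]+[\mathbf V,|D|\mathbf V]\right),
\end{align*}
which is a first-order nonlocal quasilinear system.
\begin{itemize}
\item[(a)] Regularize, for instance by $\mathbf V\mapsto P_{\le N}\mathbf V$ or by a parabolic term $-\epsilon|D|^2$.
\item[(b)] Prove $H^s$ energy estimates uniform in the regularization. The highest-order term $\langle \langle D\rangle^s[\mathbf V,|D|\mathbf V],\langle D\rangle^s\mathbf V\rangle$ is handled by the Kato--Ponce commutator estimate. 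The term in which all derivatives fall on $|D|\mathbf V$ is $\Tr\bigl(\langle D\rangle^s\mathbf V^*[\mathbf V,|D|\langle D\rangle^s\mathbf V]\bigr)$. Because $\mathbf V$ is Hermitian, this is purely imaginary up to a commutator $[\mathbf V,|D|]$. That commutator is bounded by $\|\partial_x\mathbf V\|_{L^\infty}\lesssim\|\mathbf V\|_{H^s}$, and this is exactly where $s>3/2$ enters.
\item[(c)] The resulting bound $\frac{d}{dt}\|\mathbf V\|_{H^s}^2\lesssim(1+\|\mathbf V\|_{H^s})\|\mathbf V\|_{H^s}^2$ gives a time $T(R)$.
\item[(d)] Obtain existence by compactness, and uniqueness and continuity by an $L^2$ difference estimate combined with the Bona--Smith argument.
\item[(e)] Preserve the constraint $\Ub^2=\mathbbm 1$: the quantity $\Ub^2-\mathbbm 1$ satisfies a linear equation, so a Gronwall estimate in $L^2$ shows it stays zero. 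The trace is conserved trivially.
\end{itemize}

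\emph{Explicit formula, step 1: a formula at fixed time.} For $f\in L^2_+$ and $z\in\C_+$ we have
\begin{align*}
f(z)=\frac{1}{2\pi\ii}I_+\big[(X^*-z)^{-1}f\big],
\end{align*}
since $(X^*-z)^{-1}$ has the explicit Fourier form $\widehat{g}(\xi)=-\ii\int_\xi^\infty \eu^{\ii z(\eta-\xi)}\widehat f(\eta)\,d\eta$. Hence
\begin{align*}
\Pi_+\mathbf V(t,z)=\frac{1}{2\pi\ii}I_+\big[(X^*-z)^{-1}\Pi_+\mathbf V(t)\big].
\end{align*}

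\emph{Step 2: the unitary propagator.} Let $W(t)$ solve $W'=B^+_{\Ub(t)}W$ with $W(0)=\id$. Since $B^+$ is skew-adjoint, $W(t)$ is unitary, and the Lax equation gives $T_{\Ub(t)}=W(t)T_{\Ub_0}W(t)^*$.

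\emph{Step 3: conjugating $X^*$.} I will compute the commutator $[X^*,B^+_\Ub]$ on a dense domain. Using $[X^*,D]=\ii\,\id$ and $[X^*,T_\Ub]f=\frac{\ii}{2\pi}\,\widehat{\mathbf V}$-type rank-one boundary terms, the goal is
\begin{align*}
[X^*,B^+_\Ub]=T_\Ub+\text{(terms annihilated after applying }I_+\text{ and pairing)}.
\end{align*}
From this, $\frac{d}{dt}\bigl(W^*X^*W\bigr)=W^*T_{\Ub}W=T_{\Ub_0}$, so that
\begin{align*}
W(t)^*X^*W(t)=X^*+tT_{\Ub_0}.
\end{align*}

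\emph{Step 4: the transported vector and the boundary functional.} I also need two further identities:
\begin{itemize}
\item $W(t)^*\Pi_+\mathbf V(t)=\Pi_+\mathbf V_0$, which should follow from $\partial_t\Pi_+\mathbf V=B^+\Pi_+\mathbf V$ up to the same correction (reflecting $\Pi_+\mathbf V=T_\Ub$-related structure);
\item $I_+\circ W(t)=I_+$ on the domain of $X^*$, obtained from $I_+(B^+f)=0$, i.e. $\widehat{B^+f}(0^+)$ vanishes.
\end{itemize}

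\emph{Conclusion, and the main obstacle.} Combining Steps 1--4, conjugating the resolvent and using the invariance of $I_+$ yields \eqref{explicit formula}. The main obstacle is Step 3 and the boundary bookkeeping in Step 4. The operators $X^*$ and $B^+$ are unbounded, so the commutator computation must be justified on $\dom(X^*)$, which $W(t)$ must preserve. The rank-one terms coming from $\widehat{\,\cdot\,}(0^+)$ must then be shown to cancel exactly against the inhomogeneous parts of the evolution of $\Pi_+\mathbf V$. I would first verify these identities on Schwartz-type data where all Fourier traces are explicit, and then extend them by density, using $s>3/2$ to control $\widehat{\mathbf V}$ near $\xi=0$.
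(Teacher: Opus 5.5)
\textbf{Overall.} The paper does not prove this proposition; it quotes it from \cite{gerard2025global}. Your route is the one behind that result: an energy method for $s>3/2$, followed by a unitary propagator $W(t)$ of $B^+_{\Ub}$ that conjugates $X^*$ to $X^*+tT_{\Ub_0}$ and fixes $I_+$. It also matches how the paper later uses the result in \eqref{rational explicit}. If you run Steps 1--4 with an arbitrary $F$ in place of $\Pi_+\mathbf V_0$, you get $W(t)F=\mathcal O_{tT_{\Ub_0}}F$, so the unitary operator appearing there is exactly your $W(t)$. The energy-method sketch for local well-posedness is standard and fine.

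\textbf{The gap: the two key identities must be exact.} You claim them only ``up to corrections'', and as written the argument then does not close. A remainder $R$ in $[X^*,B^+_\Ub]=T_\Ub+R$ would survive in
\begin{align*}
\tfrac{d}{dt}(W^*X^*W)=W^*T_\Ub W+W^*RW .
\end{align*}
That $R$ is killed by $I_+$ after pairing is irrelevant once it sits inside a resolvent. Likewise, a remainder in the evolution of $\Pi_+\mathbf V$ would destroy $W^*\Pi_+\mathbf V(t)=\Pi_+\mathbf V_0$.

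Both identities are in fact exact, and this is what you must prove. First use $[X^*,D]=\ii\,\id$ and $[X^*,T_\Ub]f=\frac{\ii}{2\pi}\Pi_+\mathbf V\,I_+(f)$, which follows by integrating by parts in $\widehat{\mathbf Vf}$ (cf.\ \eqref{pre wu}). This gives
\begin{align*}
[X^*,B^+_\Ub]f=T_\Ub f+\frac{1}{4\pi}\Pi_+\mathbf V\,I_+(Df)+\frac{1}{4\pi}\bigl(D\Pi_+\mathbf V-\Pi_+|D|\mathbf V\bigr)I_+(f)=T_\Ub f,
\end{align*}
since $I_+(Df)=0$ and $D\Pi_+=\Pi_+|D|$.

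For the transport identity, write $\mathbf V=P+M$ with $P=\Pi_+\mathbf V$ and $M=\Pi_-\mathbf V$. A direct computation gives $B^+_\Ub P=-\ii\Pi_+\bigl(\Ub\,DP+(DM)P\bigr)$. The target constraint, differentiated as $\Ub(D\mathbf V)+(D\mathbf V)\Ub=0$, then yields
\begin{align*}
\partial_tP-B^+_\Ub P=-\ii\,\Pi_+\bigl((DM)(\Ub_\infty+M)\bigr)=0,
\end{align*}
because this product has Fourier support in $(-\infty,0]$. This is where $\Ub^2=\mathbbm 1$ enters the explicit formula. The same frequency argument gives $I_+(B^+_\Ub f)=\ii\int_\R(D\Pi_+\mathbf V)f\,\mathrm dx=0$ for $f\in L^2_+$, which confirms your invariance of $I_+$.

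\textbf{Two smaller points.}

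First, the sign in Step 1 is wrong. The correct formula is $\widehat{(X^*-z\id)^{-1}f}(\xi)=\ii\int_\xi^\infty\eu^{\ii z(\eta-\xi)}\widehat f(\eta)\,\mathrm d\eta$, as in the proof of Lemma \ref{observation contraction}. With $-\ii$ you are inverting $z\id-X^*$ instead.

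Second, skew-symmetry alone does not give you $W(t)$. Since $B^+_\Ub=-\ii T_\Ub D-\ii T_{D\Pi_-\mathbf V}$ is an unbounded first-order generator, you need a hyperbolic evolution argument. For example, Kato's theory with $H^1_+$ as stable subspace works; it again uses $\partial_x\Ub\in L^\infty$, i.e.\ $s>3/2$. Preservation of $\dom(X^*)$ then comes for free if you argue at the level of resolvents. The operator $R(t)=W(t)^*(X^*-z\id)^{-1}W(t)$ solves $R'=-RT_{\Ub_0}R$ with $R(0)=(X^*-z\id)^{-1}$, hence $R(t)=(X^*+tT_{\Ub_0}-z\id)^{-1}$. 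This forces $W(t)\dom(X^*)=\dom(X^*)$.
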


\section{Proof of Theorem \ref{global energy}: Global well-posedness in $H^{1/2}_\bullet$}\label{p of 0}
We first approximate the initial data by rational maps.
\begin{lemma}\label{rational fixed background}
Let $s\geq1/2$ and $\Ub_0=\Ub_\infty+\mathbf V_0\in H^s_\bullet(\R;\Gr_k(\C^d))$. Then there exists a sequence $\Ub_{0,j}=\Ub_\infty+\mathbf V_{0,j}\in\mathcal{R}at(\R;\Gr_k(\C^d))$ such that $\|\mathbf V_{0,j}-\mathbf V_0\|_{H^s}\to 0$. \end{lemma}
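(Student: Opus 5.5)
We will approximate $\Ub_0$ by Grassmannian-valued rational maps with the \emph{same} value $\Ub_\infty$ at infinity, working on the circle through the Cayley transform and then correcting the value at infinity by a rational unitary gauge.

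\textbf{Step 1 (transfer to the circle).} Let $\kappa(x)=\frac{x-\ii}{x+\ii}$ map $\R\cup\{\infty\}$ onto $\mathbb T$, with $\infty\mapsto 1$. Set $W=\Ub_0\circ\kappa^{-1}$. The Gagliardo double integral for $\dot H^{1/2}$ is conformally invariant, so $W\in H^{1/2}(\mathbb T;\Gr_k(\C^d))$.

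\textbf{Step 2 (approximation on the circle).} Following \cite[Appendix A]{gerard2025global}, we approximate $W$ in $H^{1/2}(\mathbb T)$ by Grassmannian-valued maps that are rational in $e^{\ii\theta}$. One procedure is:
\begin{itemize}
\item take Fejér or Poisson means $W_\epsilon$; by VMO they satisfy $W_\epsilon^2\approx\mathbbm 1$ uniformly;
\item apply the smooth nearest-point projection onto $\Gr_k$, namely the spectral projection $P(W_\epsilon)=\frac{1}{2\pi\ii}\oint_{|\zeta-1|=1/2}(\zeta-W_\epsilon)^{-1}d\zeta$, followed by $2P-\mathbbm 1$;
\item approximate the resulting smooth $\Gr_k$-valued map by a rational one, for instance by approximating a smooth unitary frame with trigonometric polynomials and projecting again.
\end{itemize}
Pulling back by $\kappa$ gives $\widetilde\Ub_{j}\in\mathcal Rat(\R;\Gr_k)$ with $\|\widetilde\Ub_j-\Ub_0\|_{\dot H^{1/2}}\to0$.

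\textbf{Step 3 (fixing the value at infinity).} The approximants have limits $\widetilde\Ub_j(\infty)\to\Ub_\infty$. We choose unitaries $Q_j\to\mathbbm 1$ with $Q_j\widetilde\Ub_j(\infty)Q_j^*=\Ub_\infty$. Conjugation by a constant unitary preserves rationality, the Grassmannian constraint, and the $\dot H^{1/2}$ norm.

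\textbf{Step 4 (control of the $L^2$ part).} The convergence above is only in the $\dot H^{1/2}$ sense: on $\R$ it gives $\dot H^{1/2}$ convergence together with convergence in $L^2(dx/(1+x^2))$. The real obstacle is therefore upgrading to $\|\mathbf V_{0,j}-\mathbf V_0\|_{L^2}\to0$, i.e.\ controlling the behaviour near $x=\infty$ (near $1\in\mathbb T$) in $L^2$. Likewise, for $s>1/2$ the Cayley transform does not preserve $H^s$ weights, so the higher-regularity statement needs the same care.

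To handle this, we first localize. Write $\mathbf V_0=\chi_R\mathbf V_0+(1-\chi_R)\mathbf V_0$ and approximate $\Ub_0$ in $H^s_\bullet$ by a map $\Ub_0^{R}$ equal to $\Ub_\infty$ for $|x|>R$, produced as follows:
\begin{itemize}
\item form $\Ub_\infty+\chi_R\mathbf V_0$ (mollified if needed), which converges in $H^s$ as $R\to\infty$;
\item note that this map is close to $\Gr_k$ in $L^\infty$ outside a compact set, since $\mathbf V_0\in H^s$ with $s\ge1/2$ is VMO and small on large balls in average;
\item reproject with $P$.
\end{itemize}
For such compactly perturbed data, the circle picture has $W\equiv\Ub_\infty$ near $1$.

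We then carry out the Step 2 approximation, which is $H^{s}$-continuous on smooth maps, keeping the approximants flat of high order at $1$. This can be done by multiplying the trigonometric frame perturbation by $(e^{\ii\theta}-1)^N$ before projecting. The resulting rational maps on $\R$ have $\widetilde\Ub_j-\Ub_\infty=O(|x|^{-N})$ uniformly in $j$, so $\dot H^{1/2}$ (or $H^s$ on the circle) convergence plus uniform decay yields $H^s(\R)$ convergence via dominated tails.

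Step 4 is the part that requires genuine work: justifying the smallness of the reprojection error and the uniform tail bounds.
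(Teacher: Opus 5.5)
Your Step 4 has the same architecture as the paper's proof. You reduce to smooth $\Gr_k$-valued maps equal to $\Ub_\infty$ off a compact set (mollify, cut off, nearest-point projection). You then pass to the circle, approximate a global frame by trigonometric polynomials pinned at $1\in\mathbb T$, and use decay at infinity to get $H^s(\R)$ convergence. Steps 1--3 can be dropped. Also, $H^{1/2}(\mathbb T)$ convergence gives no control of the value at the single point $1$, so your claim $\widetilde\Ub_j(\infty)\to\Ub_\infty$ there is unsupported.

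The real problem is that the part you defer as ``genuine work'' is the content of the lemma, and the justification you sketch fails at $s=1/2$. You assert that $\Ub_\infty+\chi_R\mathbf V_0$ is close to $\Gr_k$ because $\mathbf V_0$ is VMO with small averages on far balls. Neither property gives pointwise information. Points have zero $H^{1/2}$-capacity on $\R$: the truncated logarithms $\min\{1,\max\{0,\log(1/|x|)/\log(1/\delta)\}\}$ have energy $\lesssim1/\log(1/\delta)$. So an $H^{1/2}_\bullet$ map into $\Gr_k$ can make excursions of fixed size away from $\Ub_\infty$ on tiny intervals placed arbitrarily far out, with summable energy and $L^2$ mass. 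On the annulus where $0<\chi_R<1$, the convex combination $\Ub_\infty+\chi_R\mathbf V_0$ then leaves every tubular neighbourhood of $\Gr_k$. In the $\Ss^2$ case it even vanishes where $\chi_R=1/2$ and $\Ub_0=-\Ub_\infty$, so $P$ is undefined there. Moreover, once you mollify, you need closeness on all of $\R$, not only outside a compact set.

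What makes the reprojection legitimate is one quantitative estimate plus the right order of reductions. First mollify, without cutting off. For $F_\varepsilon=\Ub_\infty+\rho_\varepsilon*\mathbf V_0$ and $I_{x,\varepsilon}=(x-\varepsilon,x+\varepsilon)$, average $|F_\varepsilon(x)-\Ub_0(z)|^2$ over $z\in I_{x,\varepsilon}$. This gives $\operatorname{dist}(F_\varepsilon(x),\Gr_k)^2\le C\iint_{I_{x,\varepsilon}^2}|\Ub_0(y)-\Ub_0(z)|^2|y-z|^{-2}\,dy\,dz$, which tends to $0$ uniformly in $x$ by absolute continuity of the Gagliardo integral. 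Projecting yields $\Gr_k$-valued approximants converging in $H^{1/2}$, with perturbations in every $H^q$. Only now cut off. At regularity $q>1/2$ one has $H^q\subset C_0$, so a $C_c^\infty$ approximation in $H^q$ is uniformly close, a second projection is harmless, and a diagonal argument finishes the reduction. If you prefer to cut off the rough data, you must mollify first and take $R$ large depending on $\varepsilon$, using $|\rho_\varepsilon*\mathbf V_0(x)|\lesssim\varepsilon^{-1/2}\|\mathbf V_0\|_{L^2(I_{x,\varepsilon})}$.

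For the tail, high-order flatness is unnecessary. As written, ``multiplying the trigonometric frame perturbation by $(e^{\ii\theta}-1)^N$'' does not produce a trigonometric polynomial. It does so only if you first make the frame $G$ constant near $1$ and then approximate $(G-G(1))/(e^{\ii\theta}-1)^N$. The paper simply takes $\widetilde G_N=G_N-G_N(1)+G(1)$. Then $E_N=W_N-W$, with $W_N=\mathbbm 1_d-2\widetilde G_N(\widetilde G_N^*\widetilde G_N)^{-1}\widetilde G_N^*$, vanishes at $1$ and tends to $0$ in $C^q(\mathbb T)$. Hence $|\partial_x^\ell(E_N\circ\kappa)(x)|\le C_q\|E_N\|_{C^q}\langle x\rangle^{-1}$ for $\ell\le q$, which is square integrable. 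For $s>1/2$, it is this $C^q(\mathbb T)$ convergence, not $H^s(\mathbb T)$ or $\dot H^{1/2}$ convergence, that transfers to $H^s(\R)$.
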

\begin{proof}
The cases $k=0,d$ are constant maps. Suppose first that $s>1/2$. Approximate $\mathbf V_0$ in $H^s$ by smooth and compactly supported Hermitian matrix-valued functions. Since \(H^s\) embeds into \(C_0\), their sums with $\Ub_\infty$ lie in a fixed tubular neighborhood of $\Gr_k(\C^d)$ for all sufficiently large indices. Applying the smooth nearest-point projection gives a sequence of smooth $\Gr_k(\C^d)$-valued maps, each equal to $\Ub_\infty$ outside a compact interval, converging to $\Ub_0$ in $H^s$. Indeed, the projection is smooth and fixes both $\Ub_0$ and $\Ub_\infty$.

Therefore, it suffices to approximate such a smooth map $\Ub$. Let $\zeta(x):=\frac{x-\ii}{x+\ii},\enspace \Ub(x):=\widetilde\Ub(\zeta(x))$. The map $\widetilde\Ub$ is smooth on the unit circle and is constant near $\zeta=1$. Let $\widetilde P:=(\mathbbm{1}_d-\widetilde\Ub)/2$. As in the proof of rational density in \cite[Appendix A]{gerard2025global}, choose
$G\in C^\infty (\mathbb{T};\mathbb{C}^{d\times k})$ such that
$$
\rank(G(\zeta))=k,\enspace \widetilde P(\zeta)=G(\zeta)\left[G(\zeta)^*G(\zeta)\right]^{-1}G(\zeta)^*,\enspace \zeta\in\mathbb{T}.
$$
Let $G_N$ be the truncated Fourier series of $G$ and replace it by $\widetilde G_N(\zeta)=G_N(\zeta)-G_N(1)+G(1),\enspace \zeta\in\mathbb{T}$. For every integer $q\geq0$, $\widetilde G_N$ converges to $G$ in $C^q$. For all large $N$, one has $\widetilde G_N(1)=G(1),\enspace \rank(\widetilde G_N(\zeta))=k,\enspace \zeta\in\mathbb{T}$. Consequently, $\widetilde\Ub_N:=\mathbbm{1}_d-2\widetilde G_N(\widetilde G_N^*\widetilde G_N)^{-1}\widetilde G_N^*$ is rational on the circle, takes values in $\Gr_k(\C^d)$, and equals $\Ub_\infty$ at $1$. Therefore, $\Ub_N=\widetilde\Ub_N\circ\zeta$ is rational on $\R$ with the prescribed background. If $E_N=\widetilde\Ub_N-\widetilde\Ub$, then $E_N(1)=0$. The chain rule and $|\zeta(x)-1|\lesssim\langle x\rangle^{-1}$ give
$$
|\partial_x^\ell(E_N\circ\zeta)(x)|_{M_d(\C)}\leq C_q\|E_N\|_{C^q}\langle x\rangle^{-1},\enspace 0\leq\ell\leq q.
$$
Taking an integer $q\geq s$ proves convergence in $H^s(\R)$.

It remains to consider $s=1/2$. Let $\rho\in C_c^\infty((-1,1))$ be nonnegative with $\int\rho=1$, set $\rho_\varepsilon(x)=\varepsilon^{-1}\rho(x/\varepsilon)$, and define $F_\varepsilon=\Ub_\infty+\rho_\varepsilon*\mathbf V_0$. Then $F_\varepsilon-\Ub_0\to0$ in $H^{1/2}$. Writing $I_{x,\varepsilon}=(x-\varepsilon,x+\varepsilon)$, one has
$$
\operatorname{dist}\big(F_\varepsilon(x),\Gr_k(\C^d)\big)^2\leq C\iint_{I_{x,\varepsilon}^2}
       \frac{|\Ub_0(y)-\Ub_0(z)|_{M_d(\C)}^2}{|y-z|^2}\,\mathrm dy\mathrm dz.
$$
The last integrand belongs to $L^1(\R^2)$. Since $|I_{x,\varepsilon}^2|=4\varepsilon^2$, absolute continuity of its integral shows that the last bound tends to zero uniformly in $x$. Thus for small $\varepsilon$, the smooth nearest-point projection $P$ onto $\Gr_k(\C^d)$ is defined at every $F_\varepsilon(x)$.

Let $\Ub_{0,\varepsilon}=P(F_\varepsilon)$. Extend $P$ smoothly to the space of Hermitian matrices with bounded derivative. By the continuity of smooth composition in $H^{1/2}$, we have $\|\Ub_{0,\varepsilon}-\Ub_0\|_{H^{1/2}}\to0$. Every derivative of $F_\varepsilon-\Ub_\infty$ belongs to $L^2\cap L^\infty$. It follows that $\Ub_{0,\varepsilon}-\Ub_\infty\in H^q$ for every integer $q\geq0$. The case $s=2$ and a diagonal choice give the required rational approximations.
\end{proof}

The next two lemmas establish convergence and isometry of the transform in the explicit formula.
\begin{lemma}\label{observation contraction}
Let $B$ be a bounded self-adjoint operator on $L^2_+(\R;\mathcal V)$. Set $A_B:=X^*+B$ with $\dom(A_B)=\dom(X^*)$. Then:
\begin{itemize}
\item[(i)]The transform
$$
\mathcal O_B F(z) := \frac{1}{2\pi \ii} I_+ \big( (A_B - z \id)^{-1} F \big), \enspace \forall z \in \C_+
$$
defines a contraction $\mathcal O_B$ on $L^2_+\left(\mathbb{R};\mathcal{V}\right)$.
\item[(ii)]The strongly continuous semigroup $W_B(\eta):=\eu^{-\ii\eta A_B},\enspace\eta\geq0$ satisfies $W_B(\eta)W_B(\eta)^*=\id$.
\item[(iii)]One has the identity
\begin{align}\label{2e2m2}
 \|\mathcal O_BF\|_{L^2}^2=\|F\|_{L^2}^2-\lim_{\eta\to\infty}\|W_B(\eta)F\|_{L^2}^2.
\end{align}
\item[(iv)]\(\ker\left(\mathcal O_B\right)\) reduces \(W_B(\eta)\), and $W_B(\eta)$ restricts to a unitary group on \(\ker\left(\mathcal O_B\right)\).
\item[(v)]If \(B_j\to B\) strongly and $\sup\{\|B_j\|_{\mathrm{op}}\}<\infty$, then \(\mathcal O_{B_j}F\rightharpoonup \mathcal O_BF\) for every $F\in L^2_+\left(\mathbb{R};\mathcal{V}\right)$.
\end{itemize}
\end{lemma}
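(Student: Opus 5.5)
The plan is to pass to the Fourier side, where $X^*$ becomes $\ii\,\frac{\mathrm d}{\mathrm d\xi}$ on $H^1(\R_+;\mathcal V)$, and to read $I_+$ as evaluation at $\xi=0^+$. First I would check that $-\ii X^*$ generates the right-translation semigroup on $L^2(\R_+)$. Indeed, $\eu^{-\ii\eta X^*}$ acts by $\widehat F(\xi)\mapsto \widehat F(\xi-\eta)\mathbbm 1_{\xi\geq\eta}$. This is the adjoint shift, which is an isometry whose adjoint is the left shift $S_\eta^*$. Since $B$ is bounded and self-adjoint, $-\ii A_B=-\ii X^*-\ii B$ is a bounded perturbation, so by standard perturbation theory it generates a $C_0$ semigroup $W_B(\eta)$. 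Moreover $\ii A_B^*$ is the generator of the adjoint semigroup, with $A_B^*=X+B$, where $X$ is the maximal differentiation operator without boundary condition.

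For (ii), I would compute $\frac{d}{d\eta}\|W_B(\eta)^*G\|^2$ for $G$ in the domain. Because $\operatorname{Im}\langle X F,F\rangle=0$ for $F$ in the domain of $X$ (there is no boundary term, since $X$ differentiates on the full half-line with no condition at $0$ and only the decay at infinity enters), and because $B$ is self-adjoint, the norm is preserved. Hence $W_B^*$ is isometric, i.e. $W_BW_B^*=\id$. Equivalently, $W_B$ is a co-isometry.

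For (i) and (iii), the key identity is the dissipation formula for $F\in\dom(X^*)$:
\begin{align*}
 \frac{d}{d\eta}\|W_B(\eta)F\|^2=-2\,\operatorname{Im}\langle A_BW_B(\eta)F,W_B(\eta)F\rangle=-\tfrac{1}{2\pi}\,|I_+(W_B(\eta)F)|_{\mathcal V}^2 .
\end{align*}
This follows from $\langle \ii\partial_\xi g,g\rangle$ on $\R_+$ producing the boundary term $|g(0)|^2$, with the factor fixed by the Plancherel normalization. Integrating in $\eta$ gives
\begin{align*}
 \frac1{2\pi}\int_0^\infty |I_+W_B(\eta)F|^2\,d\eta=\|F\|^2-\lim_{\eta\to\infty}\|W_B(\eta)F\|^2,
\end{align*}
where the limit exists by monotonicity. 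Next I would write the resolvent for $z\in\C_+$ as a Laplace transform, $(A_B-z)^{-1}F=\ii\int_0^\infty \eu^{\ii z\eta}W_B(\eta)F\,d\eta$. Then $\mathcal O_BF(z)=\frac{1}{2\pi}\int_0^\infty \eu^{\ii z\eta}\,I_+W_B(\eta)F\,d\eta$, which is the Paley--Wiener representation of an $L^2_+$ function whose Fourier transform is $I_+W_B(\xi)F$ on $\R_+$. Plancherel then yields (iii), and hence the contraction property (i), first for $F\in\dom(X^*)$ and then for all $F$ by density. The step I expect to be the main obstacle is justifying that $I_+W_B(\cdot)F$ lies in $L^2_{\mathrm{loc}}$ and matching the $2\pi$ constants. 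I would handle this by working on the dense core and extending by the a priori bound.

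For (iv), (iii) shows that $\ker\mathcal O_B$ consists of those $F$ with $I_+W_B(\eta)F=0$ for all $\eta$. This set is invariant under $W_B$, and on it $\|W_BF\|=\|F\|$. To obtain invariance under $W_B^*$, I would use the representation $W_B^*(\eta)=\eu^{\ii\eta(X+B)}$: on vectors with vanishing boundary trace along the orbit, $X$ and $X^*$ act alike, so the isometric parts combine into a unitary group. Alternatively, an isometric $W_B(\eta)$ on the subspace combined with $W_BW_B^*=\id$ forces $W_B^*W_B=\id$ there, and reducing follows. Finally, for (v), the second resolvent identity gives $(A_{B_j}-z)^{-1}-(A_B-z)^{-1}=-(A_{B_j}-z)^{-1}(B_j-B)(A_B-z)^{-1}$. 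Here $(A_{B_j}-z)^{-1}$ is uniformly bounded by $(\operatorname{Im}z)^{-1}$ by dissipativity. Composing with $I_+$ requires graph-norm control: $X^*(A_{B_j}-z)^{-1}$ is uniformly bounded because the $B_j$ are uniformly bounded. Therefore $\mathcal O_{B_j}F(z)\to\mathcal O_BF(z)$ pointwise in $z\in\C_+$. Together with the uniform $L^2$ bound from (i), pointwise convergence of holomorphic functions identifies the weak limit, which proves (v).
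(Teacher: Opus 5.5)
Your overall route is the paper's. You use dissipativity of $-\ii X^*$ with boundary term $-\frac{1}{4\pi}|I_+(f)|_{\mathcal V}^2$, coisometry of $W_B$ from symmetry of the adjoint generator $\ii(X+B)$, integration of the dissipation identity, the Laplace representation of $(A_B-z\id)^{-1}$ giving $\widehat{\mathcal O_BF}(\eta)=I_+(W_B(\eta)F)$, and resolvent plus graph-norm convergence for (v).

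However, your first two paragraphs interchange the roles of $X$ and $X^*$, and as a result your justification of (ii) is false as written. On the Fourier side $-\ii X^*=\frac{\mathrm d}{\mathrm d\xi}$ on $H^1(\R_+;\mathcal V)$. So $\eu^{-\ii\eta X^*}$ is the \emph{left} shift $\widehat F\mapsto\widehat F(\cdot+\eta)$, which is a coisometry. With the right shift you describe, $I_+(W_0(\eta)F)=0$ for all $\eta>0$, and your own formula would give $\mathcal O_0=0$, whereas $\mathcal O_0=\id$. Likewise $X=(X^*)^*$ is not the maximal differentiation operator. It is $\ii\frac{\mathrm d}{\mathrm d\xi}$ on $\dom(X)=\{f\in\dom(X^*):I_+(f)=0\}$, equivalently multiplication by $x$ on $L^2_+$. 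The vanishing of $\operatorname{Im}\langle (X+B)G,G\rangle_{L^2}$ comes exactly from this boundary condition. Without it you get the nonzero term $-\frac{1}{4\pi}|I_+(G)|_{\mathcal V}^2$ that you yourself use in the next paragraph. Indeed, if $X$ were maximal, then $X=X^*$, $A_B$ would be self-adjoint, $W_B$ unitary, and (iii) would force $\mathcal O_B=0$. With the correct domain, density of $\dom(X)$, and contractivity, (ii) goes through as in the paper. There is also a sign slip: $\frac{\mathrm d}{\mathrm d\eta}\|W_B(\eta)F\|_{L^2}^2=+2\operatorname{Im}\langle A_BW_B(\eta)F,W_B(\eta)F\rangle_{L^2}$.

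In (iv), (iii) and monotonicity give $\mathcal N:=\ker(\mathcal O_B)=\{F:\|W_B(\eta)F\|_{L^2}=\|F\|_{L^2}\ \forall\eta\geq0\}$. This set is $W_B$-invariant, with $W_B(\eta)^*W_B(\eta)=\id$ on it. But invariance under $W_B(\eta)^*$ does not follow from that identity, so ``reducing follows'' needs an argument. For instance, take $F\in\mathcal N$ and $G=W_B(\eta)^*F$. The coisometry property and the semigroup law give $W_B(s)G=W_B(s-\eta)F$ if $s\geq\eta$, and $W_B(s)G=W_B(\eta-s)^*F$ if $s<\eta$. Both have norm $\|F\|_{L^2}=\|G\|_{L^2}$, so $G\in\mathcal N$. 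The paper instead lets the decreasing projections $P_r=W_B(r)^*W_B(r)$ converge to the projection onto $\bigcap_r\ran(W_B(r)^*)$ and identifies this intersection with $\ker(\mathcal O_B)$; both invariances are then immediate.

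Your other deviations are harmless. You generate $W_B$ by bounded perturbation, with contractivity then following from the dissipation identity, rather than via maximal dissipativity and Hille--Yosida. In (v) you use only uniform graph-norm bounds. These suffice because an $L^2$-convergent, graph-bounded sequence converges weakly in the graph space of the closed operator $X^*$, on which $I_+$ is continuous with finite-dimensional range. The paper instead obtains graph-norm convergence directly from $X^*(A_{B_j}-z\id)^{-1}F=F+(z-B_j)(A_{B_j}-z\id)^{-1}F$.
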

\begin{proof}
Since
\begin{align}\label{graph and X}
\text{Im}\left\langle X^*f,f\right\rangle_{L^2}=-\frac{1}{4\pi}\left|I_+\left(f\right)\right|_{\mathcal{V}}^2\leq0,\enspace f\in\mathrm{dom}(X^*),
\end{align}
the operator $-\ii X^*$ is dissipative. For $z$ satisfying $\mathrm{Im}z>0$ and $g\in L^2_+$, the equation $(X^*-z\id)f=g$ has a unique solution $f\in\mathrm{dom}(X^*)$, given by
$$
\widehat{f}(\xi)=\ii\int_\xi^{\infty}e^{\ii z(\eta-\xi)}\widehat{g}(\eta)d\eta\;\mathbbm{1}_{\xi>0},
$$
and
$$
\|\widehat{f}\|_{L^2}\leq \int_0^{\infty}e^{-s\mathrm{Im}z}\|\widehat{g}(\cdot+s)\|_{L^2}ds\leq(\mathrm{Im}z)^{-1}\|\widehat{g}\|_{L^2},\enspace \frac{\mathrm{d}}{\mathrm{d}\xi}\widehat{f}=-\ii z\widehat{f}-\ii\widehat{g}\in L^2(\R_+).
$$
It follows that $-\ii X^*$ is a maximal dissipative operator, and $\|(X^*-z\id)^{-1}\|_{\mathrm{op}}\leq (\mathrm{Im}z)^{-1}$. For $z$ satisfying $\operatorname{Im}z>\|B\|_{\mathrm{op}}$, the operator $A_B-z\id$ is invertible. Thus $-\ii A_B$ is also maximal dissipative. For every $z\in\C_+$, identity \eqref{graph and X}, together with $A_B=X^*+B$ and $B=B^*$, gives
$$
(\operatorname{Im}z)\|f\|_{L^2}^2\leq-\operatorname{Im}\langle(A_B-z\id)f,f\rangle_{L^2}\leq\|(A_B-z\id)f\|_{L^2}\|f\|_{L^2}.
$$
It follows that
\begin{align}\label{no inverse}
	\|(A_B-z\id)f\|_{L^2}\geq(\operatorname{Im}z)\|f\|_{L^2},\enspace f\in\dom(A_B).
\end{align}
To prove the invertibility of $A_B-z\id$ for every $z\in\C_+$, choose $Y>\max\{\operatorname{Im}z,\|B\|_{\mathrm{op}}\},\enspace z_0=\operatorname{Re}z+\ii Y$. The preceding argument gives the existence of $(A_B-z_0\id)^{-1}$, and the estimate above yields
$$
\|(A_B-z_0\id)^{-1}\|_{\mathrm{op}}\leq Y^{-1},\enspace \|(z_0-z)(A_B-z_0\id)^{-1}\|_{\mathrm{op}}\leq(Y-\operatorname{Im}z)Y^{-1}<1.
$$
Hence $\id+(z_0-z)(A_B-z_0\id)^{-1}$ is invertible. The factorization $A_B-z\id=\bigl(\id+(z_0-z)(A_B-z_0\id)^{-1}\bigr)(A_B-z_0\id)\enspace\text{on }\dom(A_B)$ then proves that $A_B-z\id$ is invertible. By \eqref{no inverse},
\begin{align}\label{yujiebound}
 \|(A_B-z\id)^{-1}\|_{\mathrm{op}}\leq(\operatorname{Im}z)^{-1},\enspace z\in\C_+.
\end{align}
By the Hille--Yosida theorem, $-\ii A_B$ generates the strongly continuous contraction semigroup $W_B$. Furthermore, the adjoint semigroup \(W_B^*\) is generated by $(-\ii A_B)^*=\ii A_B^*=\ii(X+B),\enspace \dom(X)=\{f\in\dom(X^*):I_+(f)=0\}$. For $f\in\dom(X)$, the symmetry of $X+B$ yields $\frac{\mathrm d}{\mathrm d\eta}\|W_B(\eta)^*f\|_{L^2}^2=0$. The density of $\dom(X)$ and the contraction of the semigroup imply that $W_B(\eta)^*$ is an isometry. It follows that
$W_B(\eta)W_B(\eta)^*=\id$.

For $F\in\dom(A_B)$, we obtain
$$
\frac{\mathrm d}{\mathrm d\eta}\|W_B(\eta)F\|_{L^2}^2=2\operatorname{Im}\left\langle A_BW_B(\eta)F,\,W_B(\eta)F\right\rangle_{L^2}=-\frac1{2\pi}\left|I_+\bigl(W_B(\eta)F\bigr)\right|_{\mathcal V}^2.
$$
Integrating this identity over $[0,r]$ yields
\begin{align}\label{2m2e2}
    \|F\|_{L^2}^2-\|W_B(r)F\|_{L^2}^2=\frac1{2\pi}\int_0^r \left|I_+\bigl(W_B(\eta)F\bigr)\right|_{\mathcal V}^2\,\mathrm d\eta.
\end{align}
By the density of $\dom(X^*)$ and the above estimate, the map $F\mapsto I_+(W_B(\cdot)F)$ extends uniquely to a bounded linear operator from \(L^2_+\) into \(L^2(\mathbb R_+)\). The resolvent formula yields
$$
(A_B-z\id)^{-1}F=\ii\int_0^\infty\eu^{\ii\eta z}W_B(\eta)F\,\mathrm d\eta,\enspace (\mathcal O_BF)(z)=\frac1{2\pi}\int_0^\infty\eu^{\ii\eta z} I_+(W_B(\eta)F)\,\mathrm d\eta.
$$
For $F\in\dom(A_B)$, the first integral converges in the graph norm of $A_B$, so $I_+$ can be applied to it. A density argument gives the second formula for general $F\in L^2_+$. It follows that
$$
\widehat{\mathcal O_BF}(\eta)=\mathbbm{1}_{\xi\geq0}(\eta)\,I_+\bigl(W_B(\eta)F\bigr),\enspace \|\mathcal O_BF\|_{L^2}^2=\frac{1}{2\pi}\int_0^\infty\left|I_+\bigl(W_B(\eta)F\bigr)\right|_{\mathcal V}^2\,\mathrm d\eta.
$$
Letting \(r\to\infty\) in \eqref{2m2e2} yields \eqref{2e2m2}. In particular, one has
$$
\|\mathcal O_BF\|_{L^2}\leq\|F\|_{L^2},\enspace F\in L^2_+\left(\mathbb{R};\mathcal{V}\right),
$$
Therefore, $\mathcal O_B$ is a contraction.

Let $P_r=W_B(r)^*W_B(r)$. Since $W_B(r)$ is a coisometry, the orthogonal projections $P_r$ form a decreasing family and converge strongly to the projection $P_\infty$ onto $\mathcal{N}:=\bigcap_{r\geq0}\ran (W_B(r)^*)$. From \eqref{2e2m2}, one has
\begin{align}\label{2em2}
 \|\mathcal O_BF\|_{L^2}^2=\|F\|_{L^2}^2-\|P_\infty F\|_{L^2}^2\enspace\text{and}\enspace \mathrm{ker} (\mathcal{O}_B)=\mathcal{N}.
\end{align}
For any $F\in\mathcal N$ and $r,a\geq0$, write $F=W_B(r+a)^*G_{r+a}=W_B(r)^*G_{r}$. We have $W_B(a)F=W_B(r)^*G_{r+a}\in\ran (W_B(r)^*)$, $W_B(a)^*F=W_B(a)^*W_B(r)^*G_{r}=W_B(r)^*W_B(a)^*G_{r}\in\ran (W_B(r)^*)$.
Thus $\mathrm{ker} (\mathcal{O}_B)$ reduces $W_B$. On this subspace, both $W_B(a)W_B(a)^*$ and $W_B(a)^*W_B(a)$ equal the identity. Therefore, the restricted semigroup extends to a unitary group.

Finally, \eqref{yujiebound} and the strong convergence $B_j\to B$ give
$$
\|(A_{B_j}-z)^{-1}F-(A_B-z)^{-1}F\|_{L^2}\to 0,\enspace \forall\; z\in\C_+,\; F\in L^2_+(\R;\mathcal{V}).
$$
Since $\sup\{\|B_j\|_{\mathrm{op}}\}<\infty$ and $X^*(A_{B_j}-z)^{-1}F=F+(z-B_j)(A_{B_j}-z)^{-1}F$, the convergence also holds in the graph norm of $X^*$. The continuity of $I_+$ in this norm gives $\mathcal O_{B_j}F(z)\to\mathcal O_BF(z),\enspace \forall \;z\in\C_+$. Since point evaluations span a dense subspace of the dual of $L^2_+$, the contraction bound implies $\mathcal O_{B_j}F\rightharpoonup\mathcal O_BF$ in $L^2_+$.
\end{proof}

\begin{lemma}\label{OtTisometry}
Set $t\in\R$. Let $T$ and $\widetilde T$ be bounded self-adjoint operators on $L^2_+\left(\mathbb{R};\mathcal{V}\right)$. If $\widetilde T\mathcal O_{tT}=\mathcal O_{tT} T$, then $\mathcal O_{tT}$ is an isometry.
\end{lemma}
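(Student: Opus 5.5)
The plan is to show that $\ker(\mathcal O_{tT})=\{0\}$. By \eqref{2em2} in the proof of Lemma \ref{observation contraction}, with $B:=tT$,
$$
\|\mathcal O_BF\|_{L^2}^2=\|F\|_{L^2}^2-\|P_\infty F\|_{L^2}^2,\qquad \ker(\mathcal O_B)=\mathcal N=\ran P_\infty ,
$$
so $\mathcal O_B$ is an isometry exactly when $\mathcal N=\{0\}$. The intertwining relation only enters to show that $\mathcal N$ is invariant under $T$. Indeed, if $\mathcal O_BF=0$, then $\mathcal O_B(TF)=\widetilde T\mathcal O_BF=0$. So $T\mathcal N\subset\mathcal N$. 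Since $T$ is bounded and self-adjoint, $\mathcal N$ reduces $T$, and hence it reduces $B=tT$; that is, $P_\infty B=BP_\infty$.

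\textbf{Step 1: $\mathcal N$ reduces the free semigroup.} By Lemma \ref{observation contraction}(iv), $\mathcal N$ reduces $W_B(\eta)=\eu^{-\ii\eta A_B}$, and $W_B$ restricts to a unitary group on $\mathcal N$. Since $X^*=A_B-B$ with $B$ bounded, the semigroup $W_0(\eta)=\eu^{-\ii\eta X^*}$ is obtained from $W_B$ by the bounded-perturbation (Dyson--Phillips) series. Concretely,
$$
W_0(\eta)=W_B(\eta)+\ii\int_0^\eta W_B(\eta-s)\,B\,W_0(s)\,\mathrm ds ,
$$
iterated. Every term of this series is built from $W_B(\cdot)$ and $B$, both of which commute with $P_\infty$, so $P_\infty$ commutes with $W_0(\eta)$ for all $\eta\ge0$. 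Thus $\mathcal N$ reduces $W_0$.

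\textbf{Step 2: the free semigroup is unitary on $\mathcal N$.} On $\mathcal N$, the restricted generator is $-\ii A_B|_{\mathcal N}+\ii B|_{\mathcal N}$. Here $A_B|_{\mathcal N}$ is self-adjoint, because its group is unitary by Lemma \ref{observation contraction}(iv), and $B|_{\mathcal N}$ is bounded and self-adjoint. Hence $X^*|_{\mathcal N}$ is self-adjoint on $\mathcal N$, and $W_0(\eta)|_{\mathcal N}$ is unitary. In particular, $\|W_0(\eta)F\|_{L^2}=\|F\|_{L^2}$ for every $F\in\mathcal N$.

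One can cross-check this with \eqref{graph and X}: self-adjointness of $X^*|_{\mathcal N}$ forces $I_+(f)=0$ on $\dom(X^*)\cap\mathcal N$.

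\textbf{Step 3: contradiction with the explicit free flow.} The semigroup $W_0$ is explicit on the Fourier side. Since $-\ii\eta X^*$ acts as $\eta\,\frac{\mathrm d}{\mathrm d\xi}$ on $\widehat f$, we get $\widehat{W_0(\eta)F}(\xi)=\widehat F(\xi+\eta)\mathbbm 1_{\xi\ge0}$. This is the left translation on $L^2(\R_+)$, and it satisfies $\|W_0(\eta)F\|_{L^2}^2=\frac1{2\pi}\int_\eta^\infty|\widehat F|^2\,\mathrm d\xi\to0$ as $\eta\to\infty$. Combined with Step 2, every $F\in\mathcal N$ has $\|F\|=\lim_{\eta\to\infty}\|W_0(\eta)F\|=0$. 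Therefore $\mathcal N=\{0\}$, and by \eqref{2em2} the transform $\mathcal O_{tT}$ is an isometry.

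I expect the main obstacle to be the passage in Step 1 from "$\mathcal N$ reduces $W_B$ and $B$" to "$\mathcal N$ reduces $\eu^{-\ii\eta X^*}$". This requires checking that the Dyson series (equivalently, the resolvent identity $(X^*-z)^{-1}=(A_B-z)^{-1}\sum_k\bigl(B(A_B-z)^{-1}\bigr)^k$ for $\operatorname{Im}z$ large) consists of operators commuting with $P_\infty$. I would verify this at the resolvent level, where it follows directly from commutation of $P_\infty$ with $(A_B-z)^{-1}$ and with $B$. I would then recover the semigroup by the Hille--Yosida exponential formula.
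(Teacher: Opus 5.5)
Your proposal is correct and takes essentially the paper's route: you show that $\mathcal N=\ker(\mathcal O_{tT})$ reduces $T$, then $W_{tT}$, then the free semigroup $\eu^{-\ii\eta X^*}$, and you reach a contradiction from the unitarity of $W_{tT}$ on $\mathcal N$ together with the fact that $\eu^{-\ii\eta X^*}$ acts as the left shift on the Fourier side. The differences are only technical: the paper transfers the reducing property through $PX^*=X^*P$ on $\dom(X^*)$ and differentiation of $S(\eta-r)PS(r)f$ rather than a Dyson--Phillips or Neumann series, and it concludes from $I_+(f)=0$ on $\mathcal N\cap\dom(X^*)$ and $\widehat f(\eta)=I_+(\eu^{-\ii\eta X^*}f)=0$ rather than from unitarity and decay of the shift.
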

\begin{proof}
Set $\mathcal N_t=\ker(\mathcal O_{tT})$. The intertwining relation gives $T\mathcal N_t\subset\mathcal N_t$. Since $T=T^*$, the subspace $\mathcal N_t$ reduces $T$. Lemma \ref{observation contraction} shows that $\mathcal N_t$ also reduces $W_{tT}(\eta):=e^{-\ii\eta A}$ for $\eta\geq0$, where $A=X^*+tT$. Hence the orthogonal projection $P$ onto this subspace satisfies $PW_{tT}(\eta)=W_{tT}(\eta)P,\enspace P\dom(A)\subset\dom(A),\enspace AP=PA\;\;\mathrm{on}\;\;\dom(A)$. Since $\dom(A)=\dom(X^*)$, we obtain $PX^*f=P(A-tT)f=(A-tT)Pf=X^*Pf,\enspace f\in \dom(X^*)$. Therefore, for $S(\eta):=\eu^{-\ii\eta X^*}$, one has
$$
\frac{\mathrm d}{\mathrm dr}\bigl(S(\eta-r)PS(r)f\bigr)=\ii S(\eta-r)X^*PS(r)f-\ii S(\eta-r)PS(r)X^*f=0,\enspace 0<r<\eta.
$$
Thus $S(\eta-r)PS(r)f$ is constant on $[0,\eta]$. Evaluating at $r=0$ and $r=\eta$ gives $S(\eta)P=PS(\eta)\;\;\mathrm{on}\;\;\dom(X^*)$. Since $\dom(X^*)$ is dense and both $P$ and $S(\eta)$ are bounded, this identity extends to all of $L^2_+(\R;\mathcal V)$. Hence $\mathcal N_t$ reduces $S(\eta)=\eu^{-\ii\eta X^*}$.

Lemma \ref{observation contraction} also shows that $W_{tT}$ is unitary on $\mathcal N_t$. For every $f\in\mathcal N_t\cap\dom(X^*)$, one has
$$
0=\left.\frac{\mathrm d}{\mathrm d\eta}\|W_{tT}(\eta)f\|_{L^2}^2\right|_{\eta=0}=2\operatorname{Im}\langle Af,f\rangle_{L^2}=-\frac{1}{2\pi}|I_+(f)|_{\mathcal V}^2.
$$
It follows that $I_+(f)=0,\enspace f\in\mathcal N_t\cap\dom(X^*)$. Since $\mathcal N_t$ reduces $\eu^{-\ii\eta X^*}$, we have $\eu^{-\ii\eta X^*}\mathcal N_t\subset \mathcal N_t$. Thus $\eu^{-\ii\eta X^*}$ preserves $\mathcal N_t\cap\dom(X^*)$. It follows that
$$
\widehat f(\eta)=I_+(\eu^{-\ii\eta X^*}f)=0,\enspace \eta\geq0,\enspace\text{and}\enspace \mathcal N_t\cap\dom(X^*)=\{0\}.
$$
Since the orthogonal projection $P$ satisfies $P\dom(X^*)\subset\dom(X^*)$, the space $\mathcal N_t\cap\dom(X^*)$ is dense in $\mathcal N_t$. It follows that $\mathcal N_t=\{0\}$. From \eqref{2em2}, we have
$\|\mathcal O_{tT}F\|_{L^2}=\|F\|_{L^2},\enspace F\in L^2_+(\R;\mathcal V)$.

\end{proof}
We prove the matrix version of Theorem \ref{global energy}.
\begin{proposition}\label{global solution matrix}
Equation \eqref{HWMm} is globally well-posed in $H^{1/2}_\bullet(\R;\Gr_k(\C^d))$. For every initial datum $\Ub_0=\Ub_\infty+\mathbf V_0\in H^{1/2}_\bullet(\R;\Gr_k(\C^d))$, the explicit formula \eqref{explicit formula} defines the corresponding global-in-time solution $\Ub(t)=\Phi_t(\Ub_0)=\Ub_\infty+\mathbf V(t),\enspace \mathbf V\in C(\R;H^{1/2}(\R;M_d(\C)))$, which solves \eqref{HWMm} in the distributional sense and has the following properties.
\begin{itemize}
	\item[(i)] The following functionals are conserved.
	$$
\|\mathbf V(t)\|_{L^2}=\|\mathbf V_0\|_{L^2},\enspace \|\mathbf V(t)\|_{\dot H^{1/2}}=\|\mathbf V_0\|_{\dot H^{1/2}},\enspace t\in\R.
$$
	\item[(ii)] The discrete spectrum of $T_{\Ub(t)}$ is preserved.
	\item[(iii)] The flow maps satisfy
	$\Phi_0=\id,\enspace \Phi_{t+\tau}=\Phi_t\circ\Phi_\tau,\enspace t,\tau\in\R$,
	and agree with the global classical rational flow on $\mathcal{R}at(\R;\Gr_k(\C^d))$.
\end{itemize}
\end{proposition}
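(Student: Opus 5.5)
We work throughout with $\Pi_+\mathbf V$. Since $\mathbf V$ is Hermitian-valued, $\mathbf V=\Pi_+\mathbf V+(\Pi_+\mathbf V)^*$, so all $L^2$ and $\dot H^{1/2}$ statements reduce to statements about $\Pi_+\mathbf V$.

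\textbf{Step 1: the rational identity.} Fix $\Ub_0=\Ub_\infty+\mathbf V_0\in H^{1/2}_\bullet(\R;\Gr_k(\C^d))$. By Lemma \ref{rational fixed background}, choose rational data $\Ub_{0,j}=\Ub_\infty+\mathbf V_{0,j}$ with $\mathbf V_{0,j}\to\mathbf V_0$ in $H^{1/2}$. By the rational global theory of \cite{gerard2025global} and Proposition \ref{local explicit}, the classical solutions $\Ub_j(t)$ satisfy
\begin{align*}
\Pi_+\mathbf V_j(t)=\mathcal O_{tT_{\Ub_{0,j}}}\Pi_+\mathbf V_{0,j}
\end{align*}
for all $t\in\R$ (time reversal handles $t<0$). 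They also satisfy the intertwining relation $T_{\Ub_j(t)}\mathcal O_{tT_{\Ub_{0,j}}}=\mathcal O_{tT_{\Ub_{0,j}}}T_{\Ub_{0,j}}$ of \eqref{rational explicit}, which I take as known for rational data.

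\textbf{Step 2: passage to the limit.} The Toeplitz operators satisfy $\|T_{\Ub_{0,j}}-T_{\Ub_0}\|_{\mathrm{op}}\le\|\mathbf V_{0,j}-\mathbf V_0\|_{L^\infty}$, which need not tend to zero. Instead I use that $T_{\Ub_{0,j}}\to T_{\Ub_0}$ strongly: $\mathbf V_{0,j}\to\mathbf V_0$ in $L^2$ and in measure, with uniformly bounded symbols, so dominated convergence applies. Lemma \ref{observation contraction}(v) then gives $\mathcal O_{tT_{\Ub_{0,j}}}F\rightharpoonup\mathcal O_{tT_{\Ub_0}}F$. Together with $\Pi_+\mathbf V_{0,j}\to\Pi_+\mathbf V_0$ in norm and the uniform contraction bound, this gives
\begin{align*}
\Pi_+\mathbf V_j(t)\rightharpoonup\mathcal O_{tT_{\Ub_0}}\Pi_+\mathbf V_0=:\Pi_+\mathbf V(t).
\end{align*}
Next I pass to the limit in the intertwining relation, using weak compactness of $T_{\Ub_j(t)}$. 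The $L^2$ conservation for rational flows bounds $\mathbf V_j(t)$ in $L^2$, so along a subsequence $\mathbf V_j(t)\rightharpoonup\mathbf W$ and $T_{\Ub_j(t)}\to T_{\Ub_\infty+\mathbf W}$ in the weak operator topology. Testing against smooth functions, the relation $\widetilde T\mathcal O_{tT}=\mathcal O_{tT}T$ passes to the limit with $T=T_{\Ub_0}$ and some bounded self-adjoint $\widetilde T$. Lemma \ref{OtTisometry} then shows $\mathcal O_{tT_{\Ub_0}}$ is an isometry, so
\begin{align*}
\|\Pi_+\mathbf V(t)\|_{L^2}=\|\Pi_+\mathbf V_0\|_{L^2}=\lim_j\|\Pi_+\mathbf V_j(t)\|_{L^2}.
\end{align*}
Weak convergence plus convergence of norms upgrades this to strong $L^2$ convergence for each fixed $t$. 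In particular $\Ub(t)$ is $\Gr_k$-valued a.e., being an a.e. limit of a subsequence, and the whole sequence converges because the limit is independent of the approximating sequence. I expect the main obstacle to be this identification of the limiting intertwining operator together with the isometry upgrade; everything else is softer.

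\textbf{Step 3: energy and spectrum.} Since $\mathbf V_j(t)\to\mathbf V(t)$ in $L^2$ with bounded symbols, $H_{\Ub_j(t)}\to H_{\Ub(t)}$ strongly. For $\dot H^{1/2}$ I use the identity $\|\Ub\|_{\dot H^{1/2}}^2=4\pi\Tr K_\Ub$. The classical flow preserves the spectrum of $K$, so $\Tr K_{\Ub_j(t)}=\Tr K_{\Ub_{0,j}}\to\Tr K_{\Ub_0}$. Hence $\mathbf V_j(t)$ is bounded in $\dot H^{1/2}$, the limit lies in $\dot H^{1/2}$, and lower semicontinuity bounds its energy. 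For the reverse inequality and preservation of the spectrum, I use the limiting relation $T_{\Ub(t)}\mathcal O=\mathcal O T_{\Ub_0}$ with $\mathcal O$ isometric. This shows $T_{\Ub_0}$ is unitarily equivalent to the restriction of $T_{\Ub(t)}$ to $\ran\mathcal O$. Therefore the discrete spectrum and the $K$-eigenvalues of $\Ub_0$ are contained in those of $\Ub(t)$, so $\Tr K_{\Ub(t)}\ge\Tr K_{\Ub_0}$, and equality follows from lower semicontinuity. Running the same argument backward from $\Ub(t)$ — after the group property is established — gives the reverse inclusion. Equality of the $H^{1/2}$ norms plus weak convergence then gives strong $H^{1/2}$ convergence for each $t$.

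\textbf{Step 4: uniformity, continuity, flow.} To get uniformity on $|t|\le T$, I show the rational solutions are equicontinuous in $H^{1/2}$. Equicontinuity in $H^{-1}$ follows from the equation, since $|D|\Ub$ lies in $H^{-1/2}$ and the product with a bounded $H^{1/2}$ map is controlled. Combined with conserved, and hence uniformly convergent, $H^{1/2}$ norms, an Arzelà–Ascoli and norm-convergence argument upgrades pointwise convergence to uniform convergence on compact time intervals, and gives $\mathbf V\in C(\R;H^{1/2})$. Continuous dependence follows by a diagonal argument: approximate each $\Ub_{0,n}$ by rational data and apply the independence of the approximating sequence. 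The group law holds on rational data and extends by density and continuity. Agreement with the rational flow is immediate. The distributional form of the equation passes to the limit, since $\Ub_j\to\Ub$ in $C([-T,T];H^{1/2})$, so $[\Ub_j,|D|\Ub_j]$ converges in distributions by pairing bounded $L^\infty\cap H^{1/2}$ terms against $H^{-1/2}$ terms with smooth test functions.
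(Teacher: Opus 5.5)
\textbf{Gap in Step 2.} The gap is at the point you flagged yourself: passing to the limit in the rational intertwining relation. Testing it gives $\langle \mathcal O_{tT_{\Ub_{0,j}}}F,\,T_{\Ub_j(t)}G\rangle$. With only $\mathbf V_j(t)\rightharpoonup\mathbf W$ in $L^2$, both factors converge merely weakly, and a weak--weak pairing need not converge. Testing against smooth $G$ does not help, because $\Pi_+(\mathbf V_j(t)G)$ still converges only weakly.

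This is not a technicality. The example of Theorem \ref{thm illposedness} satisfies everything you use at this stage: $L^2$-convergent rational data with a fixed background, strongly convergent initial Toeplitz operators, $L^2$-bounded rational solutions, and a weak-operator limit of $T_{\Ub_n(t)}$. Yet there the limiting $\mathcal O_{tT_{\Ub_*}}$ is not an isometry, since the $L^2$ defect $8|t|/3>0$ gives $\|\mathcal O_{tT_{\Ub_*}}\Pi_+\mathbf V_*\|_{L^2}<\|\Pi_+\mathbf V_*\|_{L^2}$. By Lemma \ref{OtTisometry}, no bounded self-adjoint $\widetilde T$ can satisfy the limiting intertwining relation in that example. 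So your Step 2, which never uses $H^{1/2}$, would prove something false.

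\textbf{Repair.} The missing ingredient is $H^{1/2}$ compactness, which you only invoke in Step 3 but which is available from the start. Rational energy conservation bounds $\mathbf V_j(t)$ uniformly in $H^{1/2}$, so Rellich gives $\mathbf V_j(t)\to\mathbf V(t)$ in $L^2_{\mathrm{loc}}$. With the uniform $L^\infty$ bound and density of bounded compactly supported functions, this yields $T_{\Ub_j(t)}\to T_{\Ub(t)}$ strongly. The pairing is then weak--strong and converges. This identifies $\widetilde T=T_{\Ub(t)}$ and gives $\Gr_k$-valuedness through an a.e.\ convergent subsequence; this is exactly how the paper proceeds. Step 3 must then be reordered, since it currently relies on the strong $L^2$ convergence you derived from the flawed Step 2.

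\textbf{Rest of the outline.} With that repair, the rest is sound, and several steps genuinely differ from the paper:
\begin{itemize}
\item For the reverse spectral inclusion, you rerun the argument from $\Ub(t)$ at time $-t$ after proving the group law, which is not circular. The paper instead uses equality of traces to get $K_{\Ub(t)}=0$ on $\ran(\mathcal O_{tT_{\Ub_0}})^\perp$, hence $T_{\Ub(t)}^2=\id$ there.
\item For uniformity in time, you use $H^{-1}$ equicontinuity from the equation. The paper uses Lemma \ref{observation contraction}(v) along arbitrary $t_j\to t$.
\item For continuous dependence, you use a diagonal rational approximation. This is cleaner than the paper's rerun of the weak-limit argument for general $H^{1/2}$ data.
\end{itemize}
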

\begin{proof}
By the density established in Lemma \ref{rational fixed background}, let $\Ub_{0,j}=\Ub_{\infty}+\mathbf V_{0,j}\in\mathcal{R}at(\R;\Gr_k(\C^d))$ be any sequence such that $\|\mathbf V_{0,j}-\mathbf V_0\|_{H^{1/2}}\to0$. Denote the corresponding global classical rational solutions by $\Ub_j=\Ub_\infty+\mathbf V_j$, and denote the corresponding Toeplitz operator $T_{\Ub_{0,j}}$ by $T_j$. The proof of the explicit formula in \cite{gerard2025global} gives
\begin{align}\label{rational explicit}
 \Pi_+\mathbf V_j(t)=\mathcal O_{tT_j}\Pi_+\mathbf V_{0,j}\enspace \text{and}\enspace T_{\Ub_j(t)}\mathcal O_{tT_j}=\mathcal O_{tT_j}T_j\enspace\text{on }L^2_+(\R;\mathcal{V}),
\end{align}
where $\mathcal O_{tT_j}$ is unitary. In particular,
$$
\|\mathbf V_j(t)\|_{L^2}=\|\mathbf V_{0,j}\|_{L^2},\enspace\|\mathbf V_j(t)\|_{\dot H^{1/2}}=\|\mathbf V_{0,j}\|_{\dot H^{1/2}}.
$$
Thus $\mathbf V_j(t)$ is bounded in $H^{1/2}$ uniformly in $j$ and $t$.

The initial convergence gives $\mathbf V_{0,j}\to\mathbf V_0$ in $L^2$. Together with $\|\Ub_{0,j}(x)\|_{\mathrm{op}}=1$, this implies $T_j\to T_0$ strongly, where $T_0=T_{\Ub_0}$. For every $t_j\to t\in\R$, we thus have $t_jT_j\to tT_0$ strongly, and Lemma \ref{observation contraction} yields
$$
\Pi_+\mathbf V_j(t_j)=\mathcal O_{t_jT_j}\Pi_+\mathbf V_{0,j}\rightharpoonup\mathcal O_{tT_0}\Pi_+\mathbf V_0\enspace\text{in }L^2(\R;\mathcal{V}).
$$
Set $\mathbf V(t):=\mathcal O_{tT_0}\Pi_+\mathbf V_0+(\mathcal O_{tT_0}\Pi_+\mathbf V_0)^*$. This weak $L^2$ limit depends only on $\Ub_0$ and $t$. The uniform $H^{1/2}$ bound therefore yields
\begin{align}\label{weak convergence}
 \mathbf V_j(t_j)\rightharpoonup\mathbf V(t)\enspace\text{in }H^{1/2}(\R;\mathcal{V}),
\end{align}
and the compact embedding gives
\begin{align}\label{local convergence}
\mathbf V_j(t_j)\to\mathbf V(t)\enspace\text{in }L^2_{\mathrm{loc}}(\R;\mathcal{V}).
\end{align}
A subsequence of $\mathbf V_j(t_j)$ converges almost everywhere to $\mathbf V(t)$. Since $\Gr_k(\C^d)$ is closed, $\Ub(t):=\Ub_\infty+\mathbf V(t)$ takes values in this manifold. In addition, weak lower semicontinuity gives
\begin{align}\label{preliminary energy inequality}
\|\mathbf V(t)\|_{\dot H^{1/2}}^2\le\liminf_{j\to\infty}\|\mathbf V_j(t_j)\|_{\dot H^{1/2}}^2 =\liminf_{j\to\infty}\|\mathbf V_{0,j}\|_{\dot H^{1/2}}^2=\|\mathbf V_0\|_{\dot H^{1/2}}^2.
\end{align}

The local convergence in \eqref{local convergence} implies $T_{\Ub_j(t_j)}\to T_{\Ub(t)}\enspace\text{strongly on }L^2_+(\R;\mathcal{V})$, where the density of bounded, compactly supported functions in \(L^2\) is used. Taking the weak limit in the second identity of \eqref{rational explicit}, with the self-adjoint Toeplitz operator acting on the test function, yields
\begin{align}\label{intertwining identity}
 T_{\Ub(t)}\mathcal O_{tT_0}=\mathcal O_{tT_0}T_0\enspace\text{on }L^2_+(\R;\mathcal{V}).
\end{align}
Lemma \ref{OtTisometry} shows that $\mathcal O_{tT_0}$ is an isometry, and thus
$\|\mathbf V(t)\|_{L^2}=\|\mathbf V_0\|_{L^2}$.

Let $P_t$ be the orthogonal projection onto the closed range of the isometry $\mathcal O_{tT_0}$. By \eqref{intertwining identity}, this range is invariant under $T_{\Ub(t)}$ and hence reducing, since $T_{\Ub(t)}$ is self-adjoint. Moreover, \eqref{TandK} and \eqref{intertwining identity} give $K_{\Ub(t)}\mathcal O_{tT_0}=\mathcal O_{tT_0}K_{\Ub_0}\enspace\text{on }L^2_+(\R;\mathcal{V})$. Let \(\{e_j\}\) be an orthonormal basis of $L^2_+(\mathbb{R};\C^d)$. Then \(\left\{\mathcal O_{tT_0} e_j\right\}\) is an orthonormal basis of \(\ran(\mathcal O_{tT_0})\). Therefore,
\begin{align*}
\|\mathbf V(t)\|_{\dot H^{1/2}}^2=&4\pi\Tr \left(K_{\Ub(t)}\right)\geq4\pi\Tr\big(K_{\Ub(t)}|_{\ran(\mathcal O_{tT_0})}\big)=4\pi\sum \langle K_{\Ub(t)}\mathcal O_{tT_0} e_j,\mathcal O_{tT_0} e_j\rangle\\
 =&4\pi\sum \langle \mathcal O_{tT_0}K_{\Ub_0} e_j,\mathcal O_{tT_0} e_j\rangle=4\pi\sum \langle K_{\Ub_0}e_j,e_j\rangle=4\pi\Tr \left(K_{\Ub_0}\right)=\|\mathbf V_0\|_{\dot H^{1/2}}^2.
\end{align*}
Combining this with \eqref{preliminary energy inequality} yields
$$
\|\mathbf V(t)\|_{\dot H^{1/2}}^2=\|\mathbf V_0\|_{\dot H^{1/2}}^2,\enspace \Tr \left(K_{\Ub(t)}\right)=\Tr\big(K_{\Ub(t)}|_{\ran(\mathcal O_{tT_0})}\big).
$$
It follows that
\begin{align}\label{1019}
K_{\Ub(t)}|_{\ran(\mathcal O_{tT_0})^\perp}=0\;\text{ and }\; (T_{\Ub(t)}|_{\ran(\mathcal O_{tT_0})^\perp})^2=\id.
\end{align}
To prove preservation of the discrete spectrum, let $v\in(-1,1)$. If $T_0f=vf$ with $f\neq0$, then $T_{\Ub(t)}\mathcal O_{tT_0} f=\mathcal O_{tT_0}T_0f=v\mathcal O_{tT_0} f$, and $\mathcal O_{tT_0} f\neq0$. Conversely, suppose that $T_{\Ub(t)}g=vg$ with $g\neq0$. Decompose
$$
g=\mathcal O_{tT_0} f+h,\enspace f=\mathcal O_{tT_0}^*g,\enspace h=(\id-P_t)g\in\ran(\mathcal O_{tT_0})^\perp.
$$
Since $\ran(\mathcal O_{tT_0})$ reduces $T_{\Ub(t)}$, the identity $0=(T_{\Ub(t)}-v\id)g=\mathcal O_{tT_0}(T_0-v\id)f+(T_{\Ub(t)}-v\id)h$ is an orthogonal decomposition into vectors in $\ran(\mathcal O_{tT_0})$ and $\ran(\mathcal O_{tT_0})^\perp$, respectively, so both terms vanish. The injectivity of $\mathcal O_{tT_0}$ gives $T_0f=vf$, while $T_{\Ub(t)}h=vh$. By \eqref{1019}, we obtain $h=T_{\Ub(t)}^2h=v^2h$. Since $|v|<1$, this implies $h=0$. Consequently, $g=\mathcal O_{tT_0} f$ and $f\neq0$. We have proved $\ker(T_{\Ub(t)}-v\id)=\mathcal O_{tT_0}\ker(T_0-v\id),\enspace v\in(-1,1)$. Therefore, we have $\sigma_{\mathrm d}(T_{\Ub(t)})=\sigma_{\mathrm d}(T_{\Ub_0})$.

By \eqref{weak convergence} and conservation of the $L^2$ norm and energy,
$$
 \|\mathbf V_j(t_j)\|_{\dot H^a}=\|\mathbf V_{0,j}\|_{\dot H^a}
 \longrightarrow\|\mathbf V_0\|_{\dot H^a}=\|\mathbf V(t)\|_{\dot H^a},\enspace a=0,\tfrac12.
$$
Thus $\mathbf V_j(t_j)\to\mathbf V(t)$ strongly in $H^{1/2}$. As $s\to t$, Lemma \ref{observation contraction} gives
$$
 \mathcal O_{sT_0}\Pi_+\mathbf V_0\rightharpoonup
 \mathcal O_{tT_0}\Pi_+\mathbf V_0\enspace\text{in }L^2.
$$
The isometry property upgrades this to strong convergence in $L^2$. The uniform $H^{1/2}$ bound then gives $\mathbf V(s)\rightharpoonup\mathbf V(t)$ in $H^{1/2}$, and conservation of both norms yields strong convergence. Hence $\mathbf V\in C(\R;H^{1/2}(\R;M_d(\C)))$. A contradiction argument using a convergent subsequence of times gives
$\mathbf V_j\to\mathbf V$ in $C([-T,T];H^{1/2}(\R;M_d(\C)))$ for every $T>0$.
It follows from $\mathcal O_0=\id$ that $\Ub(0)=\Ub_0$. Since the rational approximating sequence was arbitrary, $\Ub$ is the global-in-time solution in the sense of Definition \ref{def global solution}. For rational initial data, the constant approximating sequence shows that this solution agrees with the global classical rational solution.

For $\Psi\in C_c^\infty(\R_t\times\R_x;M_d(\C))$, define the nonlinear term by
$$
\int_{\R}\Tr\left([\Ub,|D|\Ub]\Psi\right)\,\mathrm dx:=\int_{\R}\Tr\big(|D|^{1/2}\mathbf V\,|D|^{1/2}[\Psi,\Ub]\big)\,\mathrm dx,
$$
where the right-hand side is well-defined. The strong convergence allows us to pass to the limit in the equations satisfied by rational solutions, yielding
$$
\int_{\R}\int_{\R}\Tr(\mathbf V\partial_t\Psi)\,\mathrm dx\mathrm dt=\frac{\ii}{2}\int_{\R}\int_{\R} \Tr\big(|D|^{1/2}\mathbf V\,|D|^{1/2}[\Psi,\Ub]\big)\,\mathrm dx\mathrm dt.
$$
Thus $\Ub$ solves \eqref{HWMm} in the distributional sense.

For rational $\Ub_{0,j}$, the classical solutions
$U_1(t)=\Phi_{t+\tau}(\Ub_{0,j})$ and
$U_2(t)=\Phi_t(\Phi_\tau(\Ub_{0,j}))$ have the same initial value $\Phi_\tau(\Ub_{0,j})$. Classical uniqueness gives
$\Phi_{t+\tau}(\Ub_{0,j})=\Phi_t(\Phi_\tau(\Ub_{0,j}))$.
Passing to the limit yields $\Phi_{t+\tau}(\Ub_0)=\Phi_t(\Phi_\tau(\Ub_0))$; together with $\Phi_0=\id$, this is the group property.

Finally, we prove continuous dependence on initial data. Let $\Ub_{0}^{(n)}=\Ub_{\infty}+\mathbf V_{0}^{(n)}\in H^{1/2}_\bullet(\R;\Gr_k(\C^d))$ be any sequence such that $\|\mathbf V_{0}^{(n)}-\mathbf V_0\|_{H^{1/2}}\to0$. Together with the uniform bound $\|\Ub_0^{(n)}(x)\|_{\mathrm{op}}=1$, this implies $T^{(n)}\to T_0$ strongly, where $T^{(n)}=T_{\Ub_0^{(n)}}$ and $T_0=T_{\Ub_0}$. For every $t_n\to t\in\R$, we thus have $t_nT^{(n)}\to tT_0$ strongly, and Lemma \ref{observation contraction} yields
$$
\Pi_+\mathbf V^{(n)}(t_n)=\mathcal O_{t_nT^{(n)}}\Pi_+\mathbf V_0^{(n)}\rightharpoonup\mathcal O_{tT_0}\Pi_+\mathbf V_0=\Pi_+\mathbf V(t)\enspace\text{in }L^2_+(\R;M_d(\C)),
$$
where $\mathbf V^{(n)}(t)=\Phi_t(\Ub_0^{(n)})-\Ub_\infty$. The conservation laws imply that $\mathbf V^{(n)}(t_n)$ is bounded in $H^{1/2}$. It follows that $\mathbf V^{(n)}(t_n)\rightharpoonup\mathbf V(t)\enspace\text{in }H^{1/2}$. The same conservation laws yield
\begin{align*}
 &\|\mathbf V^{(n)}(t_n)\|_{L^2}^2+\|\mathbf V^{(n)}(t_n)\|_{\dot H^{1/2}}^2=\|\mathbf V_{0}^{(n)}\|_{L^2}^2+\|\mathbf V_{0}^{(n)}\|_{\dot H^{1/2}}^2\\
 &\to\|\mathbf V_0\|_{L^2}^2+\|\mathbf V_0\|_{\dot H^{1/2}}^2=\|\mathbf V(t)\|_{L^2}^2+\|\mathbf V(t)\|_{\dot H^{1/2}}^2.
\end{align*}
Weak convergence and convergence of these norms imply
\begin{align}\label{s l y}
\|\Phi_{t_n}(\Ub_{0}^{(n)})-\Phi_t(\Ub_0)\|_{H^{1/2}}\to0.
\end{align}
For uniform convergence on $[-T,T]$, suppose that $\|\Phi_{t_n}(\Ub_{0}^{(n)})-\Phi_{t_n}(\Ub_0)\|_{H^{1/2}}\geq\varepsilon_0>0,\enspace t_n\in[-T,T]$. A subsequence satisfies $t_n\to t\in[-T,T]$. It follows from \eqref{s l y} that
$$
\|\Phi_{t_n}(\Ub_{0}^{(n)})-\Phi_t(\Ub_0)\|_{H^{1/2}}\to0\enspace \text{and} \enspace\|\Phi_{t_n}(\Ub_{0})-\Phi_t(\Ub_0)\|_{H^{1/2}}\to0.
$$
This contradicts the preceding lower bound.
\end{proof}
Taking $d=2$, $k=1$ in Proposition \ref{global solution matrix} proves Theorem \ref{global energy}.

\section{Proof of Theorem \ref{thm illposedness}: Ill-posedness below $H^{1/2}_\bullet$}\label{sec illposedness}
Write $\mathbf d_n=\mathbf u_{0,n}-f(x)\mathbf e_3$. For $0\le y<1$, $2y^{2n}/(1+y^{4n})\le2y^{2n}$ and $|(y^{4n}-1)/(1+y^{4n})+1|\le2y^{4n}$.
For $y>1$ the corresponding bounds are $2y^{-2n}$ and $2y^{-4n}$. Integration on the two half-lines gives
$\|\mathbf d_n\|_{L^1}\le C/n$ and $\operatorname{TV}(\mathbf d_n)\le C$, where $\operatorname{TV}$ denotes total variation. For the second estimate, the first component of $\mathbf u_{0,n}$ increases from $0$ to $1$ and decreases to $0$ on the positive half-line, while its third component increases from $-1$ to $1$. The limiting step has bounded variation as well. Hence $|\widehat{\mathbf d_n}(\xi)|\le C\min\{n^{-1},|\xi|^{-1}\}$ for $\xi\ne0$, so
$$
\|\mathbf d_n\|_{H^s}^2\le C_s\left[n^{-2}\int_0^n(1+\xi^2)^s\,d\xi+\int_n^\infty\xi^{2s-2}\,d\xi\right]\le C_s n^{2s-1},\enspace 0\le s<1/2.
$$
For $s<0$, the convergence follows from $\|\mathbf d_n\|_{H^s}\le\|\mathbf d_n\|_{L^2}$. In addition, $\mathbbm1_I\in H^s$ for every $s<1/2$, as follows directly from $\widehat{\mathbbm1_I}(\xi)=2\sin\xi/\xi$. This proves \eqref{sharp initial convergence}.

To pass to the limit in the explicit formula, set
$$
\mathbf U_{0,n}=\mathbf u_{0,n}\cdot\bm\sigma,\enspace \mathbf U_*=f\sigma_3,\enspace \mathbf V_{0,n}=\mathbf U_{0,n}-\sigma_3,\enspace \mathbf V_*=\mathbf U_*-\sigma_3.
$$
The pointwise convergence away from $\{-1,1\}$ and the uniform bound $\sup\|\mathbf U_{0,n}(x)\|_{\mathrm{op}}=1$ imply
$$
T_{\mathbf U_{0,n}}\to T_{\mathbf U_*}\enspace\text{strongly on }L^2_+,\enspace \Pi_+\mathbf V_{0,n}\to\Pi_+\mathbf V_*\enspace\text{in }L^2_+.
$$
Indeed, dominated convergence applies to $(\mathbf U_{0,n}-\mathbf U_*)F$ for each $F\in L^2_+$. By Lemma \ref{observation contraction}, we have
\begin{align}\label{sharp resolvent limit}
 \Pi_+\bigl(\mathbf U_n(t)-\sigma_3\bigr)=\mathcal O_{tT_{\mathbf U_{0,n}}}\Pi_+\mathbf V_{0,n}\rightharpoonup\mathcal O_{tT_{\mathbf U_*}}\Pi_+\mathbf V_*\enspace\text{in }L^2_+.
\end{align}

It remains to evaluate the diagonal scalar resolvents. We recall the following scalar consequence of G\'erard's zero dispersion formula \cite[Lemma 6 and Corollary 5]{gerard2024zero}. If $q\in L^2\cap L^\infty$ is real-valued, define $Z_q(\tau)\in L^2(\mathbb R;\mathbb R)$ by the identity
$$
\Pi_+Z_q(\tau,z)=\frac{1}{2\pi\ii}I_+\bigl[(X^*+2\tau T_q-z\id)^{-1}\Pi_+q\bigr],\enspace z\in \C_+.
$$
For $\tau\ne0$, it satisfies
$$
2\tau\partial_xZ_q(\tau)=1-\mu_{2\tau q},\enspace\int_{\mathbb R}\varphi\,d\mu_{2\tau q} =\int_{\mathbb R}\varphi\bigl(y+2\tau q(y)\bigr)\,dy,
$$
where $\varphi\in C_c^\infty(\mathbb R)$. Taking $q=\mathbbm1_I$ and writing $F_\tau=Z_{\mathbbm1_I}(\tau)$, the measure $\mu_{2\tau q}$ has density $1-\mathbbm1_I+\mathbbm1_{I+2\tau}$.
Consequently,
$$
2\tau\partial_xF_\tau=\mathbbm1_I-\mathbbm1_{I+2\tau},\enspace F_\tau(x)=\frac{1}{2\tau}\int_{x-2\tau}^{x}\mathbbm1_I(y)\,dy.
$$
For $\tau<0$, the integral is oriented; the condition $F_\tau\in L^2$ fixes the integration constant.

Since $T_f=\id-2T_{\mathbbm1_I}$, the upper diagonal entry in \eqref{sharp resolvent limit} is
$$
-\frac{2}{2\pi\ii}I_+\bigl[(X^*-2tT_{\mathbbm1_I}-(z-t)\id)^{-1}\Pi_+\mathbbm1_I\bigr]=-2\Pi_+F_{-t}(z-t).
$$
The lower diagonal entry is $2\Pi_+F_t(z+t)$, and both off-diagonal entries vanish. Reconstructing the Hermitian boundary values gives
\begin{align*}
 \mathbf U_n(t)-\sigma_3\rightharpoonup
 \begin{pmatrix}-2F_{-t}(x-t)&0\\0&2F_t(x+t)\end{pmatrix}.
\end{align*}
The scalar formula yields
$$
F_{-t}(x-t)=F_t(x+t)
 =\frac{1}{2|t|}\int_{x-|t|}^{x+|t|}\mathbbm1_I(y)\,dy,
$$
which proves \eqref{sharp weak limit}. Let $t\neq0$. The overlap $|I\cap[x-|t|,x+|t|]|$ takes values strictly between $0$ and $2|t|$ on a set of positive measure. Thus $|h_t|<1$ there, and $h_t\mathbf e_3$ is not sphere-valued.

Finally, for $0<|t|<1$, the limiting profile is
\begin{align*}
 h_t(x)=
 \begin{cases}
  1,&|x|\ge1+|t|,\\
  -1,&|x|\le1-|t|,\\
  (|x|-1)/|t|,&1-|t|<|x|<1+|t|.
 \end{cases}
\end{align*}
Direct integration gives
$$
\|(h_t-1)\mathbf e_3\|_{L^2}^2=8-\frac{8|t|}{3},\enspace \|f(\cdot)\mathbf{e}_3-\mathbf e_3\|_{L^2}^2=8.
$$
Conservation of the $L^2$ norm for each rational solution and \eqref{sharp weak limit} therefore imply
$$
\lim_{n\to\infty}\|\mathbf u_n(t)-h_t\mathbf e_3\|_{L^2}^2=8-\|(h_t-1)\mathbf e_3\|_{L^2}^2=\frac{8|t|}{3}.
$$
This proves \eqref{sharp defect} and Theorem \ref{thm illposedness}.

\section{Eigenfunctions of the Toeplitz operator}\label{section eigenfunctions}
\begin{proposition}[Eigenfunctions of the Toeplitz operator belong to the domain of \(X^*\)]\label{varphiin}
	Let $\Ub_0=\Ub_\infty+\mathbf{V}_0$ satisfy the conditions \eqref{stronginitial 1} and \eqref{stronginitial 2} with $s=1/2$, and assume that $\mathbf V_0\in C_0(\R;M_d(\C))$. If $\varphi$ is an eigenfunction of the Toeplitz operator $T_{\Ub_0}$ corresponding to an eigenvalue $v\in(-1,1)$, then $\varphi\in\mathrm{dom}(X^*)$.
\end{proposition}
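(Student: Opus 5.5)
The plan is to reduce the eigenvalue equation to a perturbation problem in which the ``bad'' part of $\mathbf V_0$ is uniformly small, and then to invert by a Neumann series that preserves $\dom(X^*)$. We aim only for $\widehat\varphi\in H^1(\R_+)$; the boundary value $I_+(\varphi)=\widehat\varphi(0^+)$ is allowed to be nonzero.

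\textbf{Step 1: rewrite the eigenvalue equation.} Write $T_{\Ub_0}\varphi=v\varphi$ as
\[
(\Ub_\infty-v)\varphi+\Pi_+(\mathbf V_0\varphi)=0 .
\]
Since $\Ub_\infty^2=\mathbbm 1_d$ and $\Ub_\infty=\Ub_\infty^*$, the constant matrix $M:=\Ub_\infty-v$ is invertible with $\|M^{-1}\|_{\mathrm{op}}\le(1-|v|)^{-1}$. Because $\mathbf V_0\in C_0\cap L^2$, for any $\delta>0$ we can split
\[
\mathbf V_0=\mathbf V_c+\mathbf V_s,
\]
with the following properties: $\mathbf V_c$ is bounded and compactly supported; $\|\mathbf V_s\|_{L^\infty}\le\delta$; and $\|\mathbf V_s\|_{L^2}\le\|\mathbf V_0\|_{L^2}$ (use a cutoff $\chi_R\mathbf V_0$ with $R$ large). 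Then
\[
(M+T_{\mathbf V_s})\varphi=g,\qquad g:=-\Pi_+(\mathbf V_c\varphi).
\]
Since $x\mathbf V_c\varphi\in L^2$, the Fourier transform of $\mathbf V_c\varphi$ lies in $H^1(\R)$. Restricting to $\R_+$ gives $g\in\dom(X^*)$.

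\textbf{Step 2: $T_W$ preserves $\dom(X^*)$, with an explicit commutator.} For a bounded $W\in L^2\cap L^\infty$ and $f\in\dom(X^*)$, decompose
\[
f=\bigl(f-I_+(f)e\bigr)+I_+(f)e,\qquad e:=\frac{\ii}{x+\ii},
\]
so that $\widehat e(\xi)=2\pi\eu^{-\xi}\mathbbm 1_{\xi>0}$. Then $f_0:=f-\frac{1}{2\pi}I_+(f)e$ satisfies $\widehat{f_0}\in H^1(\R)$ after extension by zero, and hence $xf_0\in L^2$. Therefore $\Pi_+(Wf_0)\in\dom(X^*)$ with
\[
X^*\Pi_+(Wf_0)=\Pi_+(Wxf_0).
\]
The term $\Pi_+(We)$ is handled similarly: $xe$ is bounded, so $x\,We\in L^2$, since $W\in L^2$. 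This yields $T_W\dom(X^*)\subset\dom(X^*)$ together with a bound of the form
\[
\|X^*T_Wf\|_{L^2}\le\|W\|_{L^\infty}\bigl(\|X^*f\|_{L^2}+C\|f\|_{L^2}\bigr)+C\|W\|_{L^2}\,|I_+(f)|_{\mathcal V}.
\]

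\textbf{Step 3: Neumann series in a weighted graph norm.} The trace inequality $|I_+(f)|^2\le C\|\widehat f\|_{L^2}\|\widehat f'\|_{L^2}$ gives, for every $\eta>0$,
\[
|I_+(f)|\le\eta\|X^*f\|_{L^2}+C\eta^{-1}\|f\|_{L^2}.
\]
Use the norm $N(f):=\|X^*f\|_{L^2}+K\|f\|_{L^2}$ on $\dom(X^*)$, which is complete because $X^*$ is closed. The aim is to make $\|M^{-1}T_{\mathbf V_s}\|_{N\to N}<1$, and the choices are made in this order:
\begin{itemize}
\item first fix $\delta$ small compared with $1-|v|$;
\item then choose $\eta\lesssim(1-|v|)/\|\mathbf V_0\|_{L^2}$;
\item finally choose $K$ large enough to absorb the $\eta^{-1}\|f\|$ and $C\|f\|$ contributions.
\end{itemize}
Since $\|\mathbf V_s\|_{L^2}\le\|\mathbf V_0\|_{L^2}$ is fixed independently of $\delta$, these choices are consistent. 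On $L^2$ itself, $\|M^{-1}T_{\mathbf V_s}\|\le\delta/(1-|v|)<1$, so $M+T_{\mathbf V_s}$ is invertible on $L^2_+$. Its inverse is given by the same Neumann series, which also converges in $N$. Hence
\[
\varphi=(M+T_{\mathbf V_s})^{-1}g\in\dom(X^*).
\]

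\textbf{Main obstacle.} The delicate step is Step 2–3: the commutator of $X^*$ with a Toeplitz operator produces the boundary term $I_+(f)\,\Pi_+(\cdots)$. This term is not controlled by $\|f\|_{L^2}$, and its coefficient $\|\mathbf V_s\|_{L^2}$ is not small. I expect to handle it exactly through the interpolation inequality for $I_+$ and the large weight $K$, as in Step 3. The $C_0$ hypothesis enters only to make $\|\mathbf V_s\|_{L^\infty}$ small; this smallness is what makes the $L^\infty$ part of the commutator bound contractive.
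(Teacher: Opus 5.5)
Your argument is correct. It shares the paper's skeleton: the $C_0$ hypothesis is used exactly as in the paper, to split $\mathbf V_0$ into a compactly supported piece and a tail. The compactly supported piece gives a source term in $\dom(X^*)$, and the tail has small $L^\infty$ norm and is inverted.

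The regularity mechanism is different. The paper works only with the given eigenfunction. It applies the difference quotient $\tau_\eta$ to the Fourier-side equation $\widehat\varphi=A_{\mathbf V_0}\widehat\varphi$. The commutator $[\tau_\eta,A_{\mathbf V_0-\mathbf V_{0,R}}]\widehat\varphi$ then converges to $\widehat{(\mathbf V_0-\mathbf V_{0,R})}$ times the fixed vector $\widehat\varphi(0^+)$. That vector exists because $\widehat\varphi=(v\id-\Ub_\infty)^{-1}\widehat{\mathbf V_0\varphi}$ on $\R_+$ with $\mathbf V_0\varphi\in L^1$. So the boundary term is just a constant vector, no estimate on $I_+$ is needed, and it suffices to invert $\id-A_{\mathbf V_0-\mathbf V_{0,R}}$ on $L^2(\R_+)$. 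The limit also gives an explicit formula for $X^*\varphi$.

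You instead prove an operator-level statement: $M^{-1}T_{\mathbf V_s}$ is a strict contraction of $\dom(X^*)$ for the weighted graph norm $N$. You pay for the boundary term $I_+(f)\,\Pi_+(x\mathbf V_s e)$ with the trace inequality $|I_+(f)|^2\le 4\pi\|f\|_{L^2}\|X^*f\|_{L^2}$ and a large weight $K$. This costs more bookkeeping, but it yields more. It shows that $(M+T_{\mathbf V_s})^{-1}$ preserves $\dom(X^*)$. Hence any $L^2_+$ solution of $(T_{\Ub_0}-v)H=F$ with $F\in\dom(X^*)$ lies in $\dom(X^*)$, which is exactly the fact the paper later invokes in Lemma \ref{tozero}. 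You also never need a priori continuity of $\widehat\varphi$ at $0^+$.

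Three small remarks:
\begin{itemize}
\item The obstacle you flag is milder than you think, since $\|\mathbf V_s\|_{L^2}=\|(1-\chi_R)\mathbf V_0\|_{L^2}\to0$ as $R\to\infty$.
\item In Step 2 the $L^\infty$ part of the bound naturally carries $|I_+(f)|$, coming from $\widehat{f_0}{}'=\widehat f{}'+I_+(f)\eu^{-\xi}$ on $\R_+$, rather than $\|f\|_{L^2}$. This is harmless after your interpolation step.
\item Your first displayed decomposition of $f$ is missing the factor $\tfrac{1}{2\pi}$ that appears in your definition of $f_0$.
\end{itemize}
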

\begin{proof}
	The eigenvalue equation gives
	\begin{align}\label{tzfc0}
		(v \id - \Ub_\infty)\varphi = \Pi_+(\mathbf{V}_0 \varphi).
	\end{align}
	The matrix $v\id-\Ub_\infty$ is invertible because $\Ub_\infty^2=\id$ and $|v|<1$. Fourier transformation of \eqref{tzfc0} yields
	\begin{align}\label{tzfc1}
		\widehat{\varphi}(\xi) = (v \id - \Ub_\infty)^{-1} \widehat{\mathbf{V}_0\varphi}(\xi)=(v \id - \Ub_\infty)^{-1} \frac{1}{2\pi} \int_0^\infty \widehat{\mathbf{V}}_0(\xi - \zeta) \widehat{\varphi}(\zeta) \, \mathrm{d}\zeta,\enspace \forall\; {\xi>0}.
	\end{align}
	Since $\mathbf{V}_0\varphi\in L^1(\mathbb{R})$, the right-hand side of \eqref{tzfc1} is continuous on $\mathbb{R}$, and thus
	\begin{align}\label{lianxu hds}
		I_+(\varphi) = \widehat{\varphi}(0^+) = (v \id - \Ub_\infty)^{-1} \widehat{\mathbf{V}_0\varphi}(0) = (v \id - \Ub_\infty)^{-1} \int_{\R} \mathbf{V}_0 \varphi \, \mathrm{d}x.
	\end{align}
	For $\eta>0$, define $\tau_\eta\psi(\xi)=[\psi(\xi+\eta)-\psi(\xi)]/\eta$ on $L^2(\R_+;\mathcal V)$.
	It suffices to prove convergence of $\tau_\eta\widehat{\varphi}$ in $L^2(\mathbb R_+;\mathcal V)$ as $\eta\to0$.

	Let $\mathbf{V}\in \left(L^{\infty}\cap L^{2}\right)(\mathbb{R};M_d(\mathbb{C}))$. The convolution operator on $L^2(\R_+;\mathcal V)$ defined by
	$$
A_{\mathbf V}\psi(\xi):= (v \id - \Ub_\infty)^{-1} \frac{1}{2\pi} \int_0^\infty \widehat{\mathbf{V}}(\xi - \zeta) \psi(\zeta) \, \mathrm{d}\zeta
$$
	satisfies $\|A_{\mathbf{V}}\|_{\text{op}}\leq C\|\mathbf{V}\|_{L^\infty(\R)}$, where the constant $C$ depends only on $\Ub_\infty$ and $v$. Therefore, \eqref{tzfc1} can be written as
	\begin{align}\label{tzfc2}
		\widehat{\varphi}(\xi)=A_{\mathbf{V}_0}\widehat{\varphi}(\xi),\enspace \forall\;\xi> 0.
	\end{align}
	Let $R>0$ and $\mathbf{V}_{0, R}(x):=\mathbbm{1}_{|x|\leq R}\mathbf{V}_0(x)$. Then \eqref{tzfc2} can be written as
	\begin{align}\label{tzfc3}
		\widehat{\varphi}(\xi) - A_{\mathbf{V}_0 - \mathbf{V}_{0, R}}\widehat{\varphi}(\xi) = A_{\mathbf{V}_{0, R}}\widehat{\varphi}(\xi),\enspace \forall \xi>0.
	\end{align}
	Applying $\tau_\eta$ to \eqref{tzfc3} yields
	$$
(\id - A_{\mathbf{V}_0 - \mathbf{V}_{0, R}})\tau_\eta \widehat{\varphi}= \tau_\eta A_{\mathbf{V}_{0, R}}\widehat{\varphi} + [\tau_\eta, A_{\mathbf{V}_0 - \mathbf{V}_{0, R}}]\widehat{\varphi}.
$$
	Since $\mathbf V_0\in C_0$, the bound $\|A_{\mathbf V_0-\mathbf V_{0,R}}\|\le C\|\mathbf V_0\|_{L^\infty(|x|>R)}\to0$ allows us to fix $R$ for which $\id-A_{\mathbf V_0-\mathbf V_{0,R}}$ is invertible.

Direct calculation shows that $\tau_\eta A_{\mathbf{V}_{0, R}}\widehat{\varphi}$ converges in $L^2(\mathbb{R}_+;\mathcal{V})$ to $-\ii A_{x \mathbf{V}_{0,R}}\widehat{\varphi}$ as $\eta\to 0$. By continuity of $\widehat\varphi$ on $[0,\infty)$, $[\tau_\eta, A_{\mathbf{V}_0 - \mathbf{V}_{0, R}}]\widehat{\varphi}$ converges in $L^2(\mathbb{R}_+;\mathcal{V})$ to $\left(v \id - \Ub_\infty\right)^{-1} \frac{1}{2\pi} \left(\widehat{\mathbf{V}_0}- \widehat{\mathbf{V}_{0, R}}\right)\cdot\widehat{\varphi}(0^+)$ as $\eta\to 0$. Therefore, $\tau_\eta \widehat{\varphi}$ converges in $L^2(\mathbb{R}_+;\mathcal{V})$ to
	$$
\left(\id - A_{\mathbf{V}_0 - \mathbf{V}_{0, R}}\right)^{-1}\left( -\ii A_{x \mathbf{V}_{0,R}}\widehat{\varphi} + \left(v \id - \Ub_\infty\right)^{-1} \frac{1}{2\pi} \left(\widehat{\mathbf{V}_0}- \widehat{\mathbf{V}_{0, R}}\right)\cdot\widehat{\varphi}(0^+) \right)
$$
	as $\eta\to0$, proving $\varphi\in\dom(X^*)$.
\end{proof}
\begin{remark}
	Since $\varphi\in\dom(X^*)$ in Proposition \ref{varphiin}, by \cite[Lemma 3.2]{gerard2025global}, we have
	\begin{align}\label{pre wu}
		\left(T_{\Ub_0} - v \id\right)X^*\varphi =\left[T_{\Ub_0},X^*\right]\varphi= -\frac{\ii}{2\pi} \Pi_+ \mathbf{V}_0\cdot I_+(\varphi).
	\end{align}
\end{remark}
\begin{corollary}\label{hds}
	Under the assumptions of Proposition \ref{varphiin}, we have
	$$
\left| \int_{\R} \mathbf{V}_0(x) \varphi(x) \, \mathrm{d}x \right|_{\mathcal{V}}^2=\left(1-v^2\right)\left|I_+(\varphi)\right|_{\mathcal{V}}^2.
$$
\end{corollary}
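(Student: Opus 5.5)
The plan is to combine the formula \eqref{lianxu hds} for the boundary value $I_+(\varphi)$ with the commutator identity \eqref{pre wu}, and then do a short finite-dimensional computation using $\Ub_\infty^2=\id$. Set $a:=I_+(\varphi)\in\mathcal V$. By \eqref{lianxu hds},
\begin{align*}
\int_{\R}\mathbf V_0\varphi\,\mathrm dx=(v\id-\Ub_\infty)\,a .
\end{align*}
Since $\Ub_\infty$ is Hermitian with $\Ub_\infty^2=\id$, and $\Ub_\infty$ acts on $\mathcal V$ by left multiplication in both cases $\mathcal V=\C^d$ and $\mathcal V=M_d(\C)$, it is self-adjoint for $\langle\cdot,\cdot\rangle_{\mathcal V}$. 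Expanding the square then gives
\begin{align*}
\left|(v\id-\Ub_\infty)a\right|_{\mathcal V}^2=\langle (v^2\id-2v\Ub_\infty+\id)a,a\rangle_{\mathcal V}=(1-v^2)|a|_{\mathcal V}^2+2v\bigl(v|a|_{\mathcal V}^2-\langle\Ub_\infty a,a\rangle_{\mathcal V}\bigr).
\end{align*}
So the claim reduces to the orthogonality relation
\begin{align*}
\langle (v\id-\Ub_\infty)a,a\rangle_{\mathcal V}=0, \quad\text{equivalently}\quad \Bigl\langle \int_{\R}\mathbf V_0\varphi\,\mathrm dx,\;a\Bigr\rangle_{\mathcal V}=0 .
\end{align*}
When $v=0$ the correction term vanishes anyway, but the argument below covers all $v$.

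To obtain this relation, pair \eqref{pre wu} with $\varphi$ in $L^2_+$. This is legitimate because $\varphi\in\dom(X^*)$ by Proposition~\ref{varphiin}. On the left, self-adjointness of $T_{\Ub_0}$ and the eigenvalue equation give
\begin{align*}
\langle (T_{\Ub_0}-v\id)X^*\varphi,\varphi\rangle_{L^2}=\langle X^*\varphi,(T_{\Ub_0}-v\id)\varphi\rangle_{L^2}=0 ,
\end{align*}
using that $v$ is real. On the right we get $-\frac{\ii}{2\pi}\langle\Pi_+(\mathbf V_0a),\varphi\rangle_{L^2}$. Since $\varphi\in L^2_+$, the projection $\Pi_+$ can be dropped. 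Here $\mathbf V_0a\in L^2$ because $\mathbf V_0\in H^{1/2}\subset L^2$ and $a$ is a constant. Hence $\langle\mathbf V_0a,\varphi\rangle_{L^2}=0$.

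Next, $\mathbf V_0(x)$ is Hermitian pointwise, because $\Ub_0$ and $\Ub_\infty$ are. So, after complex conjugation,
\begin{align*}
\Bigl\langle \int_{\R}\mathbf V_0\varphi\,\mathrm dx,\,a\Bigr\rangle_{\mathcal V}=\int_{\R}\langle \varphi,\mathbf V_0a\rangle_{\mathcal V}\,\mathrm dx=\overline{\langle\mathbf V_0a,\varphi\rangle_{L^2}}=0 .
\end{align*}
Exchanging the integral with the inner product is allowed because $\mathbf V_0\varphi\in L^1$. Substituting $\int\mathbf V_0\varphi=(v\id-\Ub_\infty)a$ gives exactly the required orthogonality, and the expansion above finishes the proof.

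The only delicate point is justifying the pairing of \eqref{pre wu} with $\varphi$. This needs $X^*\varphi\in L^2_+$ (from Proposition~\ref{varphiin}) and the convention that $\Pi_+\mathbf V_0\cdot I_+(\varphi)$ means $\Pi_+(\mathbf V_0 a)$, with $\mathbf V_0$ multiplying from the left. Both match how \eqref{pre wu} is used; everything else is algebra.
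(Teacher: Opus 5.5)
Your proposal is correct and follows essentially the same route as the paper. You pair \eqref{pre wu} with $\varphi$ to obtain $\langle b,a\rangle_{\mathcal V}=0$ for $a=I_+(\varphi)$ and $b=\int_{\R}\mathbf V_0\varphi\,\mathrm dx=(v\id-\Ub_\infty)a$, then expand $|b|_{\mathcal V}^2$ using $\Ub_\infty^*=\Ub_\infty$ and $\Ub_\infty^2=\id$. The only cosmetic difference is that you drop $\Pi_+$ directly by orthogonality (since $\varphi\in L^2_+$), whereas the paper rewrites the pairing with $\Pi_-\mathbf V_0$ and implicitly uses $\int_{\R}(\Pi_+\mathbf V_0)\varphi\,\mathrm dx=0$; your version is slightly more direct.
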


\begin{proof}
Set $a=I_+(\varphi)$ and $b=\int_\R\mathbf V_0(x)\varphi(x)\,\mathrm dx$.
Taking the inner product of \eqref{pre wu} with $\varphi$ gives
$$
0=\langle\Pi_+\mathbf V_0a,\varphi\rangle_{L^2}
 =\left\langle a,\int_\R\Pi_-\mathbf V_0(x)\varphi(x)\,\mathrm dx\right\rangle_{\mathcal V}
 =\langle a,b\rangle_{\mathcal V}.
$$
By \eqref{lianxu hds}, $b=(v\id-\Ub_\infty)a$, so
$\langle a,\Ub_\infty a\rangle_{\mathcal V}=v|a|_{\mathcal V}^2$.
Since $\Ub_\infty^*=\Ub_\infty$ and $\Ub_\infty^2=\id$,
$$
|b|_{\mathcal V}^2=v^2|a|_{\mathcal V}^2
 -2v\operatorname{Re}\langle a,\Ub_\infty a\rangle_{\mathcal V}
 +|\Ub_\infty a|_{\mathcal V}^2=(1-v^2)|a|_{\mathcal V}^2.
$$
\end{proof}
\begin{proposition}[Distorted Plancherel formula]\label{Plancherel}
	Let $\Ub_0=\Ub_\infty+\mathbf{V}_0$ satisfy the conditions \eqref{stronginitial 1} and \eqref{stronginitial 2} with $s=1/2$, and assume that $\mathbf V_0\in C_0(\R;M_d(\C))$. For $\mathcal V=\C^d$, suppose that $T_{\Ub_0}$ has simple discrete spectrum $(v_n)_{1\le n\le\omega}$, where $\omega\in\N^*\cup\{\infty\}$. Then
	\begin{align}\label{DPF}
		\left\|\Pi_+\mathbf{V}_0\right\|_{L^2}^2=\sum_{n= 1}^{\omega}\left(1-v_n^2\right)\left|I_+(\varphi_n)\right|_{\mathcal{V}}^2,
	\end{align}
	where $(\varphi_n)_{1\leq n\leq\omega}\subset L^2_+(\mathbb{R};\mathbb{C}^d)$ is the family of normalized eigenfunctions corresponding to $(v_n)_{1\le n\le\omega}$.
\end{proposition}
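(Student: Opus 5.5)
Here $\mathcal V=\C^d$, so $\mathbf V_0$ is matrix-valued and $\Pi_+\mathbf V_0$ is a $d\times d$ matrix of functions. Let $c_k$ denote its columns. Then $\|\Pi_+\mathbf V_0\|_{L^2}^2=\sum_{k=1}^d\|\Pi_+(\mathbf V_0e_k)\|_{L^2}^2$, where $e_k$ is the standard basis of $\C^d$. The plan is to expand each column $\Pi_+\mathbf V_0 e_k\in L^2_+(\R;\C^d)$ in a spectral decomposition of $T_{\Ub_0}$, and to show that only the discrete part contributes. Then the coefficients can be computed through the eigenfunctions via the remark \eqref{pre wu}.

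\textbf{Step 1: $\Pi_+\mathbf V_0e_k$ lies in $\Hfr_1$.} For $f\in\Hfr_0=\ker H_{\Ub_0}$ we have $\Ub_0 f\in L^2_+$. We want $\langle \Pi_+\mathbf V_0 a,f\rangle=0$ for every constant vector $a$. I would use the pairing
$$\langle \Pi_+\mathbf V_0 a,f\rangle=\Big\langle a,\textstyle\int \mathbf V_0 f\Big\rangle \quad (\text{as in Corollary \ref{hds}}),$$
valid for $f$ with enough decay, and then extend by approximation. Next write $\mathbf V_0f=\Ub_0f-\Ub_\infty f\in L^2_+$. For $f\in\Hfr_0\cap\dom(X^*)$ its Fourier transform at $0^+$ gives $\int\mathbf V_0 f=\widehat{\mathbf V_0 f}(0)$, which is the boundary value of an $L^2_+$ function. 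The cleanest route is instead through \eqref{pre wu}-type commutator identities, namely $[T_{\Ub_0},X^*]f=-\tfrac{\ii}{2\pi}\Pi_+\mathbf V_0\,I_+(f)$. This shows that $\ran$ of the rank-$d$ operator $a\mapsto\Pi_+\mathbf V_0a$ is obtained from commutators, and pairing with $f\in\Hfr_0$ (where $T_{\Ub_0}^2=\id$) should yield orthogonality. I expect this step to be the main obstacle. An alternative is to approximate by rational data, where it is known from \cite{gerard2025global} that $\Pi_+\mathbf V_0a\in\Hfr_1$, and to pass to the limit. The difficulty there is that $\Hfr_1$ varies with the data, so I would instead characterize the condition as $\langle \Pi_+\mathbf V_0 a, (\id-K_{\Ub_0})^{N}g\rangle\to0$ and check it for the rational approximants, using $K_{\Ub_{0,j}}\to K_{\Ub_0}$ in trace norm.

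\textbf{Step 2: spectral expansion.} On $\Hfr_1$, $K_{\Ub_0}$ is compact with trivial kernel, and $\Hfr_1$ reduces $T_{\Ub_0}$. Since $T_{\Ub_0}^2=\id-K_{\Ub_0}$, the simplicity assumption implies that $\Hfr_1$ has the orthonormal basis $(\varphi_n)$. An eigenvalue $\lambda$ of $K$ on $\Hfr_1$ corresponds to eigenvalues $\pm\sqrt{1-\lambda}$ of $T$, and each such eigenspace is one-dimensional by hypothesis. Parseval then gives
$$\|\Pi_+\mathbf V_0\|_{L^2}^2=\sum_{k}\sum_n|\langle \Pi_+\mathbf V_0e_k,\varphi_n\rangle|^2.$$

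\textbf{Step 3: computing the coefficients.} As in Corollary \ref{hds},
$$\langle \Pi_+\mathbf V_0e_k,\varphi_n\rangle=\Big\langle e_k,\int\mathbf V_0^*\varphi_n\Big\rangle=\overline{\Big(\int\mathbf V_0\varphi_n\Big)_k},$$
using that $\mathbf V_0$ is Hermitian. Summing over $k$ gives $|b_n|^2$ with $b_n=\int\mathbf V_0\varphi_n$. Proposition \ref{varphiin} places $\varphi_n\in\dom(X^*)$, so Corollary \ref{hds} yields $|b_n|^2=(1-v_n^2)|I_+(\varphi_n)|^2$. This gives \eqref{DPF}, with convergence of the series following from Parseval.
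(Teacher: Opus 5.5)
Your Steps 2 and 3 coincide with the paper's proof: Parseval in the orthonormal eigenbasis $(\varphi_n)$ of $\Hfr_1$, the column identity $\sum_k|\langle\Pi_+\mathbf V_0e_k,\varphi_n\rangle_{L^2}|^2=\bigl|\int_\R\mathbf V_0\varphi_n\,\mathrm dx\bigr|_{\mathcal V}^2$ using $\mathbf V_0=\mathbf V_0^*$, and then Corollary \ref{hds}.

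\textbf{The gap is Step 1, which you explicitly leave open.} Without it you only get Bessel's inequality, $\|\Pi_+\mathbf V_0\|_{L^2}^2\geq\sum_n(1-v_n^2)|I_+(\varphi_n)|_{\mathcal V}^2$, not the identity. None of your three sketches closes it as written.
\begin{itemize}
\item The commutator route via \eqref{pre wu} is never turned into an orthogonality statement.
\item The rational-approximation route requires interchanging $N\to\infty$ with $j\to\infty$. That needs uniform-in-$j$ control of the spectral mass of $\Pi_+\mathbf V_{0,j}a$ near $0$ for $K_{\Ub_{0,j}}$. Trace-norm convergence of $K_{\Ub_{0,j}}$ does not supply this, because $\ker K_{\Ub}$ does not vary continuously.
\item The first route stalls at ``$\widehat{\mathbf V_0f}(0)$ is the boundary value of an $L^2_+$ function.'' By itself this says nothing, since boundary values at $0^+$ need not vanish (e.g.\ $I_+(\varphi_n)\neq0$). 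Restricting to $f\in\Hfr_0\cap\dom(X^*)$ would also require a density statement you do not have.
\end{itemize}

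\textbf{How to close it.} No decay, domain condition or approximation is needed. Take any $f\in\Hfr_0=\ker H_{\Ub_0}$.
\begin{itemize}
\item First, $\mathbf V_0f=\Ub_0f-\Ub_\infty f\in L^2_+$.
\item Second, $\mathbf V_0f\in L^1$ by Cauchy--Schwarz, since $\mathbf V_0\in L^2$ and $f\in L^2$.
\item Hence $\widehat{\mathbf V_0f}$ is continuous on $\R$ and vanishes on $(-\infty,0)$. So $\int_\R\mathbf V_0f\,\mathrm dx=\widehat{\mathbf V_0f}(0)=0$.
\item Therefore $\langle\Pi_+\mathbf V_0a,f\rangle_{L^2}=\bigl\langle a,\int_\R\mathbf V_0f\,\mathrm dx\bigr\rangle_{\mathcal V}=0$ for every $a\in\C^d$.
\end{itemize}
The vanishing is forced by the continuity of the Fourier transform across $\xi=0$, which comes from the $L^1$ property. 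This gives $\Pi_+\mathbf V_0e_k\in\Hfr_1$. The paper obtains the same membership by invoking the proof of \cite[Proposition 4.2]{gerard2025global}. With this inserted, your proof is complete and matches the paper's.
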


\begin{proof}
	The family $(\varphi_n)_{1\leq n\leq\omega}$ forms an orthonormal basis of the closed subspace $\mathfrak{H}_1(\C^d)$ in \eqref{subspace}. Let $e_1,\ldots,e_d$ be the standard basis of $\C^d$.
	Applying the proof of \cite[Proposition 4.2]{gerard2025global} to the case $\mathcal{V}=\mathbb{C}^d$ shows that for each $j=1,2,\dots,d$, the function $\Pi_+ \mathbf{V}_0\cdot e_j$ belongs to $\mathfrak{H}_1(\C^d)$. Parseval's identity in $\mathfrak H_1(\C^d)$ gives
	$$
\|\Pi_+ \mathbf{V}_0\|_{L^2}^2 = \sum_{j=1}^d \|\Pi_+ \mathbf{V}_0\cdot e_j\|_{L^2}^2 = \sum_{j=1}^d \sum_{n=1}^{\omega} |\langle \Pi_+ \mathbf{V}_0\cdot e_j, \varphi_n\rangle_{L^2}|^2,
$$
	where
	\begin{align}
		\sum_{j=1}^d|\langle \Pi_+ \mathbf{V}_0\cdot e_j, \varphi_n\rangle_{L^2}|^2=& \sum_{j=1}^d |\langle \mathbf{V}_0\cdot e_j, \varphi_n\rangle_{L^2}|^2
		=\sum_{j=1}^d |e_j^* \int_{\R}  \mathbf{V}_0(x)\varphi_n(x) \, \mathrm{d}x|^2 \label{identity}\\
		=&\left| \int_{\mathbb{R}} \mathbf{V}_0(x) \varphi_n(x) \, \mathrm{d}x \right|_{\mathcal{V}}^2=(1-v_n^2)|I_+(\varphi_n)|_{\mathcal{V}}^2.\notag
	\end{align}
	Summing and applying Corollary \ref{hds} gives \eqref{DPF}.
\end{proof}
\begin{remark}
	For finite or countable discrete spectrum, define as in \cite{gerard2025global}
	$$
\Phi_{n,j}:= \left (0, \ldots, \underbrace{\varphi_n}_{\mbox{$j$-th column}}, \ldots, 0 \right ),\enspace 1\leq n\leq\omega,\enspace 1\leq j\leq d.
$$
	Then
	$$
T_{\Ub_0} \Phi_{n,j} = v_n \Phi_{n,j}, \enspace 1\leq n\leq\omega,\enspace 1\leq j\leq d.
$$
	Furthermore, $(\Phi_{n,j})_{1\leq n\leq\omega,1\leq j\leq d}$ forms an orthonormal basis of $\mathfrak{H}_1(M_d(\mathbb{C}))$.
\end{remark}

\section{Weak convergence}\label{section weak}
Let $\Ub_0=\Ub_\infty+\mathbf V_0$ satisfy \eqref{stronginitial 1} and \eqref{stronginitial 2} with $s=1/2$. Assume that $T_{\Ub_0}$ on $L^2_+(\R;\C^d)$ has simple discrete spectrum $(v_n)_{1\leq n\leq\omega}$ and
\begin{align}\label{infty sum bound}
 \sum_{n=1}^{\omega}\sqrt{1-v_n^2}<\infty.
\end{align}
The next lemma gives $\mathbf V_0\in C_0(\R;M_d(\C))$, so the domain results and distorted Plancherel formula of Section \ref{section eigenfunctions} apply.
\begin{lemma}\label{spectral Linfty}
Let $\Ub=\Ub_\infty+\mathbf V\in H^{1/2}_\bullet(\R;\Gr_k(\C^d))$, and let $\{v_n\}_{1\leq n\leq\omega}$ be the discrete eigenvalues of $T_\Ub$ on $L^2_+(\R;\C^d)$, counted with multiplicity. If \eqref{infty sum bound} holds, then $\mathbf V$ has a representative in $C_0(\R;M_d(\C))$, and
$$
 \|\mathbf V\|_{L^\infty}\lesssim_d\|\mathbf V\|_{\dot B^1_{1,1}}\lesssim_d\|H_\Ub\|_{\mathfrak S_1}=\sum_{n=1}^{\omega}\sqrt{1-v_n^2}.
$$
\end{lemma}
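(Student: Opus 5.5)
We prove the three relations in the order written, ending with the identity $\|H_\Ub\|_{\mathfrak S_1}=\sum_n\sqrt{1-v_n^2}$. That identity is pure spectral bookkeeping. Since $K_\Ub=H_\Ub^*H_\Ub$, the singular values of $H_\Ub$ are $\sqrt{\lambda_n}$, where $(\lambda_n)$ are the positive eigenvalues of $K_\Ub$ counted with multiplicity. By \eqref{TandK}, $T_\Ub^2=\id-K_\Ub$. As recalled in Section \ref{Preliminaries}, the discrete spectrum of $T_\Ub$ is exactly $\{\varepsilon_n\sqrt{1-\lambda_n}\}$, with multiplicities matching through $\Hfr_1$, which reduces $T_\Ub$. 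Hence $\lambda_n=1-v_n^2$, and $\|H_\Ub\|_{\mathfrak S_1}=\sum_n\sqrt{\lambda_n}=\sum_n\sqrt{1-v_n^2}<\infty$ under \eqref{infty sum bound}.

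The middle inequality is Peller's theorem. A Hankel operator with symbol $\mathbf V$ is trace class if and only if $\Pi_-\mathbf V\in\dot B^1_{1,1}$, with equivalent norms, up to constants depending on $d$ for matrix symbols. I would apply it entrywise. Note that $H_\Ub=H_{\mathbf V}$, since the constant $\Ub_\infty$ contributes nothing to $\Pi_-$. For each pair $e_i,e_j$, the scalar Hankel operator with symbol $\mathbf V_{ij}$ is a compression of $H_\Ub$ by coordinate injections and projections. Its trace norm is therefore at most $\|H_\Ub\|_{\mathfrak S_1}$, which controls $\|\Pi_-\mathbf V_{ij}\|_{\dot B^1_{1,1}}$.

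The positive-frequency part is handled with self-adjointness. Since $\Ub$ is Hermitian, $\mathbf V^*=\mathbf V$, so $\Pi_+\mathbf V=(\Pi_-\mathbf V)^*$ up to the zero-frequency point, which is irrelevant for $L^2$ functions. Hence $\Pi_+\mathbf V_{ij}=\overline{\Pi_-\mathbf V_{ji}}$, and $\|\mathbf V\|_{\dot B^1_{1,1}}\lesssim_d\|H_\Ub\|_{\mathfrak S_1}$.

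The first inequality is the embedding $\dot B^1_{1,1}(\R)\hookrightarrow C_0$. Write the Littlewood--Paley decomposition $\mathbf V=\sum_k P_k\mathbf V$. Bernstein's inequality gives $\|P_k\mathbf V\|_{L^\infty}\lesssim 2^k\|P_k\mathbf V\|_{L^1}$, so the series converges absolutely in $L^\infty$. Each piece $P_k\mathbf V$ is in $L^2$ with compact Fourier support, hence continuous and vanishing at infinity. The uniform limit is therefore in $C_0$, with $\|\mathbf V\|_{L^\infty}\le\sum_k\|P_k\mathbf V\|_{L^\infty}\lesssim\|\mathbf V\|_{\dot B^1_{1,1}}$.

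It remains to check that this sum equals $\mathbf V$ and not $\mathbf V$ plus a constant or polynomial, which is the usual ambiguity for homogeneous spaces. This holds because $\mathbf V\in L^2$. The partial sums converge to $\mathbf V$ in $L^2$, and since they also converge uniformly, the limit identifies the representative.

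The main obstacle I anticipate is citing Peller's theorem on $\R$ with the present normalization. The exact norm convention matters little, since only $\lesssim$ is claimed. The real points are transferring the scalar result to $d\times d$ matrix symbols, done entrywise as above, and making sure no low-frequency term is lost, which the $L^2$ assumption handles.
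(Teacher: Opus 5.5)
Your proposal is correct and follows the paper's proof. You identify the singular values of $H_\Ub$ as $\sqrt{1-v_n^2}$ through $T_\Ub^2=\id-K_\Ub$, bound $\|\mathbf V\|_{\dot B^1_{1,1}}$ by Peller's theorem together with the Hermitian symmetry of $\mathbf V$, and obtain the $C_0$ representative from Bernstein's inequality on a Littlewood--Paley decomposition, using $\mathbf V\in L^2$ to remove the low-frequency ambiguity. The only differences are minor. You apply the Hankel form of Peller's theorem entrywise with $\Pi_+\mathbf V_{ij}=\overline{\Pi_-\mathbf V_{ji}}$, whereas the paper uses the commutator $[\mathcal H,\mathbf V]$ and the splitting $L^2=L^2_+\oplus L^2_-$. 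You also identify the uniform sum via $L^2$ convergence of the partial sums, whereas the paper uses $\|S_j\mathbf V\|_{L^\infty}\lesssim 2^{j/2}\|\mathbf V\|_{L^2}\to0$.
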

\begin{proof}
By \eqref{TandK}, the positive eigenvalues of $K_\Ub=H_\Ub^*H_\Ub$ are $1-v_n^2$, counted with multiplicity. Hence
$$
\|H_{\mathbf V}\|_{\mathfrak S_1}=\|H_\Ub\|_{\mathfrak S_1}=\sum_{n=1}^{\omega}\sqrt{1-v_n^2}<\infty.
$$
Peller's theorem \cite{peller2003hankel} and the decomposition $L^2=L^2_+\oplus L^2_-$ \cite{lenzmann2018short} give
\begin{align}\label{fourAtBS1}
 \|\mathbf V\|_{\dot B^1_{1,1}}\lesssim\|[\mathcal H,\mathbf V]\|_{\mathfrak S_1}\lesssim\|H_{\mathbf V}\|_{\mathfrak S_1}.
\end{align}
Let $(\Delta_j)_{j\in\mathbb Z}$ be a homogeneous Littlewood--Paley decomposition, with low-frequency cut-off $S_j$. Bernstein's inequality and \eqref{fourAtBS1} yield
\begin{align}\label{fourBernstein1}
 \sum_{j\in\mathbb Z}\|\Delta_j\mathbf V\|_{L^\infty}\lesssim\sum_{j\in\mathbb Z}2^j\|\Delta_j\mathbf V\|_{L^1}<\infty.
\end{align}
Since each $\Delta_j\mathbf V$ belongs to $C_0(\R;M_d(\C))$, the series converges uniformly in this space. Moreover,
\begin{align}\label{fourBernstein2}
 \|S_j\mathbf V\|_{L^\infty}\lesssim2^{j/2}\|\mathbf V\|_{L^2}\to0\enspace\text{as }j\to-\infty.
\end{align}
The distributional identity $\mathbf V-S_j\mathbf V=\sum_{k\geq j}\Delta_k\mathbf V$ and \eqref{fourBernstein1}--\eqref{fourBernstein2} identify the uniform sum with $\mathbf V$. Thus $\mathbf V\in C_0(\R;M_d(\C))$, and the same estimates give $\|\mathbf V\|_{L^\infty}\lesssim\|\mathbf V\|_{\dot B^1_{1,1}}$; see also \cite[Proposition 2.39]{bahouri2011fourier}.
\end{proof}
The explicit formula of Proposition \ref{global solution matrix} reads $\Pi_+\mathbf V(t,z)=\Omega_t(\Pi_+\mathbf V_0)(z)$, where
$$
 \Omega_tF(z):=\frac{1}{2\pi\ii}I_+\big((X^*+tT_{\Ub_0}-z\id)^{-1}F\big),\qquad z\in\C_+.
$$
\begin{lemma}\label{Omega jie}
For $t\geq0$ and $F\in L^2_+(\R;M_d(\C))$, one has $\Omega_tF\in L^2_+(\R;M_d(\C))$, $\|\Omega_tF\|_{L^2}=\|F\|_{L^2}$, and
$$
 |\Omega_tF(z)|_{\mathcal V}\leq\frac{\|F\|_{L^2}}{2\sqrt{\pi\operatorname{Im}z}},\qquad z\in\C_+.
$$
\end{lemma}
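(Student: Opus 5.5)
Note first that $\Omega_t=\mathcal O_{tT_{\Ub_0}}$ in the notation of Lemma \ref{observation contraction}, with $B=tT_{\Ub_0}$ bounded and self-adjoint. The transform there is defined for $\mathcal V=\C^d$, but $M_d(\C)$ with the Frobenius inner product is isometric to $(\C^d)^d$ column by column, and $X^*$, $T_{\Ub_0}$, $I_+$ all act columnwise. So Lemma \ref{observation contraction} applies with $\mathcal V=M_d(\C)$, and the membership $\Omega_tF\in L^2_+(\R;M_d(\C))$ follows directly. For the isometry, we invoke Lemma \ref{OtTisometry} with $T=T_{\Ub_0}$ and $\widetilde T=T_{\Ub(t)}$. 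The required intertwining relation $T_{\Ub(t)}\mathcal O_{tT_0}=\mathcal O_{tT_0}T_0$ is exactly \eqref{intertwining identity}, obtained in the proof of Proposition \ref{global solution matrix}; it holds on $L^2_+(\R;\mathcal V)$ for either choice of $\mathcal V$, again columnwise. Hence $\|\Omega_tF\|_{L^2}=\|F\|_{L^2}$.

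For the pointwise bound, we use the representation from the proof of Lemma \ref{observation contraction}:
\begin{align*}
\Omega_tF(z)=\frac{1}{2\pi}\int_0^\infty \eu^{\ii\eta z}\,I_+\bigl(W_B(\eta)F\bigr)\,\mathrm d\eta .
\end{align*}
By Cauchy--Schwarz in $\eta$,
\begin{align*}
|\Omega_tF(z)|_{\mathcal V}\le\frac{1}{2\pi}\Bigl(\int_0^\infty \eu^{-2\eta\operatorname{Im}z}\,\mathrm d\eta\Bigr)^{1/2}\Bigl(\int_0^\infty|I_+(W_B(\eta)F)|_{\mathcal V}^2\,\mathrm d\eta\Bigr)^{1/2}.
\end{align*}
The first factor equals $(2\operatorname{Im}z)^{-1/2}$. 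By the identity $\|\mathcal O_BF\|_{L^2}^2=\frac1{2\pi}\int_0^\infty|I_+(W_B(\eta)F)|^2\,\mathrm d\eta$ together with the isometry, the second factor equals $\sqrt{2\pi}\,\|F\|_{L^2}$. Multiplying gives
\begin{align*}
|\Omega_tF(z)|_{\mathcal V}\le\frac{1}{2\pi}\,\frac{\sqrt{2\pi}}{\sqrt{2\operatorname{Im}z}}\,\|F\|_{L^2}=\frac{\|F\|_{L^2}}{2\sqrt{\pi\operatorname{Im}z}},
\end{align*}
which is the claimed bound. Alternatively, one can use the Hardy space reproducing kernel bound $|G(z)|\le\|G\|_{L^2}/(2\sqrt{\pi\operatorname{Im}z})$ for $G\in L^2_+$, applied to $G=\Omega_tF$, and then the isometry. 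Only the contraction property from Lemma \ref{observation contraction}(i) is actually needed for this estimate.

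The only point requiring care is the justification of the intertwining relation for general $H^{1/2}_\bullet$ data rather than rational data. That step is already done in Proposition \ref{global solution matrix} by passing to the weak limit in \eqref{rational explicit}. The restriction $t\ge0$ plays no role, since every step above is valid for all real $t$.
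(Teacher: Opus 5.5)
Your proposal is correct and follows the paper's proof. You identify $\Omega_t=\mathcal O_{tT_{\Ub_0}}$, take the isometry from Proposition \ref{global solution matrix} (Lemma \ref{OtTisometry} applied to the limiting intertwining identity), and obtain the pointwise bound by Fourier inversion $\widehat{\Omega_tF}(\eta)=I_+(W_B(\eta)F)$, Cauchy--Schwarz and Plancherel; the columnwise reduction to $\C^d$ is unnecessary, since Lemma \ref{observation contraction} is already stated for general $\mathcal V$, but it is harmless.
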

\begin{proof}
Proposition \ref{global solution matrix} gives the isometry $\Omega_t=\mathcal O_{tT_{\Ub_0}}$. Fourier inversion, Cauchy--Schwarz, and Plancherel yield
$$
 |\Omega_tF(z)|_{\mathcal V}\leq\frac1{2\pi}\left(\int_0^\infty e^{-2\xi\operatorname{Im}z}\,\mathrm d\xi\right)^{1/2}\|\widehat{\Omega_tF}\|_{L^2}=\frac{\|F\|_{L^2}}{2\sqrt{\pi\operatorname{Im}z}}.
$$
\end{proof}
\begin{lemma}\label{tozero}
If $F\in L^2_+(\R;M_d(\C))$ satisfies $F\perp\Phi_{n,j}$ for some $1\leq n\leq\omega$ and all $1\leq j\leq d$, then $\Omega_tF(z+tv_n)\to0$ as $t\to+\infty$ for every $z\in\C_+$.
\end{lemma}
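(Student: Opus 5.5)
The plan is to reduce, by density, to data of the form $F=(T_{\Ub_0}-v_n\id)G$ with $G\in\dom(X^*)$. For such data an explicit resolvent identity will give decay like $1/t$.

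\emph{Step 1: reduction by density.} Set $T=T_{\Ub_0}$ acting on $L^2_+(\R;M_d(\C))$. Let $E_n=\operatorname{span}\{\Phi_{n,1},\dots,\Phi_{n,d}\}=\ker(T-v_n\id)$; this uses the simplicity of $v_n$ and the remark following Proposition \ref{Plancherel}. Since
$$
\Omega_tF(z+tv_n)=\frac{1}{2\pi\ii}I_+\big((X^*+tT-tv_n\id-z\id)^{-1}F\big),
$$
Lemma \ref{Omega jie}, applied at the point $z+tv_n$ (which has the same imaginary part as $z$), gives the uniform bound
$$
|\Omega_tF(z+tv_n)|\le \frac{\|F\|_{L^2}}{2\sqrt{\pi\operatorname{Im}z}} .
$$
This bound is uniform in $t\ge0$. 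Hence it suffices to prove the decay for $F$ in a dense subset of $E_n^\perp$, and then pass to the general case by an $\varepsilon/3$ argument.

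\emph{Step 2: the dense subset.} The point $v_n$ is an isolated point of $\sigma(T)$, because it lies in the discrete spectrum in $(-1,1)$ and the essential spectrum is contained in $\{\pm1\}$. Therefore $T-v_n\id$ restricts to a bounded bijection of $E_n^\perp$ onto itself, with bounded inverse. Under the standing assumption \eqref{infty sum bound}, Lemma \ref{spectral Linfty} gives $\mathbf V_0\in C_0$. Proposition \ref{varphiin} then shows $\Phi_{n,j}\in\dom(X^*)$. Consequently the orthogonal projection onto $E_n^\perp$ maps $\dom(X^*)$ into itself, and $\mathcal D:=E_n^\perp\cap\dom(X^*)$ is dense in $E_n^\perp$. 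Since $T-v_n\id$ is a homeomorphism of $E_n^\perp$, the set $(T-v_n\id)\mathcal D$ is also dense in $E_n^\perp$.

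\emph{Step 3: the resolvent identity.} Take $F=(T-v_n\id)G$ with $G\in\mathcal D$, and write $A_t=X^*+t(T-v_n\id)-z\id$ on $\dom(X^*)$. Lemma \ref{observation contraction} shows $A_t$ is invertible for $z\in\C_+$. Since
$$
tF=A_tG-(X^*-z\id)G,
$$
we obtain
$$
A_t^{-1}F=\frac1t\Big(G-A_t^{-1}(X^*-z\id)G\Big).
$$
Applying $I_+/(2\pi\ii)$ gives
$$
\Omega_tF(z+tv_n)=\frac1t\Big(\tfrac{1}{2\pi\ii}I_+(G)-\Omega_t\big((X^*-z\id)G\big)(z+tv_n)\Big).
$$
The first term is a fixed vector. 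The second is bounded by $\|(X^*-z\id)G\|_{L^2}/(2\sqrt{\pi\operatorname{Im}z})$ by Lemma \ref{Omega jie}, uniformly in $t$. Thus $\Omega_tF(z+tv_n)=O(1/t)$, and Step 1 concludes.

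\emph{Expected obstacle.} The main point to verify is the density of $(T-v_n\id)\mathcal D$ in $E_n^\perp$. This needs two facts together. First, the eigenfunctions must lie in $\dom(X^*)$, so that projecting away from $E_n$ keeps us inside $\dom(X^*)$; this is exactly where Proposition \ref{varphiin} and the $C_0$ property from Lemma \ref{spectral Linfty} enter. Second, $v_n$ must be isolated, so that $T-v_n\id$ has a bounded inverse on $E_n^\perp$. The uniform pointwise bound of Lemma \ref{Omega jie}, which rests on the isometry property, is what makes the density argument legitimate.
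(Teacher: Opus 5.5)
Your proof is correct. Its core is the same as the paper's: the identity $A_t^{-1}(T-v_n\id)G=\frac1t\bigl(G-A_t^{-1}(X^*-z\id)G\bigr)$ gives $O(1/t)$ decay, and the uniform pointwise bound of Lemma \ref{Omega jie} extends this by density.

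The difference is in how the dense class is produced. The paper fixes $F\in\dom(X^*)\cap E_n^\perp$. It then has to show that the preimage $H=(T-v_n\id)^{-1}F$ again lies in $\dom(X^*)$. It does this by rerunning the difference-quotient argument of Proposition \ref{varphiin} with the extra source term $\widehat F$, which relies on $\mathbf V_0\in C_0$. You instead fix $G$ in the domain first and set $F=(T-v_n\id)G$, so no regularity statement about the inverse is needed.

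Your route actually needs less than you invoke. Since $T-v_n\id$ is bounded and $\dom(X^*)$ is dense, the set $(T-v_n\id)\dom(X^*)$ is already dense in $\ov{\ran(T-v_n\id)}=\ker(T-v_n\id)^\perp=E_n^\perp$. Consequently:
\begin{itemize}
\item you need neither Proposition \ref{varphiin} nor the isolation of $v_n$;
\item the contraction property of Lemma \ref{observation contraction}(i) suffices for the pointwise bound, so the isometry is not needed;
\item the argument shows that $\mathcal O_{tB}F(z+tv)\to0$ whenever $F\perp\ker(B-v\id)$, for any bounded self-adjoint $B$ and any $v\in\R$.
\end{itemize}
What the paper's route buys in exchange is an explicit $O(1/t)$ rate on a concretely described class, namely every $F\in\dom(X^*)$ orthogonal to the eigenspace, rather than on the less transparent image class $(T-v_n\id)\dom(X^*)$.
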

\begin{proof}
The functional calculus of $T_{\Ub_0}$ gives $H\in L^2_+(\R;M_d(\C))$ satisfying $(T_{\Ub_0}-v_n\id)H=F$. Thus
$$
 \widehat H(\xi)+(v_n\id-\Ub_\infty)^{-1}\widehat F(\xi)=(v_n\id-\Ub_\infty)^{-1}\widehat{\mathbf V_0H}(\xi),\qquad\xi>0.
$$
If $F\in\dom(X^*)\cap\{\Phi_{n,j}\}_{j=1}^d{}^\perp$, the argument of Proposition \ref{varphiin} gives $H\in\dom(X^*)$. The resolvent identity then yields
\begin{align*}
 \Omega_tF(z+tv_n)
 &=\frac{1}{2\pi\ii}I_+\left(\frac Ht-\frac1t(X^*+t(T_{\Ub_0}-v_n\id)-z\id)^{-1}(X^*H-zH)\right)\\
 &=\frac{I_+(H)}{2\pi\ii t}-\frac1t\Omega_t(X^*H-zH)(z+tv_n).
\end{align*}
Lemma \ref{Omega jie} bounds the last transform by $\|X^*H-zH\|_{L^2}/(2\sqrt{\pi\operatorname{Im}z})$, proving the limit. Since $\dom(X^*)\cap\{\Phi_{n,j}\}_{j=1}^d{}^\perp$ is dense in $\{\Phi_{n,j}\}_{j=1}^d{}^\perp$, the same bound extends the limit to every such $F$.
\end{proof}
Lemma \ref{Omega jie} gives a uniform $L^2$ bound, so the pointwise limits below also determine the weak limits.
\begin{lemma}\label{PiVlim}
For $1\leq n\leq\omega$,
$$
 \Pi_+\mathbf V(t,z+tv_n)\xrightarrow{t\to\pm\infty}\frac{A_n^\infty}{z-y_n^\infty+\ii\delta_n^\infty},\qquad z\in\C_+,
$$
where
\begin{align*}
 y_n^\infty&=\operatorname{Re}\langle X^*\varphi_n,\varphi_n\rangle_{L^2},\qquad
 \delta_n^\infty=-\operatorname{Im}\langle X^*\varphi_n,\varphi_n\rangle_{L^2}=\frac{|I_+(\varphi_n)|_{\mathcal V}^2}{4\pi}>0,\\
 A_n^\infty&=\frac{\ii}{2\pi}\sum_{j=1}^d\langle\Pi_+\mathbf V_0,\Phi_{n,j}\rangle_{L^2}I_+(\Phi_{n,j}).
\end{align*}
\end{lemma}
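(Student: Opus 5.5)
We decompose $\Pi_+\mathbf V_0$ along the eigenspace of $v_n$. Write
\begin{align*}
\Pi_+\mathbf V_0=\sum_{j=1}^d c_{n,j}\Phi_{n,j}+F_n,\qquad c_{n,j}=\langle\Pi_+\mathbf V_0,\Phi_{n,j}\rangle_{L^2},
\end{align*}
with $F_n\perp\Phi_{n,j}$ for all $j$. By Lemma \ref{tozero}, $\Omega_tF_n(z+tv_n)\to0$ as $t\to+\infty$. For $t\to-\infty$ we use the same argument with $t$ replaced by $-t$: the resolvent identity there only uses the isometry bound of Lemma \ref{Omega jie}, which holds for all $t\in\R$ by Proposition \ref{global solution matrix}. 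Since $\Pi_+\mathbf V(t,z+tv_n)=\Omega_t(\Pi_+\mathbf V_0)(z+tv_n)$, it therefore suffices to compute
\begin{align*}
\Omega_t\Phi_{n,j}(z+tv_n)=\frac{1}{2\pi\ii}I_+\big((X^*+t(T_{\Ub_0}-v_n\id)-z\id)^{-1}\Phi_{n,j}\big)
\end{align*}
in the limit $t\to\pm\infty$.

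The key step is the asymptotics of this resolvent applied to an eigenvector. Write $R_t(z)=(X^*+t(T_{\Ub_0}-v_n)-z)^{-1}$ and let $P$ be the orthogonal projection onto $\ker(T_{\Ub_0}-v_n)$, which has rank $d$ in the matrix setting. We use a Schur-complement/Feshbach reduction with respect to $P$. On $\ran(\id-P)$, the operator $t(T_{\Ub_0}-v_n)$ is large in the sense that it is invertible modulo the essential spectrum issue. Instead of splitting the space, we work directly with the ansatz
\begin{align*}
R_t(z)\Phi_{n,j}\approx\frac{1}{\langle X^*\varphi_n,\varphi_n\rangle-z}\,\Phi_{n,j}.
\end{align*}
Concretely, set $\mu=\langle X^*\varphi_n,\varphi_n\rangle_{L^2}=y_n^\infty-\ii\delta_n^\infty$, which is well defined by Proposition \ref{varphiin}. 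Then
\begin{align*}
(X^*+t(T_{\Ub_0}-v_n)-z)\Phi_{n,j}=(\mu-z)\Phi_{n,j}+G_j,\qquad G_j=(X^*-\mu)\Phi_{n,j}.
\end{align*}
Since $\langle G_j,\Phi_{n,j}\rangle=0$ and $G_j$ is orthogonal to $\Phi_{n,k}$ for $k\neq j$ (they live in different columns), $G_j\perp\ker(T_{\Ub_0}-v_n)$. Applying $R_t$ gives
\begin{align*}
R_t\Phi_{n,j}=\frac{\Phi_{n,j}-R_tG_j}{\mu-z},
\end{align*}
and hence
\begin{align*}
\Omega_t\Phi_{n,j}(z+tv_n)=\frac{1}{2\pi\ii}\cdot\frac{I_+(\Phi_{n,j})}{\mu-z}-\frac{\Omega_tG_j(z+tv_n)}{\mu-z}.
\end{align*}
The last term tends to $0$ by Lemma \ref{tozero} applied to $G_j\in L^2_+$. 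Note that $\mu-z\neq0$ because $\operatorname{Im}\mu=-\delta_n^\infty<0<\operatorname{Im}z$.

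Summing over $j$ gives the limit
\begin{align*}
\frac{1}{2\pi\ii}\,\frac{\sum_j c_{n,j}I_+(\Phi_{n,j})}{y_n^\infty-\ii\delta_n^\infty-z}=\frac{A_n^\infty}{z-y_n^\infty+\ii\delta_n^\infty},
\end{align*}
which matches $A_n^\infty$ since $-\frac{1}{2\pi\ii}=\frac{\ii}{2\pi}$. It remains to identify $\delta_n^\infty$. By \eqref{graph and X}, $\operatorname{Im}\langle X^*\varphi_n,\varphi_n\rangle=-|I_+(\varphi_n)|^2/(4\pi)$. Positivity of $\delta_n^\infty$ requires $I_+(\varphi_n)\neq0$. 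If $I_+(\varphi_n)=0$, then \eqref{pre wu} gives $(T_{\Ub_0}-v_n)X^*\varphi_n=0$, so $X^*\varphi_n\in\ker(T_{\Ub_0}-v_n)=\C\varphi_n$ by simplicity. Then $\varphi_n$ is an eigenvector of $X^*$, and $\widehat\varphi_n$ solves $\ii\widehat\varphi_n'=c\widehat\varphi_n$ with $\widehat\varphi_n(0)=0$, which forces $\varphi_n=0$, a contradiction.

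The main obstacle is justifying that Lemma \ref{tozero} applies to $G_j$ and in the regime $t\to-\infty$. The first point is only the orthogonality $G_j\perp\Phi_{n,k}$, which is immediate from the column structure. For the second, I would recheck that the proof of Lemma \ref{tozero} uses only the isometry bound of Lemma \ref{Omega jie} and the resolvent identity, both of which are valid for negative $t$.
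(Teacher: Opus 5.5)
Your proposal is correct and follows the paper's proof. It uses the same splitting $\Pi_+\mathbf V_0=\sum_j c_{n,j}\Phi_{n,j}+F_n$ and the same identity $(X^*+t(T_{\Ub_0}-v_n\id)-z\id)\Phi_{n,j}=(\mu-z)\Phi_{n,j}+(X^*-\mu)\Phi_{n,j}$, with the remainder orthogonal to every $\Phi_{n,k}$, and it applies Lemma \ref{tozero} to both error terms. The only deviations are minor and valid: you prove $I_+(\varphi_n)\neq0$ directly from \eqref{pre wu}, simplicity, and uniqueness for $\ii\widehat\varphi_n'=c\widehat\varphi_n$ instead of citing \cite[Lemma 7.1]{gerard2025global}, and you handle $t\to-\infty$ by rerunning Lemma \ref{tozero} for negative times instead of using the paper's time-reversal symmetry $\Ub^-(t)=-\Ub(-t)$ (this is legitimate, since the pointwise bound of Lemma \ref{Omega jie} only needs the contraction of $\mathcal O_{tT_{\Ub_0}}$, which holds for all $t\in\R$).
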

\begin{proof}
The proof of \cite[Lemma 7.1]{gerard2025global} gives $-\operatorname{Im}\langle X^*\varphi_n,\varphi_n\rangle_{L^2}=|I_+(\varphi_n)|_{\mathcal V}^2/(4\pi)>0$. Set
$$
 F_n:=\Pi_+\mathbf V_0-\sum_{j=1}^d\langle\Pi_+\mathbf V_0,\Phi_{n,j}\rangle_{L^2}\Phi_{n,j}.
$$
Then $F_n\perp\Phi_{n,j}$ for all $j$, so Lemma \ref{tozero} gives $\Omega_tF_n(z+tv_n)\to0$ as $t\to+\infty$. Write $a_n=\langle X^*\varphi_n,\varphi_n\rangle_{L^2}$ and set
$$
 \Psi_{n,j}:=\frac{\Phi_{n,j}}{a_n-z},\qquad
 G_{n,j}:=\frac{X^*\Phi_{n,j}-a_n\Phi_{n,j}}{a_n-z},\qquad 1\leq j\leq d.
$$
Since $(X^*+t(T_{\Ub_0}-v_n\id)-z\id)\Psi_{n,j}=G_{n,j}+\Phi_{n,j}$,
$$
 \Omega_t\Phi_{n,j}(z+tv_n)=\frac{I_+(\Psi_{n,j})}{2\pi\ii}-\Omega_tG_{n,j}(z+tv_n).
$$
Direct calculation gives $G_{n,j}\perp\Phi_{n,k}$ for every $k$. Lemma \ref{tozero} therefore gives
$$
 \Pi_+\mathbf V(t,z+tv_n)\to\frac{\ii}{2\pi}\sum_{j=1}^d\frac{\langle\Pi_+\mathbf V_0,\Phi_{n,j}\rangle_{L^2}I_+(\Phi_{n,j})}{z-a_n}=\frac{A_n^\infty}{z-y_n^\infty+\ii\delta_n^\infty}.
$$
The map $\Ub^-(t,x):=-\Ub(-t,x)$ solves \eqref{HWMm} in $\Gr_{d-k}(\C^d)$ with initial data $-\Ub_0$. Its Toeplitz operator is $-T_{\Ub_0}$, with eigenfunction $\varphi_n$ associated with $-v_n$, and
$$
 y_n^{-,\infty}=y_n^\infty,\qquad\delta_n^{-,\infty}=\delta_n^\infty,\qquad A_n^{-,\infty}=-A_n^\infty.
$$
Writing $s=-t\to+\infty$ gives
$$
 \Pi_+\mathbf V(t,z+tv_n)=-\Pi_+\mathbf V^-(s,z-sv_n)\to-\frac{A_n^{-,\infty}}{z-y_n^{-,\infty}+\ii\delta_n^{-,\infty}}=\frac{A_n^\infty}{z-y_n^\infty+\ii\delta_n^\infty}.
$$
\end{proof}
The Toeplitz eigenvalue equation identifies these limit profiles as solitary waves.
\begin{lemma}\label{is soliton}
For $1\leq n\leq\omega$, set
$$
 h_n:=I_+(\varphi_n),\qquad\alpha_{n,j}:=\langle\Pi_+\mathbf V_0,\Phi_{n,j}\rangle_{L^2},\qquad
 \eta_n:=\frac1{2\pi\ii}\sum_{j=1}^d\overline{\alpha_{n,j}}e_j.
$$
Then $h_n\neq0$, $\delta_n^\infty=|h_n|_{\mathcal V}^2/(4\pi)>0$, $A_n^\infty=h_n\eta_n^*$, and
$$
 A_n^\infty\neq0,\qquad\rank(A_n^\infty)=1,\qquad(A_n^\infty)^2=0.
$$
Moreover,
\begin{align}\label{AU UA}
 A_n^\infty\left(\Ub_\infty+\frac{\ii A_n^{\infty *}}{2\delta_n^\infty}\right)
 +\left(\Ub_\infty+\frac{\ii A_n^{\infty *}}{2\delta_n^\infty}\right)A_n^\infty=0.
\end{align}
Consequently,
$$
 \Qv_{v_n}(x):=\Ub_\infty+\frac{A_n^\infty}{x-y_n^\infty+\ii\delta_n^\infty}+\frac{A_n^{\infty *}}{x-y_n^\infty-\ii\delta_n^\infty}
$$
belongs to $\mathcal{R}at(\R;\Gr_k(\C^d))$ and satisfies
\begin{align}\label{s w e}
 -v_n\Qv_{v_n}'=-\frac{\ii}{2}[\Qv_{v_n},|D|\Qv_{v_n}].
\end{align}
Thus $\Qv_{v_n}(x-v_nt)$ is a traveling solitary wave of \eqref{HWMm}.
\end{lemma}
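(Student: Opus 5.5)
The plan is to reduce everything to the two vectors
\[
h_n=I_+(\varphi_n),\qquad b_n:=\int_{\R}\mathbf V_0(x)\varphi_n(x)\,\mathrm dx\in\C^d ,
\]
and then verify the algebraic identities directly. By Lemma \ref{spectral Linfty}, $\mathbf V_0\in C_0$, so Proposition \ref{varphiin}, \eqref{lianxu hds} and Corollary \ref{hds} are available. They give
\[
b_n=(v_n\id-\Ub_\infty)h_n,\qquad \langle h_n,b_n\rangle_{\mathcal V}=0,\qquad |b_n|^2_{\mathcal V}=(1-v_n^2)|h_n|_{\mathcal V}^2 .
\]
The relation $\langle h_n,b_n\rangle_{\mathcal V}=0$ is the identity obtained inside the proof of Corollary \ref{hds}.

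\textbf{Basic properties of $h_n$ and $A_n^\infty$.} Lemma \ref{PiVlim} already gives $\delta_n^\infty=|h_n|^2_{\mathcal V}/(4\pi)>0$, hence $h_n\neq0$, and therefore also $b_n\neq0$. Next compute $A_n^\infty$. Since $I_+(\Phi_{n,j})=h_ne_j^{\top}$ and $\mathbf V_0$ is Hermitian,
\[
\alpha_{n,j}=\operatorname{Tr}\int_{\R}\mathbf V_0\Phi_{n,j}^*\,\mathrm dx=\int_{\R}\varphi_n^*\mathbf V_0e_j\,\mathrm dx=\overline{e_j^*b_n}.
\]
It follows that $\sum_j\alpha_{n,j}e_j^{\top}=b_n^*$, and hence
\[
A_n^\infty=\frac{\ii}{2\pi}\,h_nb_n^*=h_n\eta_n^* ,
\]
which matches the stated definition of $\eta_n$. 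From this form, $A_n^\infty\neq0$ and $\rank A_n^\infty=1$ because $h_n,b_n\neq0$. Also
\[
(A_n^\infty)^2=-\frac{1}{4\pi^2}\,h_n\,(b_n^*h_n)\,b_n^*=0
\]
by the orthogonality $\langle h_n,b_n\rangle_{\mathcal V}=0$.

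\textbf{The constraint \eqref{AU UA}.} Write $c=\ii/(2\pi)$ and use $\Ub_\infty^2=\id$. Then
\[
\Ub_\infty h_n=v_nh_n-b_n,\qquad \Ub_\infty b_n=-(1-v_n^2)h_n-v_nb_n .
\]
Substituting these gives
\[
A_n^\infty\Ub_\infty+\Ub_\infty A_n^\infty=-c\bigl[(1-v_n^2)h_nh_n^*+b_nb_n^*\bigr].
\]
For the quadratic terms,
\[
A_n^\infty A_n^{\infty*}=|c|^2(1-v_n^2)|h_n|^2h_nh_n^*,\qquad A_n^{\infty*}A_n^\infty=|c|^2|h_n|^2b_nb_n^* .
\]
Multiplying their sum by $\ii/(2\delta_n^\infty)=2\pi\ii/|h_n|^2$ gives exactly $c\bigl[(1-v_n^2)h_nh_n^*+b_nb_n^*\bigr]$. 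This cancels the linear part, which proves \eqref{AU UA}. The sign and normalization bookkeeping in this step, namely that $c=\ii/(2\pi)$ and the factor $4\pi$ in $\delta_n^\infty$ match, is the main point to check carefully.

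\textbf{Conclusion.} With $A\neq0$, $\rank A=1$, $A^2=0$, \eqref{AU UA}, $\delta_n^\infty>0$ and $\Ub_\infty\in\Gr_k(\C^d)$ all established, I would invoke the characterization of ground state solitary waves recalled at \eqref{solitonm} (from \cite{gerard2025global,lenzmann2018on}). It yields that $\Qv_{v_n}$ is rational, takes values in $\Gr_k(\C^d)$, and that $\Qv_{v_n}(x-vt)$ solves \eqref{HWMm} for the speed $v$ attached to it. The remaining point is to identify this speed with $v_n$. I would do this by checking that the Toeplitz operator of $\Qv_{v_n}$ has eigenvalue $v_n$. Concretely, the candidate eigenfunction is $(x-y_n^\infty-\ii\delta_n^\infty)^{-1}w$ for a suitable vector $w$, and $v_n$ is read off from the relation $\Ub_\infty h_n=v_nh_n-b_n$. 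Alternatively, one can insert the ansatz directly into \eqref{s w e}: the residues at the pole $y_n^\infty-\ii\delta_n^\infty$ produce a linear relation whose coefficient is $v_n$ by the same identities. Either route gives \eqref{s w e}.
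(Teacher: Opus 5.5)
Your treatment of $h_n$, of $A_n^\infty=\frac{\ii}{2\pi}h_nb_n^*=h_n\eta_n^*$, of $(A_n^\infty)^2=0$ and of the constraint \eqref{AU UA} is the paper's argument, with $b_n=2\pi\ii\,\eta_n$ in place of $\eta_n$. The normalizations check out ($c=\ii/(2\pi)$ against $|h_n|^2=4\pi\delta_n^\infty$).

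You diverge from the paper only in the last step. The paper does not invoke the characterization recalled at \eqref{solitonm}. As recalled there, it describes solitary waves and does not involve the speed, so you would need its converse in a precise form. Instead, the paper checks $\Qv_{v_n}^2=\mathbbm{1}_d$ directly from $A^2=0$, \eqref{AU UA} and $1/(pq)=(1/q-1/p)/(2\ii\delta)$, where $p=x-y+\ii\delta$ and $q=x-y-\ii\delta$. It gets the trace from $\Tr A=\eta^*h=0$, and then verifies \eqref{s w e} by substitution.

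\textbf{Your first route to the speed does not work as written.} The function $(x-y_n^\infty-\ii\delta_n^\infty)^{-1}w$ has its pole at $y_n^\infty+\ii\delta_n^\infty\in\C_+$. So it lies in $L^2_-$ and is not in the domain of $T_{\Qv_{v_n}}$. The correct eigenfunction is $p^{-1}h_n$. Using $Ah_n=0$, $A^*h_n=-\frac{\ii}{2\pi}|h_n|^2b_n$ and $\Pi_+(1/(pq))=-1/(2\ii\delta p)$, one gets
\[
T_{\Qv_{v_n}}(p^{-1}h_n)=p^{-1}(\Ub_\infty h_n+b_n)=v_np^{-1}h_n .
\]
Even with this fixed, an eigenvalue of $T_{\Qv_{v_n}}$ identifies the speed only if you also know that a solitary wave travels at the discrete eigenvalue of its own Lax operator. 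You neither prove nor precisely cite this. It can be extracted, for instance, from Proposition \ref{ylczgzfj} applied to the single soliton, but that is circuitous.

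\textbf{Your second route is what the paper does, and it closes the argument} once the ``linear relation'' is written out. From $\Ub_\infty h_n=v_nh_n-b_n$ and $\Ub_\infty b_n=-(1-v_n^2)h_n-v_nb_n$ one obtains
\[
[\Ub_\infty,A]=2v_nA+\frac{\ii}{2\delta}[A,A^*],\qquad [\Ub_\infty,A^*]=-2v_nA^*+\frac{\ii}{2\delta}[A,A^*].
\]
Use $|D|\Qv_{v_n}=\ii A/p^2-\ii A^*/q^2$ and $\Qv_{v_n}'=-A/p^2-A^*/q^2$. The cross terms of $[\Qv_{v_n},|D|\Qv_{v_n}]$ are
\[
-\ii[A,A^*]\Bigl(\frac{1}{pq^2}+\frac{1}{qp^2}\Bigr)=-\frac{1}{2\delta}[A,A^*]\Bigl(\frac{1}{q^2}-\frac{1}{p^2}\Bigr).
\]
They cancel the $[A,A^*]$ parts of the two commutators exactly, leaving $2\ii v_n(A/p^2+A^*/q^2)=-2\ii v_n\Qv_{v_n}'$. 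The same partial-fraction bookkeeping gives $\Qv_{v_n}^2=\mathbbm{1}_d$, so the appeal to the external characterization becomes unnecessary.
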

\begin{proof}
Write $v=v_n$, $\varphi=\varphi_n$, $h=h_n$, $\delta=\delta_n^\infty$, $A=A_n^\infty$, $\eta=\eta_n$, and $y=y_n^\infty$. Lemma \ref{PiVlim} gives $h\neq0$, $\delta=|h|_{\mathcal V}^2/(4\pi)>0$, and
$$
 A=\frac{\ii}{2\pi}\sum_{j=1}^d\alpha_{n,j}he_j^*=h\eta^*.
$$
Since $\alpha_{n,j}=\int_\R\Tr(\mathbf V_0\Phi_{n,j}^*)\,\mathrm dx=\int_\R\varphi^*\mathbf V_0e_j\,\mathrm dx$, one has
\begin{align}\label{b and eta}
 2\pi\ii\eta=\sum_{j=1}^d\overline{\alpha_{n,j}}e_j=\int_\R\mathbf V_0(x)\varphi(x)\,\mathrm dx=:b.
\end{align}
Equation \eqref{lianxu hds} gives $b=vh-\Ub_\infty h$, hence
\begin{align}\label{Uhidentity}
 \Ub_\infty h=vh-2\pi\ii\eta.
\end{align}
The proof of Corollary \ref{hds} gives $\langle h,b\rangle_{\mathcal V}=0$, so
\begin{align}\label{heta orthogonality}
 \langle h,\eta\rangle_{\mathcal V}=0.
\end{align}
Applying $\Ub_\infty$ to \eqref{Uhidentity} yields
\begin{align}\label{Uetaidentity}
 \Ub_\infty\eta=-v\eta-\frac{1-v^2}{2\pi\ii}h.
\end{align}
By \eqref{b and eta} and Corollary \ref{hds},
\begin{align}\label{eta m2}
 |\eta|_{\mathcal V}^2=\frac{|b|_{\mathcal V}^2}{4\pi^2}=\frac{(1-v^2)|h|_{\mathcal V}^2}{4\pi^2}=\frac{(1-v^2)\delta}{\pi}>0.
\end{align}
Thus $A=h\eta^*\neq0$, $\rank(A)=1$, and \eqref{heta orthogonality} gives $A^2=h\eta^*h\eta^*=0$ and $\Tr(A)=\eta^*h=0$. Equations \eqref{Uhidentity}, \eqref{Uetaidentity}, and \eqref{eta m2} give
\begin{align*}
 \Ub_\infty A+A\Ub_\infty
 &=(vh-2\pi\ii\eta)\eta^*+h\left(-v\eta-\frac{1-v^2}{2\pi\ii}h\right)^*\\
 &=-2\pi\ii\eta\eta^*+\frac{1-v^2}{2\pi\ii}hh^*=\frac{|\eta|_{\mathcal V}^2hh^*+|h|_{\mathcal V}^2\eta\eta^*}{2\ii\delta}
 =\frac{AA^*+A^*A}{2\ii\delta},
\end{align*}
which is \eqref{AU UA}.

Set $p=x-y+\ii\delta$, $q=x-y-\ii\delta$, and $S=AA^*+A^*A$. Using $1/(pq)=(1/q-1/p)/(2\ii\delta)$, $A^2=0$, and \eqref{AU UA}, we obtain
$$
\Qv_v^2-\mathbbm{1}_d
 =\frac{A^2}{p^2}+\frac{(A^*)^2}{q^2}+\frac{\Ub_\infty A+A\Ub_\infty-S/(2\ii\delta)}p
 +\frac{\Ub_\infty A^*+A^*\Ub_\infty+S/(2\ii\delta)}q=0.
$$
Since $\Qv_v=\Qv_v^*$ and $\Tr(\Qv_v)=\Tr(\Ub_\infty)=d-2k$, it follows that $\Qv_v(x)\in\Gr_k(\C^d)$.

To verify \eqref{s w e}, first note that \eqref{Uhidentity}, \eqref{Uetaidentity}, and \eqref{eta m2} imply
$$
 [\Ub_\infty,A]=2vA+\frac{\ii}{2\delta}[A,A^*],\qquad
 [\Ub_\infty,A^*]=-2vA^*+\frac{\ii}{2\delta}[A,A^*].
$$
Since $p^{-1}\in L^2_+(\R)$, $q^{-1}\in L^2_-(\R)$, and $D=-\ii\partial_x$,
$$
 |D|\Qv_v=\frac{\ii A}{p^2}-\frac{\ii A^*}{q^2},\qquad
 \Qv_v'=-\frac A{p^2}-\frac{A^*}{q^2}.
$$
The identity $1/(pq^2)+1/(qp^2)=(1/q^2-1/p^2)/(2\ii\delta)$ then gives
\begin{align*}
 [\Qv_v,|D|\Qv_v]
 &=\frac{\ii[\Ub_\infty,A]}{p^2}-\frac{\ii[\Ub_\infty,A^*]}{q^2}-\ii[A,A^*]\left(\frac1{pq^2}+\frac1{qp^2}\right)\\
 &=\frac{2\ii vA}{p^2}+\frac{2\ii vA^*}{q^2}=-2\ii v\Qv_v',
\end{align*}
proving \eqref{s w e}.
\end{proof}

\begin{remark}
Write $\Qv_{v_n}=\Ub_\infty+\Rb_{v_n}$, where
$$
 \Rb_{v_n}(x)=\frac{A_n^\infty}{x-y_n^\infty+\ii\delta_n^\infty}+\frac{A_n^{\infty *}}{x-y_n^\infty-\ii\delta_n^\infty}.
$$
The Pythagorean theorem and \eqref{identity} give $|A_n^\infty|_{\mathcal V}=\sqrt{1-v_n^2}|I_+(\varphi_n)|_{\mathcal V}^2/(2\pi)=2\sqrt{1-v_n^2}\,\delta_n^\infty$. Hence
\begin{align}\label{Rvn Linfty bound}
 \|\Rb_{v_n}\|_{L^\infty}\leq\frac{2|A_n^\infty|_{\mathcal V}}{\delta_n^\infty}=4\sqrt{1-v_n^2}.
\end{align}
The formula
$$
 \|\Pi_+\Rb_{v_n}\|_{\dot H^s}^2=2\pi\Gamma(2s+1)\frac{|A_n^\infty|_{\mathcal V}^2}{(2\delta_n^\infty)^{2s+1}}
$$
yields $\|\Rb_{v_n}\|_{L^2}=\sqrt{2}\|\Pi_+\Rb_{v_n}\|_{L^2}=\sqrt{2}\sqrt{1-v_n^2}|I_+(\varphi_n)|_{\mathcal V}$ and
\begin{align}\label{RvnH12}
 \|\Rb_{v_n}\|_{\dot H^{1/2}}=\sqrt{2}\|\Pi_+\Rb_{v_n}\|_{\dot H^{1/2}}=2\sqrt{\pi}\sqrt{1-v_n^2}.
\end{align}
\end{remark}

\section{Proof of Theorem \ref{infinitysolitons}: Soliton resolution in the $\dot{H}^{1/2}$ seminorm}\label{p of 1}
We prove the following result for \eqref{HWMm}.
\begin{proposition}[Soliton resolution for \eqref{HWMm} in the $\dot{H}^{1/2}$ seminorm]\label{infinitysolitons general}
Let $\Ub_0=\Ub_\infty+\mathbf V_0\in H^{1/2}_\bullet(\R;\Gr_k(\C^d))$ be non-rational. Suppose that the discrete spectrum $(v_n)_{1\leq n\leq\infty}$ of $T_{\Ub_0}$ on $L^2_+(\R;\C^d)$ is simple and satisfies \eqref{infty sum bound}. Then there exist ground state solitary wave profiles $(\mathbf Q_{v_n})_{1\leq n\leq\infty}$ such that
$$
\lim_{m\to\infty}\limsup_{t\to\pm\infty}\left\|\mathbf V(t)-\sum_{n=1}^m\left(\mathbf Q_{v_n}(\cdot-v_nt)-\Ub_\infty\right)\right\|_{H^{1/2}}=0
$$
and
$$
\lim_{t\to\pm\infty}\left\|\mathbf V(t)-\sum_{n=1}^\infty\left(\mathbf Q_{v_n}(\cdot-v_nt)-\Ub_\infty\right)\right\|_{\dot H^{1/2}}=0.
$$
\end{proposition}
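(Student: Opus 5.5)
We work with $\Pi_+\mathbf V(t)$, since $\mathbf V(t)$ is Hermitian and $\|\mathbf V\|_{H^{1/2}}^2$ is controlled by $\|\Pi_+\mathbf V\|_{H^{1/2}}^2$ up to the factor $2$. Write $\Pi_+\mathbf V(t)=\Omega_t\Pi_+\mathbf V_0$ and decompose
\[
\Pi_+\mathbf V_0=\sum_{n\le m}\sum_{j=1}^d\alpha_{n,j}\Phi_{n,j}+F_m,\qquad F_m\perp\Phi_{n,j}\ (n\le m).
\]
By the distorted Plancherel formula (Proposition \ref{Plancherel}), $\Pi_+\mathbf V_0\in\mathfrak H_1(M_d(\C))$. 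Hence $\|F_m\|_{L^2}^2=\sum_{n>m}(1-v_n^2)|I_+(\varphi_n)|^2\to0$. Since $\Omega_t$ is an isometry (Lemma \ref{Omega jie}), the remainder $\Omega_tF_m$ is $o_m(1)$ in $L^2$ uniformly in $t$.

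\textbf{Step 1: $L^2$ asymptotics of each mode.} For fixed $n$, I will upgrade Lemma \ref{PiVlim} to strong convergence:
\[
\Omega_t\Big(\sum_j\alpha_{n,j}\Phi_{n,j}\Big)(\cdot+tv_n)\to\Pi_+\Rb_{v_n}\quad\text{in }L^2.
\]
Pointwise convergence on $\C_+$ together with the uniform $L^2$ bound gives weak convergence. Strong convergence follows once the norms match. The left side has norm $\sqrt{1-v_n^2}\,|I_+(\varphi_n)|$, by isometry and \eqref{identity}. The right side has the same norm, $\|\Pi_+\Rb_{v_n}\|_{L^2}=\sqrt{1-v_n^2}|I_+(\varphi_n)|$, by the remark after Lemma \ref{is soliton}. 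Summing over $n\le m$ and adding the remainder, we obtain
\[
\limsup_{t\to\pm\infty}\Big\|\Pi_+\mathbf V(t)-\sum_{n\le m}\Pi_+\Rb_{v_n}(\cdot-v_nt)\Big\|_{L^2}\le\|F_m\|_{L^2}.
\]
Here the shifted profiles with distinct speeds decouple, so their cross terms vanish as $t\to\pm\infty$.

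\textbf{Step 2: the $\dot H^{1/2}$ level.} This is the main obstacle, since weak convergence in $\dot H^{1/2}$ does not come with norm control for a finite piece. I would use energy conservation and $\|\mathbf V_0\|_{\dot H^{1/2}}^2=4\pi\Tr K_{\Ub_0}=4\pi\sum_n(1-v_n^2)$. By \eqref{RvnH12}, this equals $\sum_n\|\Rb_{v_n}\|_{\dot H^{1/2}}^2$. So the total soliton energy already exhausts the conserved energy.

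Let $S_m(t):=\sum_{n\le m}\Rb_{v_n}(\cdot-v_nt)$. The cross terms $\langle|D|^{1/2}\Rb_{v_n}(\cdot-v_nt),|D|^{1/2}\Rb_{v_{n'}}(\cdot-v_{n'}t)\rangle$ tend to $0$ for $n\ne n'$, by translation separation. Therefore
\[
\|S_m(t)\|_{\dot H^{1/2}}^2\to\sum_{n\le m}4\pi(1-v_n^2).
\]
Next, Step 1 gives $\mathbf V(t)(\cdot+v_nt)\rightharpoonup\Rb_{v_n}$ in $H^{1/2}$, using boundedness of the energy. I would also show $\langle\mathbf V(t),S_m(t)\rangle_{\dot H^{1/2}}\to\sum_{n\le m}\|\Rb_{v_n}\|_{\dot H^{1/2}}^2$. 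This combines the weak convergence with the fact that, for each $n$, $|D|\Rb_{v_n}$ is a fixed $L^2$ function, so the pairing is continuous under the weak limit after translation. Expanding the square then yields
\[
\limsup_{t\to\pm\infty}\|\mathbf V(t)-S_m(t)\|_{\dot H^{1/2}}^2=4\pi\sum_{n>m}(1-v_n^2),
\]
which tends to $0$ as $m\to\infty$.

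Combining Steps 1 and 2 gives the first claim, since the $L^2$ error is at most $2\|F_m\|^2$.

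\textbf{Step 3: the infinite sum.} For the second claim, the series $\sum_n\Rb_{v_n}(\cdot-v_nt)$ converges in $\dot H^{1/2}$ uniformly in $t$, by \eqref{infty sum bound} and $\|\Rb_{v_n}\|_{\dot H^{1/2}}=2\sqrt\pi\sqrt{1-v_n^2}$. Its tail beyond $m$ is therefore bounded by $2\sqrt\pi\sum_{n>m}\sqrt{1-v_n^2}$. A triangle inequality with Step 2, followed by $m\to\infty$, then gives the full $\dot H^{1/2}$ convergence (the $\limsup$ is independent of $m$).

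Care is needed for the $t\to-\infty$ direction, which is handled by the symmetry $\Ub^-(t)=-\Ub(-t)$ used in Lemma \ref{PiVlim}. The theorem then follows by specializing to $d=2$.
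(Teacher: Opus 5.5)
Your proof is correct. For the $\dot H^{1/2}$ part and the passage to the infinite sum, it coincides with the paper's proof. You expand $\|\mathbf V(t)-S_m(t)\|_{\dot H^{1/2}}^2$ and use $\|\mathbf V(t)\|_{\dot H^{1/2}}^2=4\pi\sum_n(1-v_n^2)=\sum_n\|\Rb_{v_n}\|_{\dot H^{1/2}}^2$. You pair the weak limits of Lemma \ref{PiVlim} against the fixed $L^2$ functions $|D|\Rb_{v_n}$, use that distinct speeds decouple, and close with the summable tail bound $2\sqrt\pi\sum_{n>m}\sqrt{1-v_n^2}$.

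The only difference is at the $L^2$ level. The paper runs the same squared-norm expansion with $X=L^2$, using $\|\Pi_+\mathbf V(t)\|_{L^2}^2=\|\Pi_+\mathbf V_0\|_{L^2}^2=\sum_n\|\Pi_+\Rb_{v_n}\|_{L^2}^2$, which comes from the isometry and the distorted Plancherel formula. You instead split $\Pi_+\mathbf V_0$ along the eigenbasis $\Phi_{n,j}$ and control the tail $\Omega_tF_m$ by the isometry, uniformly in $t$. You then upgrade each modal weak limit to a strong one by matching $\|\sum_j\alpha_{n,j}\Phi_{n,j}\|_{L^2}=\|\Pi_+\Rb_{v_n}\|_{L^2}$.

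Your version gives slightly more: each spectral mode of the evolution converges strongly in $L^2$ to its translated soliton, and no cross-term analysis is needed in $L^2$. The paper's version handles both norms with a single computation. Since there is no $\dot H^{1/2}$ analogue of the modal isometry, you are right to use the expansion argument at that level.
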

\begin{proof}
Set
\begin{align}\label{guzijishu}
\Ub_\text{sol}^\infty(t,x):=\Ub_\infty+\sum_{n=1}^\infty\Rb_{v_n}(x-v_nt).
\end{align}
The spectral assumption gives
\begin{align}\label{series singular bound}
\sum_{n=1}^\infty\|\Rb_{v_n}\|_{\dot H^{1/2}}=2\sqrt\pi\sum_{n=1}^\infty\sqrt{1-v_n^2}<\infty,
\end{align}
so \eqref{guzijishu} converges in $\dot H^{1/2}(\R;M_d(\C))$, uniformly in $t$. For $m\geq1$, set
$$
\Ub_\text{sol}^m(t):=\Ub_\infty+\sum_{n=1}^m\Rb_{v_n}(\cdot-v_nt),\qquad
\mathbf r_m(t):=\Ub(t)-\Ub_\text{sol}^m(t).
$$
Write $f(t)=\Pi_+\mathbf V(t)$ and $g_n(t)=\Pi_+\Rb_{v_n}(\cdot-v_nt)$. For $X=L^2$ or $X=\dot H^{1/2}$, expansion of the squared norm gives
\begin{align*}
\|\Pi_+\mathbf r_m(t)\|_X^2
&=\|f(t)\|_X^2+\sum_{n=1}^m\|g_n(t)\|_X^2-2\operatorname{Re}\sum_{n=1}^m\langle f(t),g_n(t)\rangle_X\\
&\enspace+\sum_{\substack{1\leq j,k\leq m\\j\neq k}}\langle g_j(t),g_k(t)\rangle_X.
\end{align*}
The explicit formula, Proposition \ref{Plancherel}, and Lemma \ref{Omega jie} give
$$
\|f(t)\|_{L^2}^2=\|\Omega_t\Pi_+\mathbf V_0\|_{L^2}^2=\|\Pi_+\mathbf V_0\|_{L^2}^2=\sum_{n=1}^\infty\|\Pi_+\Rb_{v_n}\|_{L^2}^2.
$$
Energy conservation and \cite[Lemma 4.1]{gerard2025global} give
$$
\|f(t)\|_{\dot H^{1/2}}^2=2\pi\Tr K_{\Ub_0}=2\pi\sum_{n=1}^\infty(1-v_n^2)=\sum_{n=1}^\infty\|\Pi_+\Rb_{v_n}\|_{\dot H^{1/2}}^2.
$$
By Lemma \ref{PiVlim}, for each $n$,
$$
\langle f(t),g_n(t)\rangle_X\longrightarrow\|\Pi_+\Rb_{v_n}\|_X^2\qquad(t\to\pm\infty).
$$
For $X=\dot H^{1/2}$, this follows by writing the pairing as $\langle\Pi_+\mathbf V(t,\cdot+v_nt),|D|\Pi_+\Rb_{v_n}\rangle_{L^2}$. Distinct velocities also imply $\langle g_j(t),g_k(t)\rangle_X\to0$ for $j\neq k$. Hence
$$
\lim_{t\to\pm\infty}\|\Pi_+\mathbf r_m(t)\|_X^2=\sum_{n=m+1}^\infty\|\Pi_+\Rb_{v_n}\|_X^2\longrightarrow0\qquad(m\to\infty).
$$
Since $\mathbf r_m$ is Hermitian, this proves the assertion in $H^{1/2}$. Moreover,
$$
\limsup_{t\to\pm\infty}\|\Ub(t)-\Ub_\text{sol}^\infty(t)\|_{\dot H^{1/2}}
\leq\left(2\sum_{n=m+1}^\infty\|\Pi_+\Rb_{v_n}\|_{\dot H^{1/2}}^2\right)^{1/2}
+\sum_{n=m+1}^\infty\|\Rb_{v_n}\|_{\dot H^{1/2}}.
$$
Letting $m\to\infty$ and using \eqref{series singular bound} proves the remaining assertion.
\end{proof}
Theorem \ref{infinitysolitons} follows by taking $d=2$, $k=1$ in \eqref{HWMm}.

\section{Proof of Theorem \ref{infinitysolitonsLinfty}: Soliton resolution in the $L^\infty$ norm}\label{p of 2}
We prove the following result for \eqref{HWMm}.
\begin{proposition}[Soliton resolution for \eqref{HWMm} in the $L^\infty$ norm]\label{infinitysolitonsLinfty general}
Under the assumptions and notation of Proposition \ref{infinitysolitons general}, the series $\sum_{n=1}^\infty(\mathbf Q_{v_n}(x-v_nt)-\Ub_\infty)$ converges absolutely and uniformly in $x\in\R$, uniformly in $t\in\R$, and
$$
\lim_{t\to\pm\infty}\left\|\mathbf V(t)-\sum_{n=1}^\infty\left(\mathbf Q_{v_n}(\cdot-v_nt)-\Ub_\infty\right)\right\|_{L^\infty}=0.
$$
\end{proposition}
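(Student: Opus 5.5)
The absolute and uniform convergence of the soliton series is immediate from \eqref{Rvn Linfty bound}: $\sum_n\|\Rb_{v_n}(\cdot-v_nt)\|_{L^\infty}\le4\sum_n\sqrt{1-v_n^2}<\infty$, and this bound does not depend on $x$ or $t$. So the substance is the decay of $\|\mathbf V(t)-\sum_n\Rb_{v_n}(\cdot-v_nt)\|_{L^\infty}$.

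\textbf{Approach.} Control the $L^\infty$ norm through Lemma \ref{spectral Linfty}, that is, through the chain $\|\mathbf W\|_{L^\infty}\lesssim\|\mathbf W\|_{\dot B^1_{1,1}}\lesssim\|H_{\mathbf W}\|_{\mathfrak S_1}$, which holds for any $\mathbf W\in L^2$ whose Hankel operator is trace class. The trace norm itself is not small, so it must be combined with the energy convergence of Proposition \ref{infinitysolitons general} by a truncation.

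\textbf{Splitting.} Fix $m$. Write
\[
\mathbf V(t)-\sum_{n=1}^\infty\Rb_{v_n}(\cdot-v_nt)=\mathbf r_m(t)-\Ub_\infty-\sum_{n>m}\Rb_{v_n}(\cdot-v_nt).
\]
The tail sum has $L^\infty$ norm at most $4\sum_{n>m}\sqrt{1-v_n^2}$, which is small for large $m$, uniformly in $t$. It remains to show
\[
\lim_{m\to\infty}\limsup_{t\to\pm\infty}\|\mathbf W_m(t)\|_{L^\infty}=0,\qquad \mathbf W_m:=\mathbf V(t)-\sum_{n\le m}\Rb_{v_n}(\cdot-v_nt).
\]

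\textbf{Bounding $\|H_{\mathbf W_m}\|_{\mathfrak S_1}$.} Take $N\ge m$ and split
\[
H_{\mathbf W_m}=H_{\mathbf V(t)}-\sum_{n\le m}H_{\Rb_{v_n}(\cdot-v_nt)}.
\]
Let $P_N(t)$ be the spectral projection of $K_{\Ub(t)}$ onto its $N$ largest eigenvalues $1-v_1^2,\dots,1-v_N^2$, after ordering the spectrum decreasingly in $1-v_n^2$; these eigenvalues are preserved by Proposition \ref{global solution matrix}(ii). Then
\[
\|H_{\mathbf V(t)}(\id-P_N(t))\|_{\mathfrak S_1}=\sum_{n>N}\sqrt{1-v_n^2},
\]
which is small uniformly in $t$. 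The finite-rank part $H_{\mathbf V(t)}P_N(t)$ has rank at most $Nd$, as does $\sum_{n\le m}H_{\Rb_{v_n}}$, since each soliton Hankel operator has rank $\le d$ (rank one per column). Combining $\|A\|_{\mathfrak S_1}\le\sqrt{\rank A}\,\|A\|_{\mathfrak S_2}$ with $\|H_{\mathbf W}\|_{\mathfrak S_2}^2\simeq\|\Pi_-\mathbf W\|_{\dot H^{1/2}}^2$ gives
\begin{align*}
\|H_{\mathbf W_m}\|_{\mathfrak S_1}\le{}&\sqrt{2Nd}\,\Bigl\|H_{\mathbf V(t)}P_N(t)-\sum_{n\le m}H_{\Rb_{v_n}}\Bigr\|_{\mathfrak S_2}+\sum_{n>N}\sqrt{1-v_n^2}\\
\lesssim{}&\sqrt{Nd}\,\bigl(\|\mathbf W_m\|_{\dot H^{1/2}}+\|H_{\mathbf V(t)}(\id-P_N)\|_{\mathfrak S_2}\bigr)+\sum_{n>N}\sqrt{1-v_n^2}.
\end{align*}
By Proposition \ref{infinitysolitons general},
\[
\limsup_{t\to\pm\infty}\|\mathbf W_m\|_{\dot H^{1/2}}^2\lesssim\sum_{n>m}(1-v_n^2),
\]
and also $\|H_{\mathbf V}(\id-P_N)\|_{\mathfrak S_2}^2=\sum_{n>N}(1-v_n^2)$. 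This yields
\[
\limsup_{t}\|\mathbf W_m\|_{L^\infty}\lesssim\sqrt{N}\Bigl(\sum_{n>m}(1-v_n^2)\Bigr)^{1/2}+\sum_{n>N}\sqrt{1-v_n^2}.
\]

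\textbf{Main obstacle: choosing $N$ against $m$.} The factor $\sqrt N$ must be beaten by the tail $\bigl(\sum_{n>m}(1-v_n^2)\bigr)^{1/2}$, so a naive choice such as $N=m$ fails. I would choose $N=N(m)\to\infty$ with $N(m)\sum_{n>m}(1-v_n^2)\to0$. This is possible because $\sum_{n>m}(1-v_n^2)\le\bigl(\sum_{n>m}\sqrt{1-v_n^2}\bigr)^2\to0$, so $N(m)$ can grow as slowly as needed (for instance, the integer part of $(\sum_{n>m}(1-v_n^2))^{-1/2}$). If this bookkeeping breaks, the fallback is to avoid $P_N$ altogether: bound the $\mathfrak S_1$ norm of the rank-$\le Nd$ approximant to $H_{\mathbf W_m}$ by its best rank-$Nd$ approximation, using Ky Fan inequalities. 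Either way, letting $m\to\infty$ finishes the proof, and Theorem \ref{infinitysolitonsLinfty} follows with $d=2$, $k=1$.
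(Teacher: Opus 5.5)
Your reduction matches the paper's: Peller's theorem plus the Besov embedding reduce $L^\infty$ to the trace norm of the Hankel operator, and a finite-rank truncation then trades $\mathfrak S_1$ for $\sqrt{\mathrm{rank}}$ times $\mathfrak S_2$ plus a spectral tail, closed by energy convergence. But the step you single out as the main obstacle is not resolved, and your reasoning there is internally inconsistent. Write $a_n=\sqrt{1-v_n^2}$, ordered decreasingly. Your rank bound $\operatorname{rank}\bigl(H_{\mathbf V(t)}P_N(t)-\sum_{n\le m}H_{\Rb_{v_n}}\bigr)\le 2Nd$ requires $N\ge m$. So you actually need some $N(m)\ge m$ with $N(m)\sum_{n>m}a_n^2\to0$. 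Such a choice exists exactly when $m\sum_{n>m}a_n^2\to0$, that is, exactly when $N=m$ works. You assert that $N=m$ fails, and your justification $\sum_{n>m}a_n^2\le(\sum_{n>m}a_n)^2\to0$ gives no rate in $m$. Your recipe $N(m)=\lfloor(\sum_{n>m}a_n^2)^{-1/2}\rfloor$ can be smaller than $m$: for $a_n\sim 1/(n\log^2 n)$ it is of order $\sqrt m\log^2 m$. Then the rank count breaks and the prefactor is $\sqrt m$, not $\sqrt N$. The Ky Fan fallback does not help, because the obstruction is that $\|\mathbf W_m(t)\|_{\dot H^{1/2}}$ does not tend to zero as $t\to\pm\infty$.

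The missing ingredient is elementary. A nonincreasing summable sequence satisfies $\tfrac m2 a_{m+1}\le\sum_{n\ge m/2}a_n\to0$. Hence $m\sum_{n>m}a_n^2\le m a_{m+1}\sum_{n>m}a_n\to0$, and your bound with $N=m$ gives $\limsup_{t\to\pm\infty}\|\mathbf W_m(t)\|_{L^\infty}\lesssim(m\sum_{n>m}a_n^2)^{1/2}+\sum_{n>m}a_n\to0$. Two minor points: the $-\Ub_\infty$ in your splitting is spurious, since $\mathbf r_m(t)=\mathbf V(t)-\sum_{n\le m}\Rb_{v_n}(\cdot-v_nt)$. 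Also, $P_N(t)$ should be the projection onto the span of the first $N$ eigenfunctions of $T_{\Ub(t)}$, because $1-v_n^2$ can repeat when $v_{n+1}=-v_n$.

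The paper avoids any balancing by subtracting the whole series $\mathbf S(t)$ instead of the first $m$ solitons. The error $\mathbf F(t)=\mathbf V(t)-\mathbf S(t)$ tends to zero in $\dot H^{1/2}$ by the second assertion of Proposition \ref{infinitysolitons general}. Let $D_N$ be the difference of the rank-$N$ truncations of $H_{\mathbf V(t)}$ and of $\sum_n H_n(t)$. Fan's inequality $s_{2N+j}(C)\le s_j(C-D_N)$ then gives $\|C(t)\|_{\mathfrak S_1}\le\sqrt{2N}\,\|C(t)\|_{\mathfrak S_2}+2\sum_{n>N}a_n$. So $N$ is held fixed while $t\to\pm\infty$, and only afterwards $N\to\infty$. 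Fan's inequality is essential there. A triangle inequality in $\mathfrak S_2$, as in your draft, would produce $\sqrt N\,\|\sum_{n>N}H_n(t)\|_{\mathfrak S_2}$. That term is a priori only bounded by $\sqrt N\sum_{n>N}a_n$, since the $H_n$ are not orthogonal, and it need not vanish. Repaired as above, your finite-$m$ route is a valid alternative. It uses only the quantitative limit $\lim_{t\to\pm\infty}\|\Pi_+\mathbf r_m(t)\|_{\dot H^{1/2}}^2=2\pi\sum_{n>m}a_n^2$ from the proof of that proposition.
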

Set $\mathbf S(t):=\sum_{n=1}^\infty\Rb_{v_n}(\cdot-v_nt)$. By \eqref{Rvn Linfty bound} and \eqref{series singular bound}, this series converges absolutely and uniformly in $x$, uniformly in $t$. Each summand belongs to $C_0(\R;M_d(\C))$, so $\mathbf S(t)\in C_0(\R;M_d(\C))$. Define $A(t):=H_{\Ub(t)}=H_{\mathbf V(t)}$ and $B(t):=H_{\mathbf S(t)}$.
\begin{lemma}\label{Hankel singular values}
The operators $A(t),B(t)$ belong to $\mathfrak S_1$. The singular values of $A(t)$ are $a_n:=\sqrt{1-v_n^2}$, ordered as $a_1\geq a_2\geq\cdots>0$. Writing $H_n(t):=H_{\Rb_{v_n}(\cdot-v_nt)}$, one has $B(t)=\sum_{n=1}^\infty H_n(t)$ in $\mathfrak S_1$, and $a_n$ is the unique nonzero singular value of $H_n(t)$.
\end{lemma}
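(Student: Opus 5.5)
The plan is to derive the singular values of $A(t)$ from the identity $K_\Ub=H_\Ub^*H_\Ub=\id-T_\Ub^2$ together with spectral preservation, and then to analyze each $H_n(t)$ explicitly as a rank-one operator.

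\textbf{Step 1: singular values of $A(t)$.} Since $\mathbf V(t)$ differs from $\Ub(t)$ by a constant and $\Pi_-$ kills constants times $L^2_+$ functions, we have $H_{\Ub(t)}=H_{\mathbf V(t)}$. By \eqref{TandK}, $A(t)^*A(t)=K_{\Ub(t)}=\id-T_{\Ub(t)}^2$. Proposition \ref{global solution matrix}(ii) gives $\sigma_{\mathrm d}(T_{\Ub(t)})=\sigma_{\mathrm d}(T_{\Ub_0})$, and the multiplicities are transported as well, since the proof shows $\ker(T_{\Ub(t)}-v)=\mathcal O_{tT_0}\ker(T_0-v)$ with $\mathcal O_{tT_0}$ an isometry. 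The positive eigenvalues of $K_{\Ub(t)}$ are exactly the values $1-v^2$ with $v$ in the discrete spectrum, since $\sigma_{\mathrm e}\subset\{\pm1\}$ contributes only to $\ker K$.

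Here $\C^d$-valued eigenvectors are the relevant ones. Simplicity of $(v_n)$ on $L^2_+(\C^d)$ does not by itself give simple eigenvalues of $K$, since $v$ and $-v$ give the same $1-v^2$. Singular values counted with multiplicity, however, are still the sequence $a_n=\sqrt{1-v_n^2}$ after reordering. We reorder, as the statement does, and use \eqref{infty sum bound} to obtain $\|A(t)\|_{\mathfrak S_1}=\sum a_n<\infty$.

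\textbf{Step 2: the rank-one structure of $H_n(t)$.} The summand is
\[
\Rb_{v_n}(x-v_nt)=\frac{A_n^\infty}{x-c}+\frac{A_n^{\infty*}}{x-\bar c},\qquad c=y_n^\infty+v_nt-\ii\delta_n^\infty\in\C_- .
\]
The term $A_n^{\infty*}/(x-\bar c)$ times $f\in L^2_+$ stays in $L^2_+$, because $\bar c\in\C_+$ and so this factor is analytic in $\C_+$. The other term gives
\[
\Pi_-\Bigl(\frac{A_n^\infty f}{x-c}\Bigr)=\frac{A_n^\infty f(c)}{x-c},
\]
where $f(c)$ is the analytic extension of $f$ to $\C_+$ evaluated at the point conjugate-reflected appropriately. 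More precisely, we use the standard identity $\Pi_-\bigl(f/(x-c)\bigr)=f(\bar{\ }\cdot)$-type evaluation: writing $f(x)=f(\bar c)+(x-\bar c)g$ is not right for $c\in\C_-$, so we instead use the reproducing kernel of $L^2_+$ at $\bar c\in\C_+$. For the matrix or vector case we do the same componentwise.

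Either way, $H_n(t)f=\frac{1}{x-c}\,A_n^\infty\,\ell(f)$, where $\ell$ is a bounded linear functional proportional to $\langle f,k_{\bar c}\rangle$ with $k_{\bar c}(x)=\frac{\ii}{2\pi(x-c)}$. Since $A_n^\infty=h_n\eta_n^*$ has rank one, $H_n(t)$ has rank one. Its unique nonzero singular value is
\[
\|H_n(t)\|_{\mathrm{op}}=|h_n|\,|\eta_n|\,\Bigl\|\frac{1}{x-c}\Bigr\|_{L^2}\,\|k_{\bar c}\|_{L^2}\cdot\text{const}.
\]
We compute it using $\|1/(x-c)\|_{L^2}^2=\pi/\delta_n^\infty$, $|A_n^\infty|=2\sqrt{1-v_n^2}\,\delta_n^\infty$ from the remark, and the kernel norm, and expect to get $\sqrt{1-v_n^2}$ independent of $t$ by translation invariance. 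As a cross-check, Lemma \ref{spectral Linfty} applied to the soliton $\Qv_{v_n}$, whose Toeplitz operator has the single discrete eigenvalue $v_n$ by the rational theory, gives $\|H_{\Rb_{v_n}}\|_{\mathfrak S_1}=\sqrt{1-v_n^2}$ directly. I will use that route if the constant bookkeeping becomes messy.

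\textbf{Step 3: summation in $\mathfrak S_1$.} Since $\|H_n(t)\|_{\mathfrak S_1}=a_n$ is summable, the series $\sum_n H_n(t)$ converges absolutely in $\mathfrak S_1$. Its limit equals $H_{\mathbf S(t)}$ because $f\mapsto H_{\mathbf W}f$ is bounded by $\|\mathbf W\|_{L^\infty}$, and the partial sums of $\mathbf S(t)$ converge uniformly by \eqref{Rvn Linfty bound}. Hence the $\mathfrak S_1$-limit and the strong limit coincide, and $B(t)\in\mathfrak S_1$.

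The main obstacle is the multiplicity bookkeeping in Step 1, specifically whether the ordering $a_1\ge a_2\ge\cdots$ is compatible with the given labeling of the $v_n$. This is resolved by relabeling. The explicit constant in Step 2 is routine, given the soliton identity.
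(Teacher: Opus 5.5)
Your Steps 1 and 3 follow the paper's proof. The singular values of $A(t)$ come from $K_{\Ub(t)}=\id-T_{\Ub(t)}^2$ and preservation of the discrete spectrum. The series $\sum_n H_n(t)$ is identified with $B(t)$ through absolute convergence in $\mathfrak S_1$, together with $\|H_{\mathbf W}\|_{\mathrm{op}}\le\|\mathbf W\|_{L^\infty}$ and \eqref{Rvn Linfty bound}. Your remark that multiplicities are carried by $\ker(T_{\Ub(t)}-v\id)=\mathcal O_{tT_0}\ker(T_0-v\id)$ is correct and makes explicit what the paper leaves implicit. So is your remark that $\pm v$ merge in $K$, so the ordering is only a relabeling.

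Step 2, however, has the analyticity backwards. For $c=y_n^\infty+v_nt-\ii\delta_n^\infty\in\C_-$, the factor $1/(x-c)$ has its pole in $\C_-$, so it is the one holomorphic in $\C_+$ (the paper's $p^{-1}\in L^2_+$). The factor $1/(x-\bar c)$ has its pole at $\bar c\in\C_+$ and lies in $L^2_-$. Hence $A_n^\infty f/(x-c)\in L^2_+$ is annihilated by $\Pi_-$, and the whole Hankel operator comes from the other term. The decomposition you discarded is exactly the right one for that term:
\[
\frac{f(x)}{x-\bar c}=\frac{f(x)-f(\bar c)}{x-\bar c}+\frac{f(\bar c)}{x-\bar c},\qquad\text{hence}\qquad H_n(t)f=\frac{A_n^{\infty *}f(\bar c)}{x-\bar c}.
\]
This is the paper's formula. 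Your expression $\frac{1}{x-c}A_n^\infty\ell(f)$ takes values in $L^2_+$, which is impossible for an operator into $L^2_-$. If you follow your identification of the vanishing term literally, you get $H_n(t)=0$.

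The number you expect is nevertheless right. This is because $\|1/(x-c)\|_{L^2}=\|1/(x-\bar c)\|_{L^2}$, and $A_n^\infty$, $A_n^{\infty*}$ have the same rank and norm; that symmetry is what hides the slip. With the correct formula and $A_n^{\infty*}=\eta_nh_n^*$, one has
\[
H_n(t)f=\langle f,h_nk_{\bar c}\rangle_{L^2}\,\frac{\eta_n}{x-\bar c}.
\]
Its only singular value is
\[
|h_n|\,\|k_{\bar c}\|_{L^2}\,|\eta_n|\,\|1/(x-\bar c)\|_{L^2}=\frac{|h_n||\eta_n|}{2\delta_n^\infty}=\sqrt{1-v_n^2},
\]
using $\|k_{\bar c}\|_{L^2}^2=1/(4\pi\delta_n^\infty)$, $|h_n|^2=4\pi\delta_n^\infty$ and \eqref{eta m2}.

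The paper sidesteps these constants. Once $\rank H_n(t)=1$, the singular value equals the Hilbert--Schmidt norm, and $\|H_n(t)\|_{\mathfrak S_2}^2=\frac{1}{2\pi}\|\Pi_+\Rb_{v_n}\|_{\dot H^{1/2}}^2=1-v_n^2$ by \eqref{RvnH12} and \cite[Lemma 4.1]{gerard2025global}. Your fallback via Lemma \ref{spectral Linfty} applied to $\Qv_{v_n}$ is not self-contained. It needs that $T_{\Qv_{v_n}}$ has the single simple discrete eigenvalue $v_n$, which the paper never proves. Through $K=\id-T^2$, that fact carries the same information as the singular value you are trying to compute.
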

\begin{proof}
By \eqref{TandK}, preservation of the discrete spectrum, and \eqref{KandH}, the nonzero eigenvalues of $K_{\Ub(t)}=A(t)^*A(t)$ are $1-v_n^2$. Thus $A(t)$ has singular values $a_n$ and belongs to $\mathfrak S_1$ by \eqref{series singular bound}.

For each $n$, direct calculation gives
$$
(H_n(t)f)(x)=\frac{A_n^{\infty *}f(v_nt+y_n^\infty+\ii\delta_n^\infty)}{x-v_nt-y_n^\infty-\ii\delta_n^\infty},\qquad f\in L^2_+(\R;\C^d).
$$
Lemma \ref{is soliton} gives $\rank A_n^\infty=1$, hence $\rank H_n(t)=1$. By \eqref{RvnH12} and \cite[Lemma 4.1]{gerard2025global},
$$
s_1(H_n(t))^2=\Tr(H_n(t)^*H_n(t))=\frac1{2\pi}\|\Pi_+\Rb_{v_n}\|_{\dot H^{1/2}}^2=1-v_n^2=a_n^2.
$$
Consequently, $\sum_n H_n(t)$ converges in $\mathfrak S_1$. Its limit equals $B(t)$ because
$$
\left\|B(t)-\sum_{n=1}^N H_n(t)\right\|_{\mathrm{op}}\leq\sum_{n=N+1}^\infty\|\Rb_{v_n}\|_{L^\infty}\longrightarrow0\qquad(N\to\infty).
$$
\end{proof}
Lemma \ref{spectral Linfty} and Proposition \ref{global solution matrix} give $\mathbf V(t)\in C_0(\R;M_d(\C))$ and $\|\mathbf V(t)\|_{\dot B^1_{1,1}}\lesssim\sum_n a_n$. Therefore,
\begin{align}\label{FtC0}
\mathbf F(t):=\mathbf V(t)-\mathbf S(t)\in C_0(\R;M_d(\C)).
\end{align}
Set $C(t):=H_{\mathbf F(t)}=A(t)-B(t)\in\mathfrak S_1$. By \eqref{series singular bound}, $\mathbf F(t)\in\dot H^{1/2}(\R;M_d(\C))$. Since $\mathbf F(t)=\mathbf F(t)^*$, \cite[Lemma 4.1]{gerard2025global} gives
\begin{align}\label{CS2H}
\|C(t)\|_{\mathfrak S_2}\sim\|\mathbf F(t)\|_{\dot H^{1/2}}.
\end{align}
\begin{lemma}\label{tail trace}
For every $N\geq1$,
\begin{align}\label{tail trace norm estimate}
\|C(t)\|_{\mathfrak S_1}\leq\sqrt{2N}\,\|C(t)\|_{\mathfrak S_2}+2\sum_{n=N+1}^\infty a_n.
\end{align}
\end{lemma}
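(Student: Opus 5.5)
Our approach is a finite-rank truncation of $C(t)=A(t)-B(t)$: we split off the part of $C(t)$ carrying the $N$ largest singular values of $A(t)$ and the $N$ leading soliton Hankel operators, so that the remaining pieces have small trace norm. Let $\{a_n,e_n,f_n\}$ denote a Schmidt decomposition of $A(t)$. By Lemma \ref{Hankel singular values} we can write $A(t)=\sum_{n}a_n\langle\cdot,e_n\rangle f_n$ with orthonormal families $(e_n)\subset L^2_+$ and $(f_n)\subset L^2_-$. Set
\begin{align*}
A_N:=\sum_{n=1}^N a_n\langle\cdot,e_n\rangle f_n,\qquad B_N:=\sum_{n=1}^N H_n(t).
\end{align*}
Then $\rank A_N\le N$, and since each $H_n(t)$ has rank one, also $\rank B_N\le N$. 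Moreover, Lemma \ref{Hankel singular values} gives the tail bounds
\begin{align*}
\|A(t)-A_N\|_{\mathfrak S_1}=\sum_{n>N}a_n,\qquad \|B(t)-B_N\|_{\mathfrak S_1}\le\sum_{n>N}\|H_n(t)\|_{\mathfrak S_1}=\sum_{n>N}a_n.
\end{align*}

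Next we decompose
\begin{align*}
C(t)=(A_N-B_N)+(A(t)-A_N)-(B(t)-B_N).
\end{align*}
By the triangle inequality in $\mathfrak S_1$, the last two terms contribute at most $2\sum_{n>N}a_n$ in total. The first term $A_N-B_N$ has rank at most $2N$. For any operator $R$ of rank at most $r$, Cauchy--Schwarz on its at most $r$ nonzero singular values gives $\|R\|_{\mathfrak S_1}\le\sqrt r\,\|R\|_{\mathfrak S_2}$. Applying this with $r=2N$ yields $\|A_N-B_N\|_{\mathfrak S_1}\le\sqrt{2N}\|A_N-B_N\|_{\mathfrak S_2}$.

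The remaining task, and the only real obstacle, is to bound $\|A_N-B_N\|_{\mathfrak S_2}$ by $\|C(t)\|_{\mathfrak S_2}$ without picking up any additional tail terms. Note that $A_N-B_N=C(t)-(A-A_N)+(B-B_N)$, so a naive Hilbert--Schmidt estimate of the tails would add $\sqrt{2N}$ times the $\mathfrak S_2$-tails. Those are only bounded by $\sum_{n>N}a_n$, which would spoil the constant $2$ in \eqref{tail trace norm estimate}.

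To avoid this, we instead let $P$ be the orthogonal projection onto the span of the ranges of $A_N$ and $B_N$, a space of dimension at most $2N$, and split $C=PC+(\id-P)C$. For the first piece, $\rank PC\le2N$, so
\begin{align*}
\|PC\|_{\mathfrak S_1}\le\sqrt{2N}\|PC\|_{\mathfrak S_2}\le\sqrt{2N}\|C\|_{\mathfrak S_2}.
\end{align*}
For the second piece, $(\id-P)A_N=0$ and $(\id-P)B_N=0$, so $(\id-P)C=(\id-P)\bigl((A-A_N)-(B-B_N)\bigr)$. Since $\|\id-P\|_{\mathrm{op}}\le1$, its trace norm is at most $2\sum_{n>N}a_n$. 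Combining the two pieces gives \eqref{tail trace norm estimate} exactly.
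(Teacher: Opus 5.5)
Your argument is correct. It shares the paper's skeleton: the same truncations $A_N$ (the first $N$ Schmidt terms of $A(t)$) and $B_N=\sum_{n\le N}H_n(t)$, the same rank count $2N$, and the same tail bound $2\sum_{n>N}a_n$ from Lemma \ref{Hankel singular values}. Where you differ is in how the rank-$2N$ part is separated from the tails.

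The paper sets $D_N=A_N-B_N$ and invokes Fan's inequality $s_{2N+j}(C(t))\le s_j(C(t)-D_N)$. This gives $\sum_{n>2N}s_n(C(t))\le\|C(t)-D_N\|_{\mathfrak S_1}$. The paper then applies Cauchy--Schwarz to the first $2N$ singular values of $C(t)$ itself.

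You instead compress with the orthogonal projection $P$ onto $\ran A_N+\ran B_N\subset L^2_-$. Then $PC$ has rank at most $2N$, while $(\id-P)C=(\id-P)\bigl((A-A_N)-(B-B_N)\bigr)$. Your route needs only the ideal inequality $\|XY\|_{\mathfrak S_p}\le\|X\|_{\mathrm{op}}\|Y\|_{\mathfrak S_p}$ and the triangle inequality in $\mathfrak S_1$. It avoids singular-value interlacing altogether and is slightly more elementary. The paper's route gives, as a by-product, an intrinsic bound $\sum_{n>2N}s_n(C(t))\le 2\sum_{n>N}a_n$ on the singular-value tail of $C(t)$, with no auxiliary projection involved.

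Your preliminary detour through $\|A_N-B_N\|_{\mathfrak S_2}$ is not needed and can be deleted, since the projection argument stands on its own.
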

\begin{proof}
Let $A_N(t)$ be the truncation of $A(t)$ to its first $N$ singular values, $B_N(t):=\sum_{n=1}^N H_n(t)$, and $D_N(t):=A_N(t)-B_N(t)$. Lemma \ref{Hankel singular values} gives $\rank D_N(t)\leq2N$ and
$$
\|A(t)-A_N(t)\|_{\mathfrak S_1}=\sum_{n>N}a_n,\qquad
\|B(t)-B_N(t)\|_{\mathfrak S_1}\leq\sum_{n>N}a_n.
$$
Thus $\|C(t)-D_N(t)\|_{\mathfrak S_1}\leq2\sum_{n>N}a_n$. Fan's inequality \cite{fan1951maximum} (see also \cite[Theorem 1.7]{simon2005trace}) yields $s_{2N+j}(C(t))\leq s_j(C(t)-D_N(t))$ for $j\geq1$, so
$$
\sum_{n>2N}s_n(C(t))\leq\|C(t)-D_N(t)\|_{\mathfrak S_1}\leq2\sum_{n>N}a_n.
$$
Applying Cauchy--Schwarz to the first $2N$ singular values proves \eqref{tail trace norm estimate}.
\end{proof}
Peller's theorem \cite{peller2003hankel} and the orthogonal decomposition $L^2=L^2_+\oplus L^2_-$ \cite{lenzmann2018short} give
\begin{align}\label{FtBS1}
\|\mathbf F(t)\|_{\dot B^1_{1,1}}\sim\|[\mathcal H,\mathbf F(t)]\|_{\mathfrak S_1}\sim\|C(t)\|_{\mathfrak S_1}.
\end{align}
By \eqref{FtC0} and \cite[Proposition 2.39]{bahouri2011fourier},
\begin{align}\label{FtLinftyB}
\|\mathbf F(t)\|_{L^\infty}\lesssim\|\mathbf F(t)\|_{\dot B^1_{1,1}}.
\end{align}
Combining \eqref{CS2H}, \eqref{tail trace norm estimate}, \eqref{FtBS1}, and \eqref{FtLinftyB} gives
$$
\|\mathbf F(t)\|_{L^\infty}\lesssim\sqrt N\,\|\mathbf F(t)\|_{\dot H^{1/2}}+\sum_{n>N}a_n.
$$
First let $t\to\pm\infty$ using Proposition \ref{infinitysolitons general}, then let $N\to\infty$. This proves Proposition \ref{infinitysolitonsLinfty general}. Taking $d=2$, $k=1$ gives Theorem \ref{infinitysolitonsLinfty}.

\section{Proof of Theorem \ref{finite to infinite}: Examples of the admissible non-rational data}\label{p of 3}
In this section, we set $\mathcal V=\C^2$.
\begin{lemma}\label{one pole extension}
Let $\mathbf e_3=(0,0,1)^\top$ and
$$
\uu_{*,0}=\mathbf e_3+\mathbf v_{*,0}\in\mathcal{R}at(\R;\Ss^2),\enspace \Ub_{*,0}=\uu_{*,0}\cdot\bm\sigma.
$$
Suppose that $\rank(K_{\Ub_{*,0}})=N\geq1$ and all positive eigenvalues of $K_{\Ub_{*,0}}$ are simple. For every $m\in\N^*$ and $\varepsilon>0$, there exists
$$
\widetilde\uu_{*,0}=\mathbf e_3+\widetilde{\mathbf v}_{*,0}\in\mathcal{R}at(\R;\Ss^2),\enspace \widetilde\Ub_{*,0}=\widetilde\uu_{*,0}\cdot\bm\sigma,
$$
such that $K_{\widetilde\Ub_{*,0}}$ has rank $N+1$, all its positive eigenvalues are simple, and
$$
 \|\widetilde{\mathbf v}_{*,0}-\mathbf v_{*,0}\|_{H^m}+\|\widetilde\Ub_{*,0}-\Ub_{*,0}\|_{L^\infty}+\|H_{\widetilde\Ub_{*,0}}-H_{\Ub_{*,0}}\|_{\mathfrak S_1}<\varepsilon.
$$
\end{lemma}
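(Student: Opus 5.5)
The plan is to add one far-away, very narrow-in-amplitude soliton-like bump to $\uu_{*,0}$ by a rational construction that stays on $\Ss^2$, and then check that $K$ gains exactly one small simple eigenvalue. Concretely, I would work with the Grassmannian/projection picture used in Lemma \ref{rational fixed background}. Write $P_*=(\mathbbm 1_2-\Ub_{*,0})/2$ as a rank-one projector $P_*=g g^*/|g|^2$ with $g$ a $\C^2$-valued rational map (for $\Ss^2$-valued rational data one can take $g=(p_1,p_2)^\top$ polynomial, the classical rational parametrization of $\Ss^2\cong\mathbb{CP}^1$). The rank of $K_{\Ub}$ equals the number of poles in $\C_-$ counted with multiplicity, i.e.\ the degree of the associated finite Blaschke-type product, by the Kronecker theorem recalled in the introduction. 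I would then multiply the second component by a one-pole Blaschke factor with a small amplitude: $\widetilde g=(p_1,\,p_2+\kappa\,c\,\prod\ldots)$, or more simply compose with a Möbius-type twist $\widetilde g(x)=\bigl(p_1(x),\,p_2(x)\,\tfrac{x-a+\ii\rho}{x-a-\ii\rho}\bigr)$-deformation, with parameters chosen so that the extra pole is at $a-\ii\rho$ (or $a+\ii\rho$) with $\rho\to0$ or $a\to\infty$ and the new factor tending to $1$.

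The key steps are as follows. First, verify $\widetilde\uu_{*,0}$ is rational, $\Ss^2$-valued, with limit $\mathbf e_3$ at infinity; this holds because the projector construction automatically lands in $\Gr_1(\C^2)$, and I would fix the factor to be $1$ at $x=\infty$. Second, show that the rank of $K_{\widetilde\Ub_{*,0}}$ is $N+1$; here one uses that $\ran H^*_{\Ub}$ is spanned by the Cauchy kernels at the poles in $\C_+$ of the $L^2_-$ part, so one new generic simple pole adds exactly one dimension, and genericity ensures no cancellation. Third, obtain the closeness estimates. Taking the new factor of the form $1+\kappa\,\frac{\beta}{x-a-\ii\rho}$ with a fixed $\rho$ and amplitude $\kappa\to0$ (and then renormalizing to keep the sphere constraint via the projector formula), the map $\kappa\mapsto\widetilde\Ub_{*,0}$ is smooth into $H^m\cap L^\infty$, so the $H^m$ and $L^\infty$ differences are $O(\kappa)$. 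For the trace norm, $H_{\widetilde\Ub}-H_{\Ub}=H_{\widetilde\Ub-\Ub}$ has rank at most $2N+1$ (both are finite rank), so its $\mathfrak S_1$ norm is at most $\sqrt{2N+1}$ times its Hilbert--Schmidt norm, which by \cite[Lemma 4.1]{gerard2025global} is comparable to $\|\widetilde\Ub-\Ub\|_{\dot H^{1/2}}=O(\kappa)$.

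Fourth, and this is the main obstacle, I need simplicity of all $N+1$ positive eigenvalues of $K_{\widetilde\Ub}$. For small $\kappa$, the eigenvalues of $K$ depend continuously on $\kappa$ (finite-rank perturbation of size $O(\kappa)$ in $\mathfrak S_1$). The $N$ old simple eigenvalues $\lambda_1>\dots>\lambda_N>0$ therefore persist as simple eigenvalues near their old positions. The new eigenvalue tends to $0$ as $\kappa\to0$, so for small $\kappa$ it is separated from the others, provided it is nonzero, which is exactly the rank statement of step two. Thus simplicity reduces to establishing rank $N+1$ for all small $\kappa\neq0$. The delicate point is showing genericity of the new pole, i.e.\ that it does not cancel against a zero of the renormalized symbol. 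I would first try choosing $a$ away from all existing poles and zeros of $p_1,p_2$ and arguing that the resulting determinant condition is a nontrivial analytic function of $\kappa$, so that it fails only on a discrete set. If that approach is too messy, the fallback is to place the new pole at $a$ with $|a|$ huge: the interaction with the old part is then $O(1/|a|)$, and the spectrum of $K$ approximately splits into the old spectrum together with the single eigenvalue of an isolated one-soliton, which is explicitly nonzero and simple.
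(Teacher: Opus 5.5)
Your outline follows the route the paper takes. You write $\uu_{*,0}$ through a coprime polynomial pair in the $\mathbb{CP}^1$ (projector) parametrization. You add one pole by a M\"obius-type change of the ratio of the two components: multiplicatively by $1+\frac{\kappa\beta}{x-a-\ii\rho}$, where the paper does it additively via $\frac{P}{Q}+\frac{\eta}{X-\zeta}$. You get $H^m$ and $L^\infty$ closeness from smooth dependence on the parameter, with a uniform lower bound on the denominator. You get $\mathfrak S_1$ closeness from the rank bound $2N+1$; your $\mathfrak S_2\sim\dot H^{1/2}$ route and the paper's $\|H_{\widetilde\Ub-\Ub}\|_{\mathrm{op}}\le\|\widetilde\Ub-\Ub\|_{L^\infty}$ route both work. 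Simplicity then follows from the min-max principle once the rank is exactly $N+1$. That rank count is the one step you have not justified.

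The Kronecker theorem recalled in the introduction only says the data are rational if and only if $K_\Ub$ has finite rank; it gives no exact count. What you need is the statement the paper takes from \cite[Theorem 8.1]{gerard2025global}: with background $\mathbf e_3$, $\rank K_\Ub=N$ exactly when $\uu=\mathscr S(P,Q)$ with $P,Q$ coprime, $\deg P=N$ and $\deg Q\le N-1$. Your ``number of poles in $\C_-$ counted with multiplicity'' is correct for such pairs, but only as a consequence of this theorem.

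With it, your ``genericity'' worry is just coprimality of the new pair after clearing denominators. In your notation, with $c=a+\ii\rho$, the new pair is $\bigl((x-c)p_1,(x-c+\kappa\beta)p_2\bigr)$. Both degrees rise by exactly one. A common root would force $p_2(c)=0$ or $p_1(c-\kappa\beta)=0$, which you exclude by taking $c\notin\R$ off the zeros of $p_1p_2$ and $\kappa\neq0$ small. No analytic-in-$\kappa$ argument is needed. The paper's check ($P_\eta(\zeta)=\eta Q(\zeta)\neq0$, and $P_\eta(\alpha)=(\alpha-\zeta)P(\alpha)\neq0$ at zeros of $Q$) is the same one-line verification.

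Your far-away fallback cannot replace this. An approximate splitting of the spectrum only locates $N+1$ eigenvalues approximately; it cannot exclude further small positive eigenvalues, so it gives no upper bound $\rank K\le N+1$.

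Also discard the $\rho\to0$ Blaschke twist from your first paragraph. The factor $b(x)=\frac{x-a+\ii\rho}{x-a-\ii\rho}$ satisfies $b(a)=-1$ for every $\rho$. At fixed $a$ that perturbation therefore does not tend to zero in $L^\infty$, and hence not in $H^m$.
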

\begin{proof}
By \cite[Theorem 8.1]{gerard2025global}, there are coprime complex polynomials $P,Q$ with $\deg P=N$ and $\deg Q\leq N-1$ such that
$$
 \uu_{*,0}=\mathscr S(P,Q):=\left(\frac{2\operatorname{Re}(P\overline Q)}{|P|^2+|Q|^2},\frac{2\operatorname{Im}(P\overline Q)}{|P|^2+|Q|^2},\frac{|P|^2-|Q|^2}{|P|^2+|Q|^2}\right)^\top.
$$
Choose $\zeta\in\C\setminus\R$ with $Q(\zeta)\neq0$. For $\eta\neq0$, set
$$
 P_\eta=(X-\zeta)P+\eta Q,\enspace Q_\eta=(X-\zeta)Q,\enspace \frac{P_\eta}{Q_\eta}=\frac P Q+\frac\eta{X-\zeta}.
$$
Since $P_\eta(\zeta)=\eta Q(\zeta)\neq0$ and $P_\eta(\alpha)=(\alpha-\zeta)P(\alpha)\neq0$ at every zero $\alpha$ of $Q$, the polynomials $P_\eta,Q_\eta$ are coprime. Their degrees are $N+1$ and at most $N$, so \cite[Theorem 8.1]{gerard2025global} gives
$$
 \uu_\eta:=\mathscr S(P_\eta,Q_\eta)\in\mathcal{R}at(\R;\Ss^2),\enspace \rank(K_{\Ub_\eta})=N+1.
$$
For real $x$, $|x-\zeta|\geq c_\zeta:=|\operatorname{Im}\zeta|$ and $|P(x)|^2+|Q(x)|^2>0$. Set $D_\eta=|P_\eta|^2+|Q_\eta|^2$. Then $D_0(x)\langle x\rangle^{-2N-2}$ has a positive infimum, and
\begin{align*}
 D_\eta-D_0&=2\operatorname{Re}\bigl(\eta Q\overline{(X-\zeta)P}\bigr)+|\eta|^2|Q|^2,\\
 |D_\eta-D_0|&\leq\bigl(|\eta|/c_\zeta+|\eta|^2/c_\zeta^2\bigr)D_0.
\end{align*}
Hence $D_\eta\geq D_0/2$ uniformly in $x$ for small $|\eta|$. Differentiating the rational expressions gives, for every $m\in\N^*$,
$$
 \max_{0\leq k\leq m}|\partial_x^k(\uu_\eta-\uu_{*,0})(x)|\leq C_m|\eta|\langle x\rangle^{-1}.
$$
Thus $\|\uu_\eta-\uu_{*,0}\|_{H^m}+\|\Ub_\eta-\Ub_{*,0}\|_{L^\infty}\to0$. Since
$$
 \rank(H_{\Ub_\eta}-H_{\Ub_{*,0}})\leq\rank H_{\Ub_\eta}+\rank H_{\Ub_{*,0}}=2N+1,
$$
we also have $\|H_{\Ub_\eta}-H_{\Ub_{*,0}}\|_{\mathfrak S_1}\leq(2N+1)\|\Ub_\eta-\Ub_{*,0}\|_{L^\infty}\to0$.

Let $\mu_j(K)$ be the decreasing eigenvalues of a positive compact operator, counted with multiplicity and completed by zeros. Set
\begin{align*}
 \Gamma&:=\min_{1\leq j\leq N}\bigl(\mu_j(K_{\Ub_{*,0}})-\mu_{j+1}(K_{\Ub_{*,0}})\bigr)>0,\\
 d_\eta&:=\|K_{\Ub_\eta}-K_{\Ub_{*,0}}\|_{\mathrm{op}}\leq2\|H_{\Ub_\eta}-H_{\Ub_{*,0}}\|_{\mathrm{op}}\to0.
\end{align*}
The min-max principle gives $|\mu_j(K_{\Ub_\eta})-\mu_j(K_{\Ub_{*,0}})|\leq d_\eta$ for $j\geq1$. For $d_\eta<\Gamma/4$,
$$
 \mu_j(K_{\Ub_\eta})-\mu_{j+1}(K_{\Ub_\eta})\geq\Gamma-2d_\eta>0,\enspace 1\leq j\leq N.
$$
Positivity and rank $N+1$ give $\mu_{N+1}(K_{\Ub_\eta})>0$, proving simplicity.
\end{proof}
\begin{lemma}\label{finite stability}
Let $m\geq2$ be an integer, $T>0$, and $\uu_*=\mathbf e_3+\mathbf v_*$ the global solution of \eqref{HWMv} with rational initial data $\uu_{*,0}=\mathbf e_3+\mathbf v_{*,0}$. For every $\varepsilon>0$, there is $\delta>0$ such that the global rational solution $\widetilde\uu_*=\mathbf e_3+\widetilde{\mathbf v}_*$ with initial data
$$
 \widetilde\uu_{*,0}=\mathbf e_3+\widetilde{\mathbf v}_{*,0}\in\mathcal{R}at(\R;\Ss^2),\enspace \|\widetilde{\mathbf v}_{*,0}-\mathbf v_{*,0}\|_{H^m}<\delta
$$
satisfies $\sup_{|t|\leq T}\|\widetilde\uu_*(t)-\uu_*(t)\|_{H^m}<\varepsilon$.
\end{lemma}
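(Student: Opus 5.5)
We would prove Lemma \ref{finite stability} by a standard persistence-of-regularity argument for the classical $H^m$ flow, combined with the global a priori bounds of Proposition \ref{ylczgzfj}. The local theory of Proposition \ref{local explicit} (applied with $d=2$, $k=1$ and $s=m\ge2>3/2$) provides, for each $R>0$, a time $T(R)>0$ such that data with $\|\mathbf V_0\|_{H^m}<R$ generate unique solutions on $[0,T(R)]$. We would first upgrade this to Lipschitz continuity of the data-to-solution map on $[0,T(R)]$ in $H^m$. More precisely, we would aim for continuous dependence in $H^m$ uniformly on bounded sets, with a modulus depending only on $R$.

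The main steps run as follows. First, Proposition \ref{ylczgzfj} (or Proposition \ref{qjsdx} together with conservation of $L^2$) gives $M:=\sup_{t\in\R}\|\mathbf v_*(t)\|_{H^m}<\infty$. Set $R=2M+1$ and $\tau=T(R)$. Second, we would establish a local stability estimate. If two solutions $\mathbf v,\widetilde{\mathbf v}$ on $[t_0,t_0+\tau]$ have $H^m$ norms at most $R$, then an energy estimate for the difference $\mathbf w=\widetilde{\mathbf v}-\mathbf v$ in $H^{m-1}$ would give
\begin{align*}
\sup_{|t-t_0|\le\tau}\|\mathbf w(t)\|_{H^{m-1}}\le C(R)\|\mathbf w(t_0)\|_{H^{m-1}}.
\end{align*}
This would use the commutator structure $\partial_t\Ub=-\frac{\ii}{2}[\Ub,|D|\Ub]$, in which the top-order term is skew and Kato--Ponce commutator bounds control the rest. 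Third, we would upgrade to continuity in $H^m$ itself by a Bona--Smith argument: regularize the data, use the $H^{m+1}$ bounds of the regularized solutions on $[0,\tau]$, and interpolate. Alternatively, one could invoke directly that the local well-posedness of \cite{gerard2025global} includes continuity of the flow map in $H^m$ on bounded sets. Fourth, we would iterate over the intervals $[j\tau,(j+1)\tau]$ covering $[-T,T]$. Choosing $\delta$ small enough inductively keeps $\|\widetilde{\mathbf v}_*(j\tau)\|_{H^m}\le M+1<R$ at each restart, so the local time $\tau$ never shrinks. After finitely many steps, about $T/\tau$ of them, this gives $\sup_{|t|\le T}\|\widetilde{\mathbf v}_*(t)-\mathbf v_*(t)\|_{H^m}<\varepsilon$. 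Negative times are handled by the symmetry $\Ub(t)\mapsto-\Ub(-t)$ used in Lemma \ref{PiVlim}.

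The main obstacle is the third step. The $H^m$ continuity of the flow is not Lipschitz, so the constant in the iteration depends on the modulus of continuity rather than on a linear bound. What matters is that this modulus is uniform over data in the ball of radius $R$ centered at $\mathbf v_*(j\tau)$. Near the fixed, compact family $\{\mathbf v_*(t):|t|\le T\}$ this uniformity does hold. Indeed, the Bona--Smith argument gives continuity at each point, and compactness of the reference trajectory in $H^m$ makes the choice of $\delta$ uniform along the finitely many restart times. We would first try to quote continuous dependence from the local theory of \cite{gerard2025global}, and we would fall back on the Bona--Smith route only if that citation does not cover $H^m$ continuity.
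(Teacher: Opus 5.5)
Your proposal is correct and follows essentially the same route as the paper. Both arguments bound the reference trajectory on $[-T,T]$, fix the uniform local time $\tau=T(R)$ from Proposition~\ref{local explicit}, propagate small $H^m$ differences across finitely many subintervals by continuous dependence of the local flow, and handle negative times via $\Ub(t)\mapsto-\Ub(-t)$; the paper simply invokes continuous dependence rather than rederiving it by a Bona--Smith argument.

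Two small corrections. First, Proposition~\ref{ylczgzfj} requires a simple-spectrum hypothesis that is not assumed here, so take $M=\sup_{|t|\le T}\|\mathbf v_*(t)\|_{H^m}$ instead; it is finite by the time continuity in Proposition~\ref{qjsdx}. Second, the iteration only needs continuity of the flow at the finitely many points $\mathbf v_*(t_j)$, so no uniform modulus of continuity over balls is required.
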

\begin{proof}
Continuity gives $M:=\sup_{|t|\leq T}\|\mathbf v_*(t)\|_{H^m}<\infty$. Choose $R>M+1$ and let $\tau=T(R)>0$ be the uniform local existence time in Proposition \ref{local explicit}. Subdivide $[0,T]$ at $0=t_0<\cdots<t_N=T$ with $t_{j+1}-t_j<\tau$. Continuous dependence at each $\uu_*(t_j)$ propagates sufficiently small $H^m$ differences throughout $[t_j,t_{j+1}]$. Iterating over this finite partition gives $\delta_+>0$ such that
$$
 \|\widetilde{\mathbf v}_{*,0}-\mathbf v_{*,0}\|_{H^m}<\delta_+\ \Longrightarrow\ \sup_{0\leq t\leq T}\|\widetilde\uu_*(t)-\uu_*(t)\|_{H^m}<\varepsilon.
$$
The maps $\widetilde\uu^-_*(t)=-\widetilde\uu_*(-t)$ and $\uu^-_*(t)=-\uu_*(-t)$ are global solutions with rational initial data $-\widetilde\uu_{*,0}$ and $-\uu_{*,0}$. The same argument yields $\delta_->0$ such that
$$
 \|\widetilde{\mathbf v}_{*,0}-\mathbf v_{*,0}\|_{H^m}<\delta_-\ \Longrightarrow\ \sup_{-T\leq t\leq0}\|\widetilde\uu_*(t)-\uu_*(t)\|_{H^m}<\varepsilon.
$$
Take $\delta=\min\{\delta_+,\delta_-\}$.
\end{proof}
\begin{proof}[Proof of Theorem \ref{finite to infinite}]
Rotation invariance of the equation, Sobolev distances, and operator quantities reduces the proof to $\uu_\infty=\mathbf e_3$. In the constant case $\uu_{*,0}\equiv\mathbf e_3$, take $P=X-\ii$ and $Q=a>0$ in the parametrization used in the proof of Lemma \ref{one pole extension}. This gives rational maps $\uu_a$ with $\rank(K_{\uu_a\cdot\bm\sigma})=1$ and $\|\uu_a-\mathbf e_3\|_{H^m}\to0$ as $a\to0$. Choose $a$ so that $\|\uu_a-\mathbf e_3\|_{H^m}<\varepsilon/2$, and apply the construction below to $\uu_a$ with $\varepsilon/2$ in place of $\varepsilon$. The triangle inequality then gives the required approximation to $\mathbf e_3$. We may therefore assume that $\uu_{*,0}$ is nonconstant.

Set
$$
 \uu_0^{(1)}=\uu_{*,0},\enspace \mathbf v_0^{(1)}=\mathbf v_{*,0},\enspace \Ub_0^{(1)}=\Ub_{*,0},\enspace N=\rank(K_{\Ub_0^{(1)}}).
$$
Suppose that $\uu_0^{(n)}=\mathbf e_3+\mathbf v_0^{(n)}$ has been chosen with $\Ub_0^{(n)}=\uu_0^{(n)}\cdot\bm\sigma$ and $K_n:=K_{\Ub_0^{(n)}}$ of rank $N+n-1$, with simple positive eigenvalues $\mu_1^{(n)}>\cdots>\mu_{N+n-1}^{(n)}>0$. Set
$$
 \mu_{N+n}^{(n)}=0,\enspace \Gamma_n:=\min_{1\leq j\leq N+n-1}(\mu_j^{(n)}-\mu_{j+1}^{(n)})>0.
$$
Let $\uu^{(n)}=\mathbf e_3+\mathbf v^{(n)}$ be its global rational solution. Lemma \ref{finite stability} gives $\delta_n>0$ such that every rational perturbation satisfying
$$
 \widetilde\uu_0^{(n)}=\mathbf e_3+\widetilde{\mathbf v}_0^{(n)},\enspace \|\widetilde{\mathbf v}_0^{(n)}-\mathbf v_0^{(n)}\|_{H^{m+n+2}}<\delta_n
$$
has a solution obeying $\sup_{|t|\leq n}\|\widetilde\uu^{(n)}(t)-\uu^{(n)}(t)\|_{H^{m+n+2}}<\varepsilon2^{-n-2}$.
By Lemma \ref{one pole extension}, choose $\uu_0^{(n+1)}$ so that $\rank K_{n+1}=N+n$, its positive eigenvalues are simple, and
\begin{align*}
 &\|\mathbf v_0^{(n+1)}-\mathbf v_0^{(n)}\|_{H^{m+n+2}}+\|H_{\Ub_0^{(n+1)}}-H_{\Ub_0^{(n)}}\|_{\mathfrak S_1}<\min\{\varepsilon2^{-n-2},\delta_n\},\\
 &\|K_{n+1}-K_n\|_{\mathrm{op}}<2^{-n-5}\min_{1\leq r\leq n}\Gamma_r.
\end{align*}
Then
$$
 \sup_{|t|\leq n}\|\mathbf v^{(n+1)}(t)-\mathbf v^{(n)}(t)\|_{H^{m+n+2}}<\varepsilon2^{-n-2}.
$$
The series $\sum_{n\geq1}(\mathbf v_0^{(n+1)}-\mathbf v_0^{(n)})$ converges in every $H^k(\R;\R^3)$, $k\in\N^*$, giving $\uu_0=\mathbf e_3+\mathbf v_0\in H^\infty_\bullet(\R;\Ss^2)$; uniform convergence preserves the sphere constraint. Moreover,
$$
 \|\mathbf v_0-\mathbf v_{*,0}\|_{H^m}\leq\sum_{n=1}^\infty\|\mathbf v_0^{(n+1)}-\mathbf v_0^{(n)}\|_{H^m}<\varepsilon.
$$
The Hankel operators are Cauchy in $\mathfrak S_1$, and uniform convergence of the symbols gives their operator norm limit $H_{\Ub_0}$, where $\Ub_0=\uu_0\cdot\bm\sigma$. Hence $H_{\Ub_0^{(n)}}\to H_{\Ub_0}$ in $\mathfrak S_1$ and $H_{\Ub_0}\in\mathfrak S_1$.

For $K_0:=K_{\Ub_0}$,
$$
 d_n:=\|K_0-K_n\|_{\mathrm{op}}\leq\sum_{r=n}^\infty\|K_{r+1}-K_r\|_{\mathrm{op}}<\Gamma_n\sum_{r=n}^\infty2^{-r-5}<\Gamma_n/16.
$$
The min-max principle gives $|\mu_j(K_0)-\mu_j(K_n)|\leq d_n$ for $j\geq1$, so
$$
 \mu_j(K_0)-\mu_{j+1}(K_0)\geq\mu_j(K_n)-\mu_{j+1}(K_n)-2d_n>\tfrac78\Gamma_n>0,\enspace 1\leq j\leq N+n-1.
$$
Since $n$ is arbitrary, $K_0$ has infinite rank and simple positive eigenvalues $(\lambda_n)_{n\geq1}$. The identity $K_0=H_{\Ub_0}^*H_{\Ub_0}$ and $H_{\Ub_0}\in\mathfrak S_1$ yield $\sum_{n\geq1}\sqrt{\lambda_n}<\infty$.

The series $\sum_{n\geq1}(\mathbf v^{(n+1)}-\mathbf v^{(n)})$ converges in every $C([-T,T];H^k(\R;\R^3))$, $T>0$, $k\in\N^*$, giving
$$
 \uu=\mathbf e_3+\mathbf v\in C(\R;H^\infty_\bullet(\R;\Ss^2)),\enspace \uu(0)=\uu_0.
$$
Here uniform convergence again preserves the sphere constraint. Set $\Ub^{(n)}=\uu^{(n)}\cdot\bm\sigma=\sigma_3+\mathbf V^{(n)}$ and $\Ub=\uu\cdot\bm\sigma=\sigma_3+\mathbf V$. The rational solutions satisfy
\begin{align}\label{rational integral equation}
 \mathbf V^{(n)}(t)=\mathbf V_0^{(n)}-\frac{\ii}{2}\int_0^t\left[\sigma_3+\mathbf V^{(n)}(\tau),|D|\mathbf V^{(n)}(\tau)\right]\,\mathrm d\tau.
\end{align}
For $T>0$ and $k\geq2$, the Sobolev product estimate gives
\begin{align*}
 &\sup_{|t|\leq T}\|[\sigma_3+\mathbf V^{(n)},|D|\mathbf V^{(n)}]-[\sigma_3+\mathbf V,|D|\mathbf V]\|_{H^{k-1}}\\
 &\hspace{2em}\leq C_{T,k}\sup_{|t|\leq T}\|\mathbf V^{(n)}-\mathbf V\|_{H^k}\to0.
\end{align*}
Passing to the limit in \eqref{rational integral equation} yields $\partial_t\Ub=-\frac{\ii}{2}[\Ub,|D|\Ub]$, and hence \eqref{HWMv}.
\end{proof}

\section*{Acknowledgments}\label{s:10} 	
    This work was supported by the National Natural Science Foundation of China under Grant No. 12371255 and the Fundamental Research Funds for the Central Universities of CUMT under Grant No. 2024ZDPYJQ1003.\\

  \textbf{Data availibility}: No data was used for the research described in the article.\\

  \textbf{Declarations}\\

\textbf{Conflict of interest}: The authors declare no conflict of interest.

\bibliographystyle{plain}

\end{document}